\definecolor{labelkey}{rgb}{0,0.08,0.45}
\definecolor{refkey}{rgb}{0,0.6,0.0}
\definecolor{Brown}{rgb}{0.45,0.0,0.05}
\definecolor{lime}{rgb}{0.00,0.8,0.0}
\definecolor{lblue}{rgb}{0.5,0.5,0.99}
\definecolor{lblue}{rgb}{0.8,0.85,1.00}
\definecolor{anotherblue}{rgb}{.8, .8,1}
\definecolor{violet}{rgb}{0.9,0.6,0.9}
\definecolor{greenyellow}{rgb}{0.53,0.99,0.18}
\definecolor{Lyellow}{rgb}{0.87,0.87,0.87}
\definecolor{Lgray}{rgb}{0.92,0.92,0.92}
\definecolor{Mgray}{rgb}{0.5,0.5,0.5}
\definecolor{Gold}{rgb}{0.99,0.84,0.0}
\newcommand{\nnn}{\ensuremath{{n\in{\mathbb N}}}}
\newcommand{\menge}[2]{\big\{{#1}~\big |~{#2}\big\}}
\newcommand{\fenv}[1]%
{\ensuremath{\,\overrightarrow{\operatorname{env}}_{#1}}}
\newcommand{\benv}[1]%
{\ensuremath{\,\overleftarrow{\operatorname{env}}_{#1}}}
\newcommand{\RR}{\ensuremath{\mathbb R}}
\def\la{\langle}
\newcommand{\CC}{\ensuremath{\mathbb C}}
\newcommand{\NN}{\ensuremath{\mathbb N}}
\newcommand{\lm}{\lambda}
\newcommand{\ran}{\ensuremath{\operatorname{ran}}}
\newcommand{\Fix}{\ensuremath{\operatorname{Fix}}}
\newcommand{\Id}{\ensuremath{\operatorname{Id}}}
\def\disp{\displaystyle}
\def\ve{\varepsilon}
\def\gg{\gamma}
\def\ra{\rangle}
\def\la{\langle}
\def\mcX{\mathcal{X}}
\newtheorem{theorem}{Theorem}[section]
\newtheorem{lemma}[theorem]{Lemma}
\newtheorem{corollary}[theorem]{Corollary}
\newtheorem{proposition}[theorem]{Proposition}
\newtheorem{definition}[theorem]{Definition}
\theoremstyle{plain}{\theorembodyfont{\rmfamily}
}
\theoremstyle{plain}{\theorembodyfont{\rmfamily}
}
\theoremstyle{plain}{\theorembodyfont{\rmfamily}
}
\theoremstyle{plain}{\theorembodyfont{\rmfamily}
\newtheorem{example}[theorem]{Example}}
\newtheorem{fact}[theorem]{Fact}
\theoremstyle{plain}{\theorembodyfont{\rmfamily}
\newtheorem{remark}[theorem]{Remark}}
\def\endproof{\ensuremath{\hfill \quad \blacksquare}}
\def\doi{DOI}
\newcounter{count}
\begin{document}

\title{\textrm{Optimal rates of convergence of
matrices with applications}}

\author{
Heinz H.\ Bauschke\thanks{Mathematics, University of British
Columbia, Kelowna, B.C.\ V1V~1V7, Canada. E-mail:
\texttt{heinz.bauschke@ubc.ca}.},~
J.Y.\ Bello Cruz\thanks{IME, Federal University of Goias,
Goiania, G.O. 74001-970, Brazil. E-mail:
\texttt{yunier.bello@gmail.com}.},~Tran T.A.\ Nghia\thanks{Mathematics, University of British Columbia, Kelowna, B.C.\ V1V~1V7, Canada. E-mail: \texttt{nghia.tran@ubc.ca}.},~
Hung M.\ Phan\thanks{Mathematics, University of British Columbia, Kelowna, B.C.\ V1V~1V7, Canada. E-mail:  \texttt{hung.phan@ubc.ca}.}, ~and
Xianfu\ Wang\thanks{Mathematics, University of British Columbia,
Kelowna, B.C.\ V1V~1V7, Canada. E-mail:
\texttt{shawn.wang@ubc.ca}.}}

\date{\today}

\maketitle \thispagestyle{fancy}

\vskip 8mm

\begin{abstract} \noindent
We present a systematic study  on the linear convergence rates of
the powers of (real or complex) matrices. We derive a characterization
when the optimal convergence rate is attained. This
characterization is given in terms of semi-simpleness of all
eigenvalues having the second-largest modulus after 1.
We also provide applications of our general results to analyze
the optimal convergence rates for several relaxed alternating projection
 methods and the generalized Douglas-Rachford splitting methods for
 finding the projection on the intersection of two subspaces. Numerical experiments
 confirm our convergence analysis.
\end{abstract}

{\small \noindent {\bfseries 2010 Mathematics Subject
Classification:} {Primary 65F10, 65F15;
Secondary 65B05, 15A18
}
}

\noindent {\bfseries Keywords:}
Convergent and semi-convergent matrix, Friedrichs angle, generalized Douglas-Rachford method, linear convergence, principal angle,
relaxed alternating projection method.


\section{Introduction}
The focus of this paper is the study of the convergence rate of the powers
of a real or complex matrix $A$.  Necessary and sufficient conditions
for such convergence rates were first established by Hensel \cite{Hen}
and later by Oldenburger \cite{Old}. The
convergence rate plays a central role in many well-known
algorithms for solving linear systems such as  \emph{Jacobi,
Gauss-Seidel, successive over-relaxation} methods; see, e.g.,
\cite{Me-Pl,Saad}. Furthermore,  the convergence of the power $A^k$
is linear and the rate is \emph{dominated} by the second-largest
absolute eigenvalue of $A$, $\gg(A)$, which relates to the
\emph{subdominant or controlling eigenvalue} \cite{K,NN}. Natural
questions thus arising are ``What is the optimal (smallest) convergence
rate?'' and ``When is $\gg(A)$ the
optimal convergence rate?''. In general, the optimal convergence rate does not
exist (see Example~\ref{ex:lm} below). However, many iterative
linear methods such as  the {\em method of alternating
projections} (also known as von Neumann's method) \cite{bb96,Maratea} and the
\emph{Douglas-Rachford splitting algorithm} \cite{DR,EB,HLN,LM} do
obtain the optimal linear rates of convergence; see also \cite{BCNPW,DZ,KW}.
We are thus investigating in which case the convergence of the powers
$A^k$ admits the optimal linear rate. We will provide  complete
answers for aforementioned questions in Theorem~\ref{t:linearII}
and Theorem~\ref{t:main}. Furthermore, we then are in a position
to analyze convergence rates of relaxed
alternating projection and generalized
Doughlas-Rachford algorithms for subspaces.

The rest of the paper is organized as follows. In Section~2 we
systematically study convergence rates of matrices.
The main result in this section is Theorem~\ref{t:main}, which gives
a necessary and sufficient condition for the powers $A^k$ to converge
linearly  with the optimal rate $\gg(A)$ via the semi-simpleness of
all the eigenvalues having the second-largest absolute values among
the spectrum. Section~3 is devoted to the applications of Section~2
to the relaxed alternating methods and also the generalized
Douglas-Rachford splitting methods.
In Section~4 we
introduce and study a nonlinear map that helps to accelerate the
convergence of the alternating projection method.
In Section~\ref{s:numerical}, we present some numerical results to illustrate our convergence
theory developed in earlier sections. Finally, we present our conclusions in Section~\ref{s:conclusion}.

{\bf Notation.}  Throughout, we denote by $\CC^{n\times n}$ and
$\RR^{n\times n}$ the sets of $n\times n$ complex matrices and real
matrices, respectively. Let $A$ be a matrix in $\CC^{n\times n}$
(or $\RR^{n\times n}$). The notation $A^*$ stands for the adjoint
(complex transposed)
matrix of $A$.  The matrix norm used in this paper is the \emph{operator
norm}, i.e., $\|A\|=\max\{\|Ax\||\; x\in \CC^n, \|x\|\le 1\}$. We
write $\ker A$, $\ran A$, and ${\rm rank}\, A$ as the kernel,  range,
rank of $A$, respectively. Moreover, $\Fix A:=\ker(A-\Id)$ is known
as the set of fixed points of $A$, where $\Id$ is the identity
mapping. We say $A$ is \emph{nonexpansive} if $\|Ax\|\le \|x\|$ for
all $x\in \CC^{n}$; furthermore, $A$ is \emph{firmly nonexpansive}
if  $\|Ax\|^2+\|x-Ax\|^2\le \|x\|^2$ for all $x\in \CC^{n}$. For
any subspace $U$ of $\RR^n$, the notation $P_U$ is referred to the
orthogonal \emph{projection operator} to $U$, $\dim U$ for the
dimension of $U$, and $U^{\perp}$ for the orthogonal complement of
$U$. We denote $I_n, 0_n, 0_{m\times n}$ by the $n\times n$ identity
matrix, the $n\times n$ zero matrix, and the $m\times n$ zero matrix,
respectively.

\section{The optimal convergence rate of matrices}
In this section we establish conditions under which convergent matrices attain their optimal convergent rate.
Let us recall some definitions and facts used in the sequel.

\begin{definition}[convergent matrices] \label{d:CM}Let $A\in
\CC^{n\times n}$. We say $A$ is \emph{convergent}\footnote{{In
the literature, $A$ is called convergent if the power $A^k$ converges to $0$; moreover, $A$ is semi-convergent whenever the latter limit $A^k$ exists. To avoid the confusion of these two terminologies, we just say $A$ is convergent in both cases.}} to $A^\infty\in \CC^{n\times n}$ if and only if
\begin{eqnarray}\label{CO}
\|A^k-A^\infty \|\to 0\quad \mbox{as}\quad k\to \infty.
\end{eqnarray}
We say $A$ is \emph{linearly} convergent to $A^\infty$ with rate $\mu\in [0,1)$ if there are some $M, N>0$ such that
\begin{eqnarray}\label{Rate}
\|A^k-A^\infty \|\le M\mu^k\quad \mbox{for all}\quad  k>N, k\in \NN.
\end{eqnarray}
Then $\mu$ is called a \emph{convergence rate} of $A$. When the infimum
of all the convergence rates is also a convergence rate, we say
this minimum is the \emph{optimal convergence rate}.
\end{definition}

For any $A\in \CC^{n\times n}$ we denote by $\sigma(A)$ the \emph{spectrum} of $A$, the set of all eigenvalues. The spectral radius \cite[Example~7.1.4]{Meyer} of $A$ is defined by
\begin{eqnarray}\label{e:radi}
\rho(A):=\max\{|\lm||\; \lm\in \sigma(A)\}.
\end{eqnarray}
The next fact is the classical formula of spectral radius.
 \begin{fact}[spectral radius formula]\emph{(\cite[Example~7.10.1]{Meyer})}\label{spec-for} Let $A\in \CC^{n\times n}$. Then we have
\begin{eqnarray}\label{SRF}
\rho(A)=\lim_{k\to\infty}\|A^k\|^\frac{1}{k}.
\end{eqnarray}
\end{fact}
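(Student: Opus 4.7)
The plan is to prove the two inequalities $\rho(A) \le \liminf_{k\to\infty}\|A^k\|^{1/k}$ and $\limsup_{k\to\infty}\|A^k\|^{1/k}\le \rho(A)$ separately, which together force the limit in \eqref{SRF} to exist and equal $\rho(A)$.

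The lower bound is the easy direction. Pick any $\lambda\in\sigma(A)$ with $|\lambda|=\rho(A)$ and a unit eigenvector $v\in\CC^n$ so that $Av=\lambda v$. Then $A^kv=\lambda^k v$, hence
\begin{equation*}
\|A^k\|\ge \|A^kv\|=|\lambda|^k=\rho(A)^k,
\end{equation*}
and taking $k$-th roots yields $\|A^k\|^{1/k}\ge \rho(A)$ for every $k\in\NN$. Consequently $\liminf_{k\to\infty}\|A^k\|^{1/k}\ge \rho(A)$.

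For the upper bound I would use the resolvent/Neumann-series approach, which avoids a case split on Jordan structure. Fix any $z\in\CC$ with $|z|>\rho(A)$. Then $zI-A$ is invertible and I would justify the absolutely convergent Laurent expansion
\begin{equation*}
(zI-A)^{-1}=\sum_{k=0}^{\infty}\frac{A^k}{z^{k+1}},
\end{equation*}
valid for all such $z$ (convergence can be shown from the geometric series applied to $A/z$ once one knows the series converges in some annulus, which follows from holomorphy of $z\mapsto (zI-A)^{-1}$ on $\{|z|>\rho(A)\}$). Convergence of the series forces the general term to be bounded, so there exists $C=C(z)$ with $\|A^k\|\le C|z|^{k+1}$ for all $k$. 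Taking $k$-th roots and letting $k\to\infty$ gives $\limsup_{k\to\infty}\|A^k\|^{1/k}\le |z|$, and then letting $|z|\downarrow \rho(A)$ yields $\limsup_{k\to\infty}\|A^k\|^{1/k}\le \rho(A)$.

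The main obstacle is the upper bound: the lower bound is a one-line eigenvector argument, while the upper bound requires either a Jordan-form analysis (bounding $\|J^k\|$ for a nilpotent-plus-scalar Jordan block, which contributes only a polynomial-in-$k$ factor that is absorbed when taking $k$-th roots) or the complex-analytic fact that the resolvent $(zI-A)^{-1}$ is holomorphic outside the spectrum so that its Laurent series converges on $\{|z|>\rho(A)\}$. Either route is standard but is the non-trivial content of the formula. An alternative, purely algebraic, route is to observe that for any $\epsilon>0$ the matrix $B_\epsilon:=A/(\rho(A)+\epsilon)$ satisfies $\rho(B_\epsilon)<1$, then invoke a separate lemma that $\rho(B)<1$ implies $B^k\to 0$ (hence $\|B_\epsilon^k\|\le 1$ for $k$ large), which translates to $\|A^k\|^{1/k}\le \rho(A)+\epsilon$ eventually; one must however be careful to prove that lemma without circularity (e.g., via Jordan form).
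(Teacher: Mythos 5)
The paper does not actually prove this statement: it is labelled a \emph{Fact} and simply cited from Meyer \cite[Example~7.10.1]{Meyer}, so there is no in-paper argument to compare against. Your proof is correct and is one of the two standard routes. The lower bound via an eigenvector $v$ with $Av=\lambda v$, $|\lambda|=\rho(A)$, giving $\|A^k\|\ge\|A^kv\|=\rho(A)^k$ for every $k$, is airtight. For the upper bound, your resolvent argument is sound: the map $z\mapsto(zI-A)^{-1}$ is, for a matrix, a rational function with poles exactly at the eigenvalues, hence holomorphic on $\{|z|>\rho(A)\}$; its Laurent expansion at infinity therefore converges on that whole region, and by uniqueness (matching it against the Neumann series valid for $|z|>\|A\|$) it must be $\sum_{k\ge 0}A^k z^{-(k+1)}$. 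Boundedness of the terms of a convergent series then gives $\|A^k\|\le C|z|^{k+1}$, so $\limsup_k\|A^k\|^{1/k}\le|z|$ for every $|z|>\rho(A)$, and the claim follows. This is slightly more analysis-flavored than what a linear-algebra text such as Meyer typically presents (a Jordan-form computation bounding $\|J^k\|$ by a polynomial in $k$ times $\rho(A)^k$, the polynomial being washed out by the $k$-th root), but in the finite-dimensional setting it requires nothing beyond the rationality of the resolvent, so no Banach-algebra or spectral-theory overhead is incurred. Your closing remark about the alternative scaled-matrix route (reduce to $\rho(B)<1\Rightarrow B^k\to 0$, itself proved by Jordan form) and the need to avoid circularity there is also well taken.
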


With $\lm\in \sigma(A)$, recall from \cite[page 587]{Meyer} that  ${\rm index}\, (\lm)$ is the smallest positive integer $k$ satisfying ${\rm rank}\, (A-\lm \Id)^k={\rm rank}\, (A-\lm \Id)^{k+1}$. Furthermore, we say $\lm\in \sigma(A)$ is {\em semisimple} if ${\rm index}\, (\lm)=1$; see, e.g., \cite[Exercise~7.8.4]{Meyer}.

\begin{fact}\label{f:ss} For $A\in \CC^{n\times n}$, $\lm\in \sigma(A)$ is  semisimple if and only if $\ker (A-\lm \Id)=\ker (A-\lm \Id)^2$.
\end{fact}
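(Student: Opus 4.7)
The plan is to translate the stated rank condition into a kernel condition via the rank–nullity theorem, and then exploit the elementary inclusion $\ker(A-\lambda \Id) \subseteq \ker(A-\lambda \Id)^2$, which reduces an equality of dimensions to an equality of subspaces.

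Set $B := A - \lambda \Id$, so the definition of semisimpleness (index one) amounts to $\operatorname{rank} B = \operatorname{rank} B^2$. Applying the rank–nullity theorem to $B$ and to $B^2$, both viewed as endomorphisms of $\CC^n$, gives
\begin{equation*}
\dim \ker B = n - \operatorname{rank} B \quad \text{and} \quad \dim \ker B^2 = n - \operatorname{rank} B^2 .
\end{equation*}
Hence $\operatorname{rank} B = \operatorname{rank} B^2$ is equivalent to $\dim \ker B = \dim \ker B^2$.

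Next, observe that $\ker B \subseteq \ker B^2$: if $Bx = 0$ then $B^2 x = B(Bx) = 0$. Therefore the equality of the two finite dimensions is equivalent to $\ker B = \ker B^2$ itself. Chaining these equivalences delivers the fact in both directions.

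The proof is essentially a two-line argument, so I do not expect a real obstacle; the only subtlety is ensuring that the definition of $\operatorname{index}(\lambda)$ being one corresponds exactly to the case $k=1$ in the stabilization condition $\operatorname{rank} B^k = \operatorname{rank} B^{k+1}$, which is immediate from the definition quoted just before the statement.
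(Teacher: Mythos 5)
Your proof is correct and follows essentially the same route as the paper: convert the rank condition $\operatorname{rank}(A-\lambda\Id)=\operatorname{rank}(A-\lambda\Id)^2$ into an equality of kernel dimensions via rank--nullity, then upgrade the dimension equality to a subspace equality using the elementary inclusion $\ker(A-\lambda\Id)\subseteq\ker(A-\lambda\Id)^2$.
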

\begin{proof} Note that $\lm\in \sigma(A)$ is  semisimple if and only if
\[
\dim[\ker (A-\lm \Id)]=n-{\rm rank}\, (A-\lm \Id)=n-{\rm rank}\, (A-\lm \Id)^{2}=\dim [\ker (A-\lm \Id)^2].
\]
Since $\ker (A-\lm \Id)\subset \ker (A-\lm \Id)^2$, the equality $\dim[\ker (A-\lm \Id)]=\dim [\ker (A-\lm \Id)^2]$ holds if and only if $\ker (A-\lm \Id)= \ker (A-\lm \Id)^2$. This verifies the proof of the fact.
\end{proof}

The following result taken from \cite{Meyer} gives us a complete characterization of a convergent matrix.

\begin{fact}[limits of powers]\emph{(\cite[page 617-618 and page
630]{Meyer})}\label{t:limitp} For $A\in \CC^{n\times n}$, $A$ is
convergent to $A^\infty$ if and only if
\begin{eqnarray}\label{spec-con}
\label{less1}&&  \rho(A)<1,\; \text{or else}\\
\label{equal1}&&  \rho(A)=1 \;\mbox{ and }\; \lm=1\; \mbox{is semisimple and it is the only eigenvalue on the unit circle}.
\end{eqnarray}
When this happens, we have
\begin{equation}\label{Q}
\disp A^\infty=\text{ the projector onto $\ker(A-\Id)$ along $\ran (A-\Id)$}.
\end{equation}
In particular, when $\rho(A)<1$, we have $ A^\infty=0$.
\end{fact}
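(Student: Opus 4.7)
The plan is to pass to the Jordan canonical form of $A$, since both the convergence of $\{A^k\}$ and the limit $A^\infty$ (should it exist) transform covariantly under similarity. Writing $A=PJP^{-1}$ with $J$ block diagonal via Jordan blocks, the convergence question reduces to analyzing one Jordan block $J_\lm$ of size $m$ at a time; the full statement will then follow by reassembling these one-block conclusions.

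For a single Jordan block $J_\lm=\lm I_m+N$, where $N$ denotes the nilpotent shift with $N^m=0$, the identity
\[
J_\lm^k=\sum_{j=0}^{\min(k,m-1)}\binom{k}{j}\lm^{k-j}N^j
\]
makes the asymptotics transparent. I would then split into three regimes: when $|\lm|<1$ each term tends to zero since polynomial growth of $\binom{k}{j}$ is crushed by geometric decay of $\lm^{k-j}$, so $J_\lm^k\to 0$; when $|\lm|>1$ the leading term $\lm^k I_m$ has unbounded norm; when $|\lm|=1$, the case $m=1$ reduces to whether $\lm^k$ converges, which happens exactly at $\lm=1$, while for $m\geq 2$ the coefficient $k\lm^{k-1}$ of $N$ is unbounded and forces divergence. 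Reassembling yields \eqref{less1}--\eqref{equal1}: either every eigenvalue lies strictly inside the unit disk, or $\rho(A)=1$ and $\lm=1$ is the unique unit-modulus eigenvalue with all its Jordan blocks of size $1$, which is equivalent to $\mathrm{index}(1)=1$, i.e.\ semisimpleness by Fact~\ref{f:ss}.

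To identify $A^\infty$ as a projector, note that in the case $\rho(A)<1$ every block vanishes in the limit, giving $A^\infty=0$. In the case $\rho(A)=1$, the block-wise analysis shows that $J^\infty$ restricts to the identity on each $\lm=1$ block and to zero on every other block, so $A^\infty$ is idempotent with $\ran A^\infty=\ker(A-\Id)$ and $\ker A^\infty=\bigoplus_{\lm\neq 1}G_\lm$, where $G_\lm$ is the generalized eigenspace at $\lm$. To match this kernel with $\ran(A-\Id)$, I would verify that $A-\Id$ is invertible on each $G_\lm$ with $\lm\neq 1$ (its eigenvalues there are $\lm-1\neq 0$), which gives $\bigoplus_{\lm\neq 1}G_\lm\subseteq\ran(A-\Id)$, and then close the inclusion using the dimension equality $\dim\ran(A-\Id)=n-\dim\ker(A-\Id)$ together with semisimpleness of $\lm=1$.

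The delicate step is the borderline $|\lm|=1$ case, where the semisimpleness hypothesis is precisely what prevents the $\binom{k}{j}$ factors from producing unbounded terms; this is the one place where the hypothesis $\mathrm{index}(1)=1$ is indispensable. A smaller technicality is that the direct-sum decomposition $\CC^n=\ker(A-\Id)\oplus\ran(A-\Id)$ used in the final step itself rests on $\lm=1$ being semisimple (via Fact~\ref{f:ss}), so one must be careful to invoke it only under that hypothesis and not in the converse direction of the equivalence.
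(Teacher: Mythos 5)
Your proof is correct and follows essentially the route the paper indicates: the statement is cited from Meyer, and the paper notes that its proof rests on the spectral resolution formula (Fact~\ref{f:spectralres}), which is precisely the coordinate-free version of the Jordan-block binomial expansion you write down. Your block-by-block case analysis (geometric decay beating polynomial growth when $|\lm|<1$, divergence from $\lm^k I_m$ when $|\lm|>1$, and isolating $\lm=1$ with block size $1$ on the unit circle) together with the dimension-count identification of $\ker A^\infty$ with $\ran(A-\Id)$ under the semisimpleness hypothesis reproduces the standard argument faithfully, including the care needed to invoke $\CC^n=\ker(A-\Id)\oplus\ran(A-\Id)$ only once semisimpleness is available.
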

The proof of the above fact is indeed based on the spectral resolution of $A^k$ stated below.
\begin{fact}[spectral resolution of $A^k$]\emph{(\cite[page 603 and page 629]{Meyer})}\label{f:spectralres}
For $k\in \NN$ and $A\in \CC^{n\times n}$ with $\sigma(A)=\{\lambda_{1},\lambda_{2},\ldots, \lambda_{s}\}$
and  $k_{i}={\rm index}\, (\lambda_{i})$,  we have
\begin{equation}\label{e:spec}
A^{k}=\sum_{i=1}^{s}\lambda_{i}^{k}G_{i}+\sum_{i=1}^{s}\sum_{j=1}^{k_{i}-1}
\begin{pmatrix}
k\\
j
\end{pmatrix}
\lambda_{i}^{k-j}(A-\lambda_{i}\Id)^{j}G_{i},
\end{equation}
where the spectral projector $G_{i}$'s have the following properties:
\begin{enumerate}
\item $G_{i}$ is the projector onto  $\ker((A-\lambda_{i}\Id)^{k_{i}})$
along $\ran ((A-\lambda_{i}\Id)^{k_{i}})$.
\item $G_{1}+G_{2}+\cdots +G_{s}=\Id$.
\item $G_{i}G_{j}=0$ when $i\neq j$.
\item $N_{i}=(A-\lambda_{i}\Id)G_{i}=G_{i}(A-\lambda_{i}\Id)$ is \emph{nilpotent} of index
$k_{i}$, i.e., $N_i^{k_i}=0$ and $N_i^{k_i-1}\neq0$ .
\end{enumerate}
\end{fact}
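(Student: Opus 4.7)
The plan is to derive the formula from the primary decomposition of $\CC^n$ associated with the distinct eigenvalues $\lambda_1,\ldots,\lambda_s$ of $A$. By the primary decomposition theorem (a consequence of the Cayley--Hamilton theorem and the fact that the minimal polynomial factors as $\prod_{i=1}^s (t-\lambda_i)^{k_i}$ with $k_i = \operatorname{index}(\lambda_i)$), one has the direct sum decomposition
\begin{equation*}
\CC^n \;=\; \bigoplus_{i=1}^s \ker\bigl((A-\lambda_i \Id)^{k_i}\bigr),
\end{equation*}
and each summand is $A$-invariant. I would \emph{define} $G_i$ to be the projector onto $\ker((A-\lambda_i\Id)^{k_i})$ along the sum of the remaining generalized eigenspaces. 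Then properties (ii) and (iii) are built in, and (i) is immediate since the chain $\ker(A-\lambda_i\Id)^j$ stabilizes at $j=k_i$ by definition of the index, so $\ran((A-\lambda_i\Id)^{k_i})$ equals the sum of the other generalized eigenspaces.

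Next I would establish that each $G_i$ commutes with $A$. Since $(A-\lambda_i\Id)^{k_i}$ commutes with $A$, its kernel is $A$-invariant, as is its range; therefore $A$ preserves the decomposition, and any projector associated with this decomposition commutes with $A$. This yields both equalities in (iv): $N_i := (A-\lambda_i\Id)G_i = G_i(A-\lambda_i\Id) = G_i(A-\lambda_i\Id)G_i$. Nilpotency of index exactly $k_i$ then follows because $N_i^{k_i} = (A-\lambda_i\Id)^{k_i} G_i = 0$ (as $G_i$ maps into $\ker(A-\lambda_i\Id)^{k_i}$), while $N_i^{k_i-1}\neq 0$ since the chain of kernels does not stabilize earlier.

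For the spectral formula itself, I would write $A = \sum_i A G_i = \sum_i (\lambda_i G_i + N_i)$ using (ii). Since $G_i G_j = 0$ for $i \neq j$ by (iii), and $N_i = G_i N_i = N_i G_i$ by (iv), the summands $\lambda_i G_i + N_i$ are mutually annihilating in all cross-products. Consequently
\begin{equation*}
A^k \;=\; \sum_{i=1}^s (\lambda_i G_i + N_i)^k,
\end{equation*}
and the binomial theorem applies block-by-block because $\lambda_i G_i$ and $N_i$ commute and $G_i$ is idempotent. Using $G_i^m = G_i$ for $m\ge 1$ and $N_i^j = 0$ for $j\ge k_i$, the expansion truncates to $\lambda_i^k G_i + \sum_{j=1}^{k_i-1}\binom{k}{j}\lambda_i^{k-j} N_i^j$, and summing over $i$ yields \eqref{e:spec} after noting $N_i^j = (A-\lambda_i\Id)^j G_i$.

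The main obstacle is the primary decomposition itself, which rests on the fact that the minimal polynomial of $A$ has precisely the exponents $k_i = \operatorname{index}(\lambda_i)$; all remaining steps reduce to bookkeeping with the projector identities (i)--(iv) and the commutativity of $\lambda_i G_i$ with $N_i$. Everything else, including the binomial truncation at $k_i - 1$, is then automatic.
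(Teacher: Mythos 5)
The paper itself offers no proof of this Fact; it is cited verbatim from Meyer's book, whose derivation likewise rests on the primary decomposition into generalized eigenspaces and a blockwise binomial expansion, so your route matches the cited source. Your argument is correct. The only place worth tightening is the justification of property (i): the identity $\ran\bigl((A-\lambda_i\Id)^{k_i}\bigr)=\bigoplus_{j\neq i}\ker\bigl((A-\lambda_j\Id)^{k_j}\bigr)$ does not follow merely from the stabilization of the kernel chain, but rather from the observation that $A-\lambda_i\Id$ restricts to an isomorphism of each $\ker\bigl((A-\lambda_j\Id)^{k_j}\bigr)$ for $j\neq i$ (since the only eigenvalue of $A-\lambda_i\Id$ there is $\lambda_j-\lambda_i\neq 0$), combined with a rank--nullity count. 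Everything else --- the commutation of $G_i$ with $A$, nilpotency of exact index $k_i$, the mutual annihilation giving $A^k=\sum_i(\lambda_iG_i+N_i)^k$, and the binomial truncation at $j=k_i-1$ --- is sound.
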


\begin{remark}\label{r:ma} Note from Fact~\ref{f:spectralres} (i) and (iv) that
\[
0\neq N_i^{k_i-1}=(A-\lambda_{i}\Id)^{k_i-1}G_{i}^{k_i-1}=(A-\lambda_{i}\Id)^{k_i-1}G_{i}\quad \mbox{if}\quad k_i>1.
\]

\end{remark}

\begin{corollary} \label{c:non} {Suppose that  $A\in \CC^{n\times n}$ is  convergent to $A^\infty\in \CC^{n\times n}$. Then the following hold:}

{{\bf (i)} $A^\infty=P_{\Fix A}$  if and only if $\Fix A=\Fix A^*$.}

{{\bf (ii)} If $A$ is nonexpansive or normal, then $A^\infty=P_{\Fix A}$.}
\end{corollary}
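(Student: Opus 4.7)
The plan is to exploit Fact~\ref{t:limitp}, which already identifies $A^\infty$ as the (in general oblique) projector onto $\Fix A = \ker(A-\Id)$ along $\ran(A-\Id)$, together with the standard finite-dimensional identity $\ran M = (\ker M^*)^\perp$ applied to $M = A-\Id$.

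For part (i), I would argue that the projector onto $\Fix A$ along $\ran(A-\Id)$ coincides with the orthogonal projection $P_{\Fix A}$ precisely when the complementary subspace is $(\Fix A)^\perp$. Since
\[
\ran(A-\Id) = \bigl(\ker(A-\Id)^*\bigr)^\perp = \bigl(\ker(A^*-\Id)\bigr)^\perp = (\Fix A^*)^\perp,
\]
the condition $\ran(A-\Id)=(\Fix A)^\perp$ is equivalent to $(\Fix A^*)^\perp=(\Fix A)^\perp$, which, by taking orthogonal complements, is equivalent to $\Fix A=\Fix A^*$. This settles (i).

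For part (ii), by (i) it suffices to verify $\Fix A = \Fix A^*$ under either hypothesis. If $A$ is normal, then $A-\Id$ is also normal (since $\Id$ commutes with everything), and for any normal operator $M$ one has $\ker M=\ker M^*$; applied to $M=A-\Id$ this gives $\Fix A = \ker(A-\Id) = \ker(A^*-\Id) = \Fix A^*$. If instead $A$ is nonexpansive and $x\in\Fix A$, then using $\|A^*\|=\|A\|\le1$,
\[
\|A^*x-x\|^2 = \|A^*x\|^2 - 2\,\mathrm{Re}\,\langle A^*x,x\rangle + \|x\|^2 = \|A^*x\|^2 - 2\,\mathrm{Re}\,\langle x,Ax\rangle + \|x\|^2 \le \|x\|^2 - 2\|x\|^2 + \|x\|^2 = 0,
\]
so $A^*x=x$, giving $\Fix A\subseteq\Fix A^*$; the reverse inclusion follows by applying the same argument to the nonexpansive operator $A^*$ (whose adjoint is $A$).

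No step looks deep; the only minor subtlety is being careful that $A-\Id$ inherits normality from $A$, and that the argument for nonexpansive $A$ uses $\|A^*\|\le 1$, which is automatic in finite dimensions. The whole proof is essentially a bookkeeping exercise combining Fact~\ref{t:limitp} with the kernel/range duality and the characterization of fixed points for normal/nonexpansive operators.
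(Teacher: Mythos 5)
Your proof is correct and follows the same overall strategy as the paper: use Fact~\ref{t:limitp} to identify $A^\infty$ as the oblique projector onto $\Fix A$ along $\ran(A-\Id)$, establish (i), and then reduce (ii) to verifying $\Fix A=\Fix A^*$. The details differ in two places, both of which make your argument more self-contained. For (i), the paper proves the forward implication by first invoking the identity $\ran(A^\infty-\Id)=\ran(A-\Id)$ from Meyer, then unwinding; you instead observe directly that two idempotents with the same range $\Fix A$ agree if and only if their kernels agree, so $A^\infty=P_{\Fix A}$ is equivalent to $\ran(A-\Id)=(\Fix A)^\perp$, and since $\ran(A-\Id)=\bigl(\ker(A^*-\Id)\bigr)^\perp=(\Fix A^*)^\perp$ always holds, both directions fall out at once. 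For the nonexpansive case of (ii), the paper cites \cite[Lemma~2.1]{BDHP}, while you give the direct one-line estimate: for $x\in\Fix A$, $\|A^*x-x\|^2=\|A^*x\|^2-2\operatorname{Re}\langle x,Ax\rangle+\|x\|^2\le\|x\|^2-2\|x\|^2+\|x\|^2=0$ using $\|A^*\|=\|A\|\le1$, and the reverse inclusion by symmetry. The normal case is handled the same way in both proofs ($\ker M=\ker M^*$ for normal $M=A-\Id$). Both routes are sound; yours buys self-containment and a slightly more symmetric proof of (i) at no extra cost.
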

\begin{proof} {It follows from \eqref{Q} that $A^\infty$ is equal to the projector onto $\ker(A-\Id)$ along $\ran (A-\Id)$. Thanks to the equality \cite[(5.9.11)]{Meyer}, we have  $\ran(A^\infty-\Id)=\ran(A-\Id)$. If  $A^\infty=P_{\Fix A}$, we obtain
\begin{equation*}
\ran(A-\Id)=\ran(A^\infty-\Id)=\ran (P_{\Fix A}-\Id)=\ran(P_{(\Fix A)^\perp})=(\Fix A)^\perp.
\end{equation*}
It follows that
\begin{equation*}
\Fix A=\big[(\Fix A)^\perp\big]^\perp=\ran(A-\Id)^\perp=\ker(A^*-\Id)=\Fix A^*.
\end{equation*}
Conversely, if $\Fix A=\Fix A^*$, we have
\begin{equation*}
\ker(A-\Id)=\Fix A=\Fix A^*=\ker(A^*-\Id)=\ran(A-\Id)^\perp,
\end{equation*}
which implies in turn that  the projector onto $\ker(A-\Id)$ along $\ran (A-\Id)$ is exactly the orthogonal projection $P_{\Fix A}$. The first part {\bf (i)} of the corollary is complete.}

{To justify the second part {\bf (ii)}, suppose in addition that $A$ is nonexpansive. Then $\Fix A=\Fix A^*$ by \cite[Lemma~2.1]{BDHP} and thus $A$ is convergent to $P_{\Fix A}$. Moreover, if $A$ is normal, then $A-\Id$ is also normal. Hence for all $x\in \CC^n$ we have
\begin{equation*}
\|(A-\Id)x\|^2=\la (A-\Id)^*(A-\Id)x,x\ra=\la (A-\Id)(A-\Id)^*x,x\ra= \|(A-\Id)^*x\|^2.
\end{equation*}
The latter clearly shows that $\Fix A=\Fix A^*$ and thus $A^\infty=P_{\Fix A}$. The proof is complete.} \end{proof}


\begin{remark}[convergence, firmly nonexpansiveness and nonexpansiveness]
Let $A\in \RR^{n\times n}$.  When $A$ is firmly nonexpansive, $A$ is convergent; see, e.g., \cite[Example~5.17]{BC2011}.
However, the converse implication fails. Indeed,  consider, for $n\geq 2$,
\begin{equation*}
A =
\begin{pmatrix}
0 & n^{-2}\\
n & 0
\end{pmatrix}.
\end{equation*}
Then $A$ is not (firmly) nonexpansive  because $Ae_1 = ne_2$ where $e_{1}=(1,0)^{\intercal}$ and 
$e_{2}=(0,1)^{\intercal}$.
On the other hand, the characteristic polynomial is
$\lambda\mapsto \lambda^2 - n^{-1}$, which has roots
$\pm n^{-1/2}$. Thus $A$ is convergent due to Fact~\ref{t:limitp}. Moreover, convergence and nonexpansiveness are independent, e.g., $A=-\Id$ is nonexpansive but not convergent.
\end{remark}

We will prove later in this section that whenever $A$ is convergent to $A^\infty$, it is linearly convergent with the  rate not smaller than $\rho(A-A^\infty)$. To manipulate this idea, let us take into account the case of diagonalizable matrices as follows.
\begin{example}[diagonalizable case]\label{diag}
Suppose that $A\in \CC^{n\times n}$ is \emph{diagonalizable} and that $\sigma(A)=\{\lm_1,\ldots,\lm_s\}$ with
\begin{equation*}
1=\lm_1>|\lambda_{2}|\geq |\lambda_{3}|\geq \cdots\geq |\lambda_{s}|.
\end{equation*}
By Fact~\ref{f:spectralres} and Fact~\ref{t:limitp}, we have $A$ is convergent to $A^\infty$ and that
\begin{align*}
A^{k}=A^\infty+\lambda_{2}^kG_{2}+\cdots+\lambda_{s}^{k}G_{s},
\end{align*}
which yields
\begin{align*}
A^{k}-A^\infty=\lambda_{2}^kG_{2}+\cdots+\lambda_{s}^{k}G_{s}.
\end{align*}
It follows that
\begin{subequations}
\begin{align*}
\|A^{k}-A^\infty\|& \leq |\lambda_{2}|^{k}\bigg[ \Big(\frac{|\lambda_{2}|}{|\lambda_{2}|}\Big)^{k}\|G_{2}\|+\cdots +
\Big(\frac{|\lambda_{s}|}{|\lambda_{2}|}\Big)^{k}\|G_{s}\|\bigg]\\
&\le |\lambda_{2}|^{k}\big( \|G_{2}\|+\cdots +
\|G_{s}\|\big).
\end{align*}
\end{subequations}
Hence $A^{k}\rightarrow A^\infty$ with  the linear rate $|\lambda_{2}|$.
\end{example}

In general an eigenvalue having second-largest modulus after 1 is called a subdominant eigenvalue.
\begin{definition}[subdominant eigenvalues] \emph{(\cite{K,NN})} For $A\in \CC^{n\times n}$, we define
\begin{align}\label{subdo}
\gamma(A):=\max\big\{|\lm| |\;\lm\in\{0\}\cup\sigma(A)\setminus\{1\}\big\}.
\end{align}
An eigenvalue $\lm\in \sigma(A)$ satisfying $|\lm|=\gg(A)$ is referred as a \emph{subdominant} eigenvalue.
\end{definition}
When $A$ is not diagonalizable, $\gg(A)$ need not be the convergence rate.
\begin{example}\label{ex:lm} Let us consider the following matrix
\begin{align*}
A=\begin{pmatrix}
1 & 0 & 0\\
0& \frac{1}{2} &  1 \\
0  & 0 & \frac{1}{2}
\end{pmatrix},
\end{align*}
which gives us that  $\gg(A)=\frac{1}{2}$. Note also that $A$ is not  diagonalizable. Moreover,  by induction it is easy to check that
\begin{align*}
A^k=\begin{pmatrix}
1 & 0 & 0\\
0& \frac{1}{2^k} &   \frac{k}{2^{k-1}} \\
0  & 0 & \frac{1}{2^{k}}
\end{pmatrix}\qquad\mbox{for all}\quad k\in \NN.
\end{align*}
Hence we have  $A^k\to A^\infty:=\begin{pmatrix}
1 & 0 & 0\\
0& 0 &  0 \\
0  & 0 & 0
\end{pmatrix}$ as $k\to \infty$. However, observe that
\begin{align*}
\frac{\|A^k-A^\infty\|}{\gg(A)^k}=2^k\left\|\begin{pmatrix}
0 & 0 & 0\\
0& \frac{1}{2^k} &   \frac{k}{2^{k-1}} \\
0  & 0 & \frac{1}{2^{k}}
\end{pmatrix}\right\|=\left\|\begin{pmatrix}
0 & 0 & 0\\
0& 1 &   2k \\
0  & 0 & 1
\end{pmatrix}\right\|\to \infty \quad\mbox{as}\quad k\to \infty.
\end{align*}
Hence $\gg(A)$ is not a convergence rate. However, observe further that any $\mu\in(\frac{1}{2},1)$ is a convergence rate of $A$. Thus  $A$ does not obtain the optimal convergence rate.    \endproof
\end{example}

Our first main result below shows that whenever a matrix $A$ is
convergent, it must be linearly convergent with any rate in
$(\gg(A),1)$. {The theorem can be extended for
linear operator in infinite-dimensional spaces by connecting the
proof below with those of \cite[Theorem~2.1 and 2.2]{BGM}.}

\begin{theorem}\label{t:linear}{\bf (rate of convergence I)}
Suppose that  $A\in \CC^{n\times n}$ is convergent to $A^\infty\in \CC^{n\times n}$. Then we have $\gamma(A)=\rho(A-A^\infty)<1$ and that
\begin{align}\label{e:power}
(A-A^\infty)^k=A^k-A^\infty\quad \mbox{for all}\quad k\in \NN.
\end{align}
Moreover, the following two assertions are satisfied:

{\bf (i)} $A$ is linearly convergent with any rate $\mu \in ( \gamma(A),1)$.

{\bf (ii)} If  $A$ is linearly convergent with rate $\mu\in [0,1)$, then $\mu \in [\gamma(A),1)$.
\end{theorem}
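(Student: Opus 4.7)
The plan is to establish the key algebraic identity \eqref{e:power} first, then exploit it together with the spectral radius formula (Fact~\ref{spec-for}) to reduce everything to computing $\rho(A-A^\infty)$ and comparing it to $\gamma(A)$.

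\textbf{Step 1: Commutation and idempotence of $A^\infty$.} Since $A^\infty=\lim_{k\to\infty}A^k$, passing to the limit in $A\cdot A^k = A^{k+1} = A^k\cdot A$ yields $A A^\infty = A^\infty = A^\infty A$. Taking the limit in $A^k\cdot A^k = A^{2k}$ gives $(A^\infty)^2=A^\infty$. With these three identities, a direct induction on $k\ge1$ establishes
\begin{equation*}
(A-A^\infty)^k = A^k - A^\infty,
\end{equation*}
since $(A^k-A^\infty)(A-A^\infty) = A^{k+1} - A^k A^\infty - A^\infty A + (A^\infty)^2 = A^{k+1} - A^\infty$.

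\textbf{Step 2: The spectrum of $A-A^\infty$.} By Fact~\ref{t:limitp}, either $\rho(A)<1$, in which case $1\notin\sigma(A)$, $A^\infty=0$, and trivially $\rho(A-A^\infty)=\rho(A)=\gamma(A)<1$; or $\rho(A)=1$ with $\lambda_1=1$ semisimple and the only eigenvalue on the unit circle, so $k_1=1$. In the latter case, Fact~\ref{f:spectralres} together with \eqref{Q} yields $A^\infty = G_1$, and the spectral resolution $A = \sum_{i=1}^{s}(\lambda_i G_i + N_i)$ (with $N_1=0$) gives
\begin{equation*}
A-A^\infty = \sum_{i=2}^{s}(\lambda_i G_i + N_i).
\end{equation*}
Using the orthogonality $G_iG_j=0$ for $i\neq j$ and $G_i^2=G_i$, one sees that the nonzero eigenvalues of $A-A^\infty$ are exactly $\lambda_2,\dots,\lambda_s$, so $\rho(A-A^\infty)=\max_{i\ge2}|\lambda_i|=\gamma(A)<1$.

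\textbf{Step 3: The two rate assertions.} Combining Steps 1 and 2 with Fact~\ref{spec-for},
\begin{equation*}
\gamma(A) = \rho(A-A^\infty) = \lim_{k\to\infty}\|(A-A^\infty)^k\|^{1/k} = \lim_{k\to\infty}\|A^k-A^\infty\|^{1/k}.
\end{equation*}
For (i), given $\mu\in(\gamma(A),1)$, this limit being $<\mu$ implies $\|A^k-A^\infty\|^{1/k}<\mu$ for all $k$ past some $N$, i.e.\ $\|A^k-A^\infty\|\le \mu^k$, which is \eqref{Rate} with $M=1$. For (ii), if \eqref{Rate} holds with rate $\mu$, then $\|A^k-A^\infty\|^{1/k}\le M^{1/k}\mu$ for $k>N$, and letting $k\to\infty$ gives $\gamma(A)\le\mu$.

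The main obstacle is really just Step 1; everything else is a mechanical consequence of the spectral decomposition already recorded in Facts~\ref{spec-for}--\ref{f:spectralres}. The delicate point in Step 1 is noting that $A^\infty$ need \emph{not} equal $P_{\Fix A}$ (see Corollary~\ref{c:non}), so one cannot use orthogonal projection properties — the commutation $AA^\infty=A^\infty A=A^\infty$ must be obtained directly from $A^\infty=\lim A^k$.
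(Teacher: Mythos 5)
Your proof is correct and follows the same overall strategy as the paper's: establish the algebraic identity \eqref{e:power}, compute $\rho(A-A^\infty)$, then apply Gelfand's formula (Fact~\ref{spec-for}) for both directions (i) and (ii). The one genuine difference is in how the commutation/idempotence identities $AA^\infty=A^\infty A=(A^\infty)^2=A^\infty$ are obtained. The paper reads them off the explicit Jordan decomposition \eqref{Jordan}--\eqref{e:infy}, whereas you derive them directly by passing to the limit in $A\cdot A^k=A^{k+1}=A^k\cdot A$ and $A^k\cdot A^k=A^{2k}$. Your route is more elementary, does not presuppose anything beyond $A^\infty=\lim_k A^k$, and — as you note yourself — avoids mistakenly invoking orthogonal-projection properties, which would be illegitimate since $A^\infty$ is only an oblique projector in general (Corollary~\ref{c:non}). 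It also has the advantage of portability to infinite-dimensional settings where a Jordan form is unavailable. Your Step~2 uses the abstract spectral resolution of Fact~\ref{f:spectralres} in place of the Jordan form to identify $\sigma(A-A^\infty)=\{0\}\cup\{\lambda_2,\dots,\lambda_s\}$; this is essentially equivalent to what the paper does, just packaged differently. One small wrinkle worth stating explicitly: when you write ``the nonzero eigenvalues of $A-A^\infty$ are exactly $\lambda_2,\dots,\lambda_s$,'' the more careful statement is that the full spectrum is $\{0\}\cup\{\lambda_2,\dots,\lambda_s\}$ (some $\lambda_i$ may themselves be $0$, and the $0$ from $\ran G_1$ is there whenever $1\in\sigma(A)$); this is harmless since the conclusion $\rho(A-A^\infty)=\max\{0,|\lambda_2|,\dots,|\lambda_s|\}=\gamma(A)$ is unaffected, but it is worth tightening for a written-up proof.
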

\begin{proof} First let us justify that $\gamma(A)=\rho(A-A^\infty)<1$ and \eqref{e:power} by considering the two following cases taken from  Fact~\ref{t:limitp}:

{\em Case 1.}  $ \rho(A)<1$. In this case we have $A^\infty=0$ by \eqref{SRF}. It follows that $\gamma(A)=\rho(A)=\rho(A-A^\infty)<1$. Note also that \eqref{e:power} is trivial, since $A^\infty=0$.

{\em Case 2.} $\rho(A)=1$ and  $\lm=1$ is semisimple and the only eigenvalue on the unit circle. Suppose that $\sigma(A)\setminus\{1\}=\{\lm_2,\ldots,\lm_s\}$ with $1>|\lm_2|\ge \ldots\ge|\lm_s|$. The Jordan decomposition \cite[page 590]{Meyer} of $A$ allows us to find an invertible matrix $P\in\CC^{n \times n}$ and $r>0$ such that
\begin{equation}\label{Jordan}
A=PJP^{-1}
\end{equation}
with $J$ being the Jordan form of $A$,
\begin{align*}J=\begin{pmatrix}
I_{r} & 0 & \cdots & 0\\
0 & J(\lambda_{2}) & \cdots & 0\\
\vdots & \vdots & \ddots & \vdots\\
0 & 0 & \cdots & J(\lambda_{s})
\end{pmatrix}, \qquad \qquad
J(\lambda_{j})=\begin{pmatrix}
J_{1}(\lambda_{j}) & 0 & \cdots & 0\\
 0 & J_{2}(\lambda_{j}) &  \ddots & 0\\
 \vdots & \vdots & \ddots & \vdots \\
0 &  0 & \cdots & J_{t_{j}}(\lambda_{j})
\end{pmatrix},
\end{align*}
and
\begin{align*}J_{*}(\lambda_{j})=\begin{pmatrix}
\lambda_{j} & 1 & &\\
&  \ddots & \ddots & \\
&   & \ddots & 1\\
& & & \lambda_{j}
\end{pmatrix}.
\end{align*}
Moreover, it follows from \cite[p. 629]{Meyer} that
\begin{equation}\label{e:infy}
A^\infty=P\begin{pmatrix}
I_{r} & 0 \\
0 & 0
\end{pmatrix} P^{-1}.
\end{equation}
This together with the Jordan decomposition above gives us that
\begin{equation}\label{e:dif}
A-A^\infty=P\begin{pmatrix}
0_{r} & 0 & \cdots & 0\\
0 & J(\lambda_{2}) & \cdots & 0\\
\vdots & \vdots & \ddots & \vdots\\
0 & 0 & \cdots & J(\lambda_{s})
\end{pmatrix}P^{-1},
\end{equation}
which readily yields $\rho(A-A^\infty)=\max\{0,|\lm_2|\}=\gg(A)<1$. Observe further from \eqref{Jordan} and \eqref{e:infy} that $AA^\infty=A^\infty A=(A^\infty)^2=A^\infty$. For any $k\in \NN$ the latter gives us that
\begin{align*}
 (A^k-A^\infty)(A-A^\infty)=A^{k+1}-A^kA^\infty-A^\infty A+(A^\infty)^2=A^{k+1}-A^\infty-A^\infty +A^\infty=A^{k+1}-A^\infty.
 \end{align*}
By using this expression, we may prove by induction \eqref{e:power} and thus completes the first part of the theorem.

  Now to verify (i), pick any $\mu\in (\gg(A),1)=(\rho(A-A^\infty),1)$.  Employing \eqref{SRF} for operator $A-A^\infty$ allows us to find some $N\in \NN$ such that
\begin{eqnarray*}
\|A^k-A^\infty\|=\|(A-A^\infty)^k\|\le \mu^k\quad \mbox{for all}\quad k\ge N,
\end{eqnarray*}
which verifies the linear convergence of $A$ with rate $\mu$.\\[-2ex]

It remains to prove (ii). Suppose that $A$ is convergent to $A^\infty$ with rate $\mu\in [0,1)$. Hence there are some $M,N>0$ such that
\[
\|A^k-A^\infty\|\le M\mu^k\quad \mbox{for all}\quad k>N, k\in \NN.
\]
Combining this with the spectral radius formula \eqref{SRF} and \eqref{e:power} gives us that
\[
 \gg(A)= \rho(A-A^\infty)=\lim_{k\to\infty}\|(A-A^\infty)^k\|^\frac{1}{k}=\lim_{k\to\infty}\|A^k-A^\infty\|^\frac{1}{k}\le \lim_{k\to\infty}M^\frac{1}{k}\mu=\mu,
\]
which ensures $ \gg(A)\le \mu$ and thus completes the proof of the theorem.
\end{proof}

A natural question arising from the above theorem is that  in which case $ \gg(A)$ is the optimal convergence rate of $A$; see our Definition~\ref{d:CM}. By Theorem~\ref{t:linear}, the actual problem is that when  $\gg(A)$ is a convergence rate of $A$; see also our Example~\ref{ex:lm}. The next theorem gives us a complete answer for this question.

\begin{theorem}[rate of convergence II] \label{t:linearII}
Let $A\in \CC^{n\times n}$  be convergent to $A^\infty\in\CC^{n\times n}$. Then $\gg(A)$ is the optimal convergence rate of  $A$ if and only if all the subdominant eigenvalues are semisimple.
\end{theorem}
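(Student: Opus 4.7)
The plan is to combine Theorem~\ref{t:linear} with the explicit spectral resolution of $A^k$ from Fact~\ref{f:spectralres}. Parts (i)--(ii) of Theorem~\ref{t:linear} already sandwich the set of admissible convergence rates: any $\mu\in(\gamma(A),1)$ works and every convergence rate must be $\ge\gamma(A)$. So the optimal rate equals $\gamma(A)$ precisely when $\gamma(A)$ itself is a convergence rate. This reduction is the only real use I make of Theorem~\ref{t:linear}; everything after is a direct analysis of the spectral resolution.

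Setting $\lambda_1=1$ (semisimple, so $k_1=1$, and $A^\infty=G_1$ by Fact~\ref{t:limitp}), I would write
\begin{equation*}
A^k-A^\infty=\sum_{i=2}^s\lambda_i^k G_i+\sum_{i=2}^s\sum_{j=1}^{k_i-1}\binom{k}{j}\lambda_i^{k-j}N_i^j,
\end{equation*}
and split the indices $\{2,\ldots,s\}$ into the subdominant ones ($|\lambda_i|=\gamma(A)$) and the strictly subordinate ones ($|\lambda_i|<\gamma(A)$). For the ``if'' direction, assuming $k_i=1$ for every subdominant $i$, the inner sum is empty for those indices, so their contribution is exactly $\lambda_i^kG_i$ with norm $\gamma(A)^k\|G_i\|$. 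For the strictly subordinate indices, every term has the form $(\text{polynomial in }k)\cdot|\lambda_i|^k$, and since $|\lambda_i|/\gamma(A)<1$, this is $o(\gamma(A)^k)$. Adding these bounds and swallowing the eventual polynomial factor into a constant delivers $\|A^k-A^\infty\|\le M\gamma(A)^k$ for large $k$, establishing that $\gamma(A)$ is a convergence rate.

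For the ``only if'' direction I would argue by contraposition: suppose some subdominant eigenvalue $\lambda_*$ has index $k_*\ge 2$. By Remark~\ref{r:ma}, $N_*^{k_*-1}\ne 0$, so there is $v\in\ran G_*$ with $N_*^{k_*-1}v\ne 0$. Using that $G_*$ commutes with $A$ and $G_iG_*=\delta_{i,*}G_*$, applying the spectral resolution to $v$ collapses to
\begin{equation*}
(A^k-A^\infty)v=\lambda_*^k v+\sum_{j=1}^{k_*-1}\binom{k}{j}\lambda_*^{k-j}N_*^jv.
\end{equation*}
The Jordan chain $v,N_*v,\ldots,N_*^{k_*-1}v$ is linearly independent (any nontrivial relation, multiplied by an appropriate power of $N_*$, would force $N_*^{k_*-1}v=0$), so I can choose a linear functional $\varphi$ with $\varphi(N_*^{k_*-1}v)=1$ and $\varphi(N_*^j v)=0$ for $j<k_*-1$. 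Then $|\varphi((A^k-A^\infty)v)|=\binom{k}{k_*-1}\gamma(A)^{k-k_*+1}$, yielding a lower bound $\|A^k-A^\infty\|\ge c\,k^{k_*-1}\gamma(A)^k$ that no constant multiple of $\gamma(A)^k$ can dominate, so $\gamma(A)$ is not a convergence rate.

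The delicate point, and the step I expect to need the most care, is the lower bound in the necessary direction: isolating the $j=k_*-1$ Jordan term from $(A^k-A^\infty)v$ without relying on orthogonality, which is what the linear independence of the Jordan chain plus a tailored dual functional is designed to achieve. The sufficient direction is essentially bookkeeping once the spectral resolution is in hand.
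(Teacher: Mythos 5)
Your reduction to ``$\gamma(A)$ is the optimal rate iff $\gamma(A)$ is a convergence rate'' and your ``if'' direction are essentially identical to the paper's. Your ``only if'' direction is where you genuinely diverge, and it is arguably cleaner. The paper argues globally: it assumes $\gamma(A)$ is a rate and $\alpha:=\max\{\mathrm{index}(\lambda_i):|\lambda_i|=\gamma(A)\}>1$, divides the whole matrix expression $A^k-A^\infty$ by $\gamma(A)^k\binom{k}{\alpha-1}$, shows the lower-order terms vanish, passes to a subsequence to tame the unimodular phases $\lambda_i^k/|\lambda_i|^k$ (which need not converge), obtains a limiting identity $\sum_{i\in S}x_i\lambda_i^{-(\alpha-1)}(A-\lambda_i\Id)^{\alpha-1}G_i=0$, and finally multiplies by $G_l$ to isolate a single nonzero nilpotent term. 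Your argument is local: pick one subdominant $\lambda_*$ with $k_*\ge 2$, one vector $v\in\ran G_*$ with $N_*^{k_*-1}v\neq 0$, and a dual functional $\varphi$ annihilating $v,N_*v,\dots,N_*^{k_*-2}v$ but not $N_*^{k_*-1}v$; this collapses $(A^k-A^\infty)v$ to a single Jordan block's contribution and gives the explicit lower bound $\|A^k-A^\infty\|\gtrsim k^{k_*-1}\gamma(A)^k$. Because you take $|\varphi(\cdot)|$, the unimodular phase of $\lambda_*$ disappears, so no subsequence extraction is needed, and because you restrict to $\ran G_*$ from the outset, no ``multiply by $G_l$'' step is needed. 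This buys a shorter and more transparent argument at the cost of not displaying the leading coefficient of the $k$-th power explicitly.

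One minor point shared with the paper: your lower bound degenerates when $\gamma(A)=0$ (then $\lambda_*=0$ and $\lambda_*^{k-k_*+1}=0$ for $k\ge k_*$, so $\varphi((A^k-A^\infty)v)=0$ and you get no contradiction). The paper separates out $\gamma(A)=0$ before the main estimate; you should note that your contraposition tacitly assumes $\gamma(A)>0$ and handle $\gamma(A)=0$ and $\sigma(A)=\{1\}$ by inspection. (In fact in those degenerate cases $A^k$ is eventually constant, so $\gamma(A)=0$ is always a rate, and the equivalence must be interpreted accordingly.)
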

\begin{proof} {By Theorem~\ref{t:linear}, we only need to prove that $\gg(A)$ is a convergence rate of  $A$ if and only if $\lm$ is semisimple for every eigenvalue $\lm\in \sigma(A)$ satisfying $|\lm|= \gg(A)$. For the matrix $A$, denote the set of
distinct eigenvalues in $\sigma(A)\setminus\{1\}$ by $\{\lambda_{2}, \ldots, \lambda_{s}\}$ and
$k_{i}=\text{index}(\lambda_{i})$, $i=2,\ldots,s$. This set may be empty, but in this case we have $\{1\}=\sigma(A)$ and $\text{index}(1)=1$ by Fact~\ref{t:limitp}; thus $A^k=G_1=A^\infty$ by \eqref{Q} and \eqref{e:spec} for all $k\in \NN$, which ensures that $\gg(A)=0$ is a convergence rate of $A$. From now on we suppose that $\sigma(A)\setminus\{1\}\not=\emptyset$ and that
\[
1>|\lambda_{2}|\geq |\lambda_{3}|\geq\cdots\geq |\lambda_{s}|.
\]
If $1\notin \sigma(A)$, we get from Fact~\ref{t:limitp} that $A^\infty=0$ and from \eqref{e:spec} that
\begin{align}\label{spec2a}
A^k=\sum_{i=2}^s\lm_i^kG_i+\sum_{i=2}^{s}\sum_{j=1}^{k_{i}-1}\begin{pmatrix}
k \\
j
\end{pmatrix}\lambda_i^{k-j}(A-\lambda_{i}\Id)^{j}G_{i}=A^\infty+\sum_{i=2}^{s}\sum_{j=0}^{k_{i}-1}\begin{pmatrix}
k \\
j
\end{pmatrix}\lambda_i^{k-j}(A-\lambda_{i}\Id)^{j}G_{i}.
\end{align}
If $\lm_1:=1\in \sigma(A)$, Fact~\ref{t:limitp} tells us that its index  is $1$. Hence, we obtain from  \eqref{Q} that $A^\infty=G_1$. This together with the spectral resolution \eqref{e:spec} gives us that
\begin{equation}\label{spec2b}
A^{k}=G_1+\sum_{i=2}^s\lm_i^kG_i+\sum_{i=2}^{s}\sum_{j=1}^{k_{i}-1}\begin{pmatrix}
k \\
j
\end{pmatrix}\lambda_i^{k-j}(A-\lambda_{i}\Id)^{j}G_{i}=A^\infty+\sum_{i=2}^{s}\sum_{j=0}^{k_{i}-1}\begin{pmatrix}
k \\
j
\end{pmatrix}\lambda_i^{k-j}(A-\lambda_{i}\Id)^{j}G_{i}.
\end{equation}
From both cases above, we always have
\begin{equation}\label{spec2}
A^{k}=A^\infty+\sum_{i=2}^{s}\sum_{j=0}^{k_{i}-1}\begin{pmatrix}
k \\
j
\end{pmatrix}\lambda_i^{k-j}(A-\lambda_{i}\Id)^{j}G_{i}.
\end{equation}}

{If $ \gg(A)=0$, then $\lm_2=0$ and $s=2$,  by \eqref{spec2} we have $A^k=A^\infty$ for all $k\ge 1$. This means that $A=A^\infty$ and $A^2=A$, which ensures that $\lm_2$ is semisimple and $\gg(A)=0$ is a convergence rate. Thus the statement of the theorem is trivial in this case. It remains to prove the theorem when $\gg(A)>0$.  Denote by
\begin{eqnarray}\label{F-max}\begin{array}{ll}
&\disp E:=\{2,\ldots,s\},\qquad F:=\menge{l\in \NN}{|\lambda_{l}|=|\lambda_{2}|,2\leq l\leq s},\\
 &\disp \alpha:=\max\menge{\text{index}(\lambda_{l})}{l\in F}, \quad \mbox{and}
 \quad S:=\menge{i\in F}{\mbox{index}(\lambda_{i})=\alpha}.
\end{array}
\end{eqnarray}
It is clear that $F\supset S\neq\emptyset$ and $\alpha\ge 1$.
By \eqref{spec2} we have
\begin{equation}
A^{k}-A^\infty=\underbrace{\sum_{i\in S}\sum_{j=0}^{k_{i}-1}\begin{pmatrix}
k\\
j
\end{pmatrix}\lambda_i^{k-j}(A-\lambda_{i}\Id)^{j}G_{i}}_{:=H}
+
\underbrace{\sum_{i\in E\setminus S}\sum_{j=0}^{k_{i}-1}\begin{pmatrix}
k\\
j
\end{pmatrix}\lambda_i^{k-j}(A-\lambda_{i}\Id)^{j}G_{i}}_{:=K}.\label{e:S}
\end{equation}
Note that
\begin{eqnarray}\label{e:K1}
\begin{array}{ll}
\disp\frac{K}{|\lm_2|^k\begin{pmatrix}
k\\
\alpha-1\end{pmatrix}}&\disp= \sum_{i\in E\setminus S}\sum_{j=0}^{k_{i}-1}\begin{pmatrix}
k \\
j
\end{pmatrix}
\frac{\lambda_i^{k-j}}{|\lambda_{2}|^{k}} \frac{1}{{\begin{pmatrix}
k\\
\alpha-1
\end{pmatrix}}}(A-\lambda_{i}\Id)^{j}G_{i}\\
& \disp=
\sum_{i\in (E\setminus S)\cup F}\sum_{j=0}^{k_{i}-1}\begin{pmatrix}
k \\
j
\end{pmatrix}
\frac{\lambda_i^{k-j}}{|\lambda_{2}|^{k}} \frac{1}{{\begin{pmatrix}
k\\
\alpha-1
\end{pmatrix}}}(A-\lambda_{i}\Id)^{j}G_{i}\\
&\disp+\sum_{i\in E\setminus F}\sum_{j=0}^{k_{i}-1}\begin{pmatrix}
k \\
j
\end{pmatrix}
\frac{\lambda_i^{k-j}}{|\lambda_{2}|^{k}} \frac{1}{{\begin{pmatrix}
k\\
\alpha-1
\end{pmatrix}}}(A-\lambda_{i}\Id)^{j}G_{i}.
\end{array}
\end{eqnarray}
For $i\in (E\setminus S)\cup F$ and $0\le j\le k_i-1$, observe from the definition of $\alpha$ in \eqref{F-max}  that $j\le \alpha-2$. It follows that
\begin{equation}\label{e:K5}
\frac{\begin{pmatrix}
k \\
j
\end{pmatrix}}{{\begin{pmatrix}
k\\
\alpha-1
\end{pmatrix}}}\big|\frac{\lm_i^{k-j}}{|\lm_2|^k}\big|=\frac{(\alpha-1)!(k-\alpha+1)!}{j!(k-j)!}\frac{1}{|\lm_2|^j}\le \frac{(\alpha-1)!}{j!(k-\alpha+2)}\frac{1}{|\lm_2|^j}:=\ve_1(k)\rightarrow 0
\end{equation}
as $k\to\infty$. For $i\in E\setminus F$ and $0\le j\le k_i-1$, we have
\begin{equation}\label{e:K4}
\left|\begin{pmatrix}
k \\
j
\end{pmatrix}
\frac{\lambda_i^{k-j}}{|\lambda_{2}|^{k}}\right|
\leq k^{j}\Big(\frac{|\lambda_i|}{|\lambda_{2}|}\Big)^{k-j}|\lm_2|^{-j}:=\ve_2(k)\rightarrow 0,
\end{equation}
since $k^{j}$ is polynomial in $k$ and
$(|\lambda_i|/|\lambda_{2}|)^{k-j}$ is exponential with $|\lambda_i|/|\lambda_{2}|<1$ for $i\notin F$. It follows from  \eqref{e:K1},  \eqref{e:K5}, and \eqref{e:K4}   that
\begin{equation}\label{e:K6}
\frac{\|K\|}{|\lm_2|^k\begin{pmatrix}
k\\
\alpha-1\end{pmatrix}}\le  \sum_{i\in (E\setminus S)\cup F}\sum_{j=0}^{k_{i}-1}\ve_1(k)\|(A-\lm_i)^jG_i\|+\sum_{i\in E\setminus F}\sum_{j=0}^{k_{i}-1}\ve_2(k)\|(A-\lm_i)^jG_i\|\to 0.
\end{equation}}

{Next let us justify the ``$\Rightarrow$'' part by supposing that $A$ is convergent to $A^\infty$  with the rate $ \gg(A)=|\lm_2|\in (0,1)$. Hence there are  some $M,N>0$ such that
\begin{equation}\label{li-ra}
\|A^k-A^\infty\|\le M|\lm_2|^k\quad \mbox{for all}\quad k>N, k\in \NN.
\end{equation}
We will prove that $\alpha=1$. Assume by contradiction that $\alpha>1$ and note from \eqref{e:S} that
\begin{eqnarray}\label{e:H}
\begin{array}{ll}
\disp\frac{H}{|\lm_2|^k\begin{pmatrix}
k\\
\alpha-1\end{pmatrix}}&\disp= \sum_{i\in S}\sum_{j=0}^{\alpha-1}\frac{\begin{pmatrix}
k \\
j
\end{pmatrix}}{{\begin{pmatrix}
k\\
\alpha-1
\end{pmatrix}}}
\frac{\lambda_i^{k-j}}{|\lambda_{2}|^{k}} (A-\lambda_{i}\Id)^{j}G_{i}\\
& \disp=\sum_{i\in S}
\frac{\lambda_i^{k-(\alpha-1)}}{|\lambda_{2}|^{k}} (A-\lambda_{i}\Id)^{\alpha-1}G_{i}
+
\sum_{i\in S}\sum_{j=0}^{\alpha-2}\frac{\begin{pmatrix}
k \\
j
\end{pmatrix}}{{\begin{pmatrix}
k\\
\alpha-1
\end{pmatrix}}}
\frac{\lambda_i^{k-j}}{|\lambda_{2}|^{k}} (A-\lambda_{i}\Id)^{j}G_{i}\\
&\disp
=\sum_{i\in S}
\frac{\lambda_i^{k}}{|\lambda_{2}|^{k}} \lambda_{i}^{-(\alpha-1)}(A-\lambda_{i}\Id)^{\alpha-1}G_{i}
+
\underbrace{\sum_{i\in S}\sum_{j=0}^{\alpha-2}\frac{\begin{pmatrix}
k \\
j
\end{pmatrix}}{{\begin{pmatrix}
k\\
\alpha-1
\end{pmatrix}}}\frac{\lambda_i^{k-j}}{|\lambda_{2}|^{k}} (A-\lambda_{i}\Id)^{j}G_{i}}_{:=H_1}.
\end{array}
\end{eqnarray}
Furthermore, for $i\in S$  and $j\le\alpha-2$ similarly to \eqref{e:K5} we may prove that
\begin{equation*}\label{limit0}
\frac{\begin{pmatrix}
k \\
j
\end{pmatrix}}{{\begin{pmatrix}
k\\
\alpha-1
\end{pmatrix}}}\big|\frac{\lm_i^{k-j}}{|\lm_2|^k}\big|:=\ve_3(k)\rightarrow 0\quad \mbox{when}\quad k\to\infty,
\end{equation*}
which implies in turn that
\begin{align}\label{e:H1}
\|H_1\|\le \sum_{i\in S}\sum_{j=0}^{\alpha-2}\ve_3(k)\|(A-\lambda_{i}\Id)^{j}G_{i}\|\to 0 \quad\mbox{as}\quad k\to \infty.
\end{align}
By dividing \eqref{li-ra} by ${|\lm_2|^k\begin{pmatrix}
k\\
\alpha-1\end{pmatrix}}$ and taking $k\to \infty$, we get from  \eqref{e:S},  \eqref{e:K6}, \eqref{e:H}, and \eqref{e:H1} tells us that
\begin{equation}\label{lim-sup}
\lim_{k\to\infty}\sum_{i\in S}
\frac{\lambda_i^{k}}{|\lambda_{2}|^{k}} \lambda_{i}^{-(\alpha-1)}(A-\lambda_{i}\Id)^{\alpha-1}G_{i}=\lim_{k\to \infty}\frac{M}{\begin{pmatrix}
k\\
\alpha-1
\end{pmatrix}}= 0.\end{equation}
Since  $\big|\frac{\lm_i}{|\lm_i|}\big|=1$ for all $i\in S$, by passing to subsequences  we may assume without loss of generality that for each $i\in S$ the sequence $\Big[\frac{\lm_i}{|\lm_i|}\Big]^k\rightarrow x_i$ with $|x_i|=1$ as $k\to \infty$.  Hence, it follows from \eqref{lim-sup} that
\begin{equation}\label{e:G}
\sum_{i\in S} x_{i}\lambda_{i}^{-(\alpha-1)}(A-\lambda_{i}\Id)^{\alpha-1}G_{i}=0.
\end{equation}
By Fact~\ref{f:spectralres} (i) and (iii), we have $G_{i}G_{j}=0$ when $i\neq j$ and $G_{i}G_{i}=G_{i}$.
For any $l\in S$, multiplying both sides of \eqref{e:G} by $G_{l}$ yields
$$x_{l}\lambda_{l}^{-(\alpha-1)}(A-\lambda_{l}\Id)^{\alpha-1}G_{l}=0,$$
which is impossible since $x_{l}\neq 0$, $|\lambda_{l}|=|\lambda_{2}|\neq 0$,
$(A-\lambda_{l}\Id)^{\alpha-1}G_{l}\neq 0$ by Remark~\ref{r:ma}. Thus, $\alpha=1$, thanks to the definition of $\alpha$ in \eqref{F-max} we get that all $\lm_i$, $i\in F$ has the same index $1$ and complete the first part of the proof.}\\[-2ex]

{Conversely, suppose that  all $\lm_i\in \sigma(A)$ satisfying  $|\lm_i|=\gg(A)=|\lm_2|>0$ are semisimple, which implies $\alpha$ in \eqref{F-max} is $1$. Hence we observe  that from \eqref{e:S} that
\begin{equation}\label{e:G2}
\disp\frac{\|H\|}{|\lm_2|^k}=\big\|\sum_{i\in S}\frac{\lm_i^k}{|\lm_2|^k} G_i
\big\|\le\sum_{i\in S}\|G_i\|.
\end{equation}
Moreover, the term $\frac{\|K\|}{|\lm_2|^k}$ still converges to $0$ as proved in \eqref{e:K6}. Combining this with \eqref{e:G2} and   \eqref{e:S}  gives us that $A$ is convergent to $A^\infty$ with the linear rate $|\lm_2|$. The proof of the theorem is complete.}\end{proof}

\begin{remark} It is worth mentioning that Example~\ref{diag} is also a direct consequence of Theorem~\ref{t:linearII}, since all the eigenvalues of $A$ are semisimple when $A$ is diagonalizable. Moreover, $\gg(A)$ is not the convergence rate in Example~\ref{ex:lm}, since $\frac{1}{2}=\gg(A)$ is not semisimple in this case.
\end{remark}

Next let us summarize Fact~\ref{t:limitp}, Theorem~\ref{t:linear}, and Theorem~\ref{t:linearII} in the following result, which provides a complete characterization for obtaining the optimal convergence rate.

\begin{theorem}[optimal convergence rate]\label{t:main} Let $A\in \CC^{n\times n}$. Then $A$ is convergent with the optimal convergence rate, which is $\gg(A)$ if and only if one of the following holds:

{\bf(i)} $\rho(A)<1$ and all $\lm\in\sigma(A)$ satisfying $|\lm|=\gg(A)$ are semisimple.

{\bf (ii)} $\rho(A)=1$, $\lm=1$ is the only eigenvalue on the unit circle, $\lm=1$ is semisimple, and all $\lm\in\sigma(A)$ satisfying $|\lm|=\gg(A)$ are semisimple.
\end{theorem}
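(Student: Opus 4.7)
The plan is to assemble Theorem~\ref{t:main} directly from the three preceding results: Fact~\ref{t:limitp}, Theorem~\ref{t:linear}, and Theorem~\ref{t:linearII}. By Definition~\ref{d:CM}, saying that $A$ admits $\gg(A)$ as its optimal convergence rate means two things simultaneously: (a) $A$ is convergent, and (b) $\gg(A)$ is itself a convergence rate (which, together with Theorem~\ref{t:linear}(ii), forces it to be the infimum of all convergence rates and hence the minimum).

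I would first invoke Fact~\ref{t:limitp} to translate requirement (a) into the dichotomy $\rho(A)<1$ versus $\rho(A)=1$ with $\lm=1$ semisimple and the only unit-modulus eigenvalue. These two spectral pictures supply exactly the first clauses of conditions (i) and (ii) in the statement. Next, assuming $A$ is convergent, Theorem~\ref{t:linear} guarantees that $\gg(A)=\rho(A-A^\infty)<1$ and that every convergence rate $\mu$ lies in $[\gg(A),1)$; therefore, whenever $\gg(A)$ happens to be a convergence rate, it is automatically the optimal one, so (b) reduces to the question of whether $\gg(A)$ is attained as a rate. Applying Theorem~\ref{t:linearII} then shows this is equivalent to semisimpleness of all subdominant eigenvalues, which supplies the second clause in both (i) and (ii).

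Combining these two equivalences in each branch of the dichotomy yields the required characterization: $A$ attains the optimal convergence rate $\gg(A)$ iff one of (i), (ii) holds. Both directions follow immediately from the conjunction since the spectral condition for convergence and the semisimpleness condition for optimality involve disjoint portions of $\sigma(A)$ (the eigenvalue $1$ on one hand, and the subdominant eigenvalues on the other), so no interaction between the two clauses needs to be checked.

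There is essentially no obstacle, since the theorem is a consolidation rather than a new result; the only minor care point is the edge case $\gg(A)=0$, i.e., $\sigma(A)\setminus\{1\}\subseteq\{0\}$, where semisimpleness of ``subdominant'' eigenvalues is either vacuous or handled as in the opening paragraph of the proof of Theorem~\ref{t:linearII}. In that degenerate case the matrix $A$ equals $A^\infty$ after finitely many steps and $\gg(A)=0$ is trivially the optimal rate, so the equivalence still holds.
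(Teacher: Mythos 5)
Your proposal is correct and follows essentially the same route as the paper: invoke Fact~\ref{t:limitp} for the dichotomy characterizing convergence, Theorem~\ref{t:linear} to pin down $\gg(A)$ as the infimum of all rates, and Theorem~\ref{t:linearII} for the semisimpleness criterion that makes $\gg(A)$ attainable. Your treatment is actually somewhat more explicit than the paper's two-sentence consolidation, in particular in spelling out the degenerate case $\gg(A)=0$, but the underlying argument is the same.
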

\begin{proof}
{If $A$ is convergent with the optimal convergence rate,  Theorem~\ref{t:linear} tells us that $\gg(A)$ is the optimal convergence rate. Moreover, {\bf (i)} and {\bf (ii)} follow from Fact~\ref{t:limitp} and  Theorem~\ref{t:linearII}. Conversely,  if {\bf (i)} and {\bf (ii)} hold, we also get from Fact~\ref{t:limitp} and  Theorem~\ref{t:linearII} that $A$ is convergent with the optimal rate $\gg(A)$.}
\end{proof}

 \begin{theorem}\label{normalI} Let $A\in\CC^{n\times n}$ be  convergent  to $A^\infty$. Then we have
\begin{eqnarray}\label{i-norm}
 \|A^k-A^\infty\|\le \|A-A^\infty\|^k
 \end{eqnarray}
 and thus $ \gg(A)\le \|A-A^\infty\|$. Furthermore, if $A$ is normal then we have
 \begin{eqnarray}\label{p-norm}
 \|A^k-A^\infty\|=\|A-A^\infty\|^k
 \end{eqnarray}
and $ \gg(A)=\|A-A^\infty\|$ is the optimal convergence rate of $A$.
  \end{theorem}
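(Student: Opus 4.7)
The plan is to leverage the identity $(A-A^\infty)^k = A^k - A^\infty$ from equation \eqref{e:power} in Theorem~\ref{t:linear}, which converts powers of the displaced matrix $A-A^\infty$ into the quantities we wish to estimate. For the first inequality \eqref{i-norm} I would simply apply submultiplicativity of the operator norm to $(A-A^\infty)^k$, obtaining $\|A^k-A^\infty\| = \|(A-A^\infty)^k\| \le \|A-A^\infty\|^k$. The bound $\gg(A) \le \|A-A^\infty\|$ then follows by combining the identity $\gg(A) = \rho(A-A^\infty)$ (also from Theorem~\ref{t:linear}) with the standard spectral-radius inequality $\rho(B) \le \|B\|$, which itself is an immediate consequence of Fact~\ref{spec-for}.

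For the normal case, my plan is to show that $B := A - A^\infty$ is itself normal, because for normal matrices the spectral radius coincides with the operator norm (so $\|B^k\| = \rho(B^k) = \rho(B)^k = \|B\|^k$), and this will yield both \eqref{p-norm} and the equality $\gg(A) = \|A-A^\infty\|$. To see that $B$ is normal, I would first invoke Corollary~\ref{c:non}(ii) to conclude $A^\infty = P_{\Fix A}$, which is self-adjoint. Next, since $A^\infty$ is a limit of polynomials in $A$, it commutes with $A$, and combined with $(A^\infty)^* = A^\infty$ it also commutes with $A^*$. A short calculation then shows that $BB^* = B^*B$, using also $AA^* = A^*A$ and $(A^\infty)^2 = A^\infty$.

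Finally, to argue that $\gg(A)$ is the optimal convergence rate under normality, I would appeal to Theorem~\ref{t:linearII}: every normal matrix is unitarily diagonalizable, so every eigenvalue (in particular every subdominant eigenvalue) is semisimple, and the theorem directly delivers optimality.

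The main conceptual step is the normality of $A-A^\infty$; the rest is essentially packaging. The one point requiring a small amount of care is verifying that $A^\infty$ commutes with $A^*$, since this relies on the orthogonality of the projector $A^\infty = P_{\Fix A}$ that normality provides, not merely on the algebraic description in \eqref{Q}.
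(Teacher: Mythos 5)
Your proof is correct, and the first part (the inequality \eqref{i-norm} and $\gg(A)\le\|A-A^\infty\|$) matches the paper's exactly. The interesting divergence is in how you show that $B:=A-A^\infty$ is normal. You argue algebraically: $A^\infty=P_{\Fix A}$ is self-adjoint by Corollary~\ref{c:non}(ii); $A^\infty$ commutes with $A$ because it is the limit of the powers $A^k$; taking adjoints of $A^\infty A=AA^\infty$ and using $(A^\infty)^*=A^\infty$ gives $A^*A^\infty=A^\infty A^*$; then a short expansion yields $BB^*-B^*B=(AA^*-A^*A)-(AA^\infty-A^\infty A)-(A^\infty A^*-A^*A^\infty)=0$. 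The paper instead diagonalizes: since $A$ is normal it writes $A=PJP^*$ with $P$ unitary and $J$ diagonal, computes $A^\infty=P\,\mathrm{diag}(I_r,0)\,P^*$ explicitly, and reads off that $A-A^\infty$ is unitarily diagonalized (hence normal) with $\|(A-A^\infty)^k\|=|\lambda_{r+1}|^k=\gg(A)^k$ directly. Your route is more structural and avoids writing down an eigenbasis, but it leans on the standard identity $\|B\|=\rho(B)$ for normal $B$ (and $\rho(B^k)=\rho(B)^k$), which the paper sidesteps by reading the norm off the diagonal form. Your final appeal to Theorem~\ref{t:linearII} for optimality is valid but slightly roundabout: once \eqref{p-norm} holds, $\gg(A)$ is a convergence rate with constant $M=1$, and Theorem~\ref{t:linear}(ii) already makes it optimal. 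Both arguments are sound; the paper's is more self-contained, yours is shorter once the normal-matrix norm fact is granted.
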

\begin{proof}
First, observe from \eqref{e:power} in Theorem~\ref{t:linear} that
\[
\|A^k-A^\infty\|=\|(A-A^\infty)^k\|\le \|A-A^\infty\|^k,
\]
which together with \eqref{SRF} for $A-A^\infty$  clearly ensures \eqref{i-norm} and thus $ \gg(A)=\rho(A-A^\infty)\le \|A-A^\infty\|$ by Theorem~\ref{t:linear}.

To justify the second part, suppose that $A$ is convergent and normal.  We claim that $A-A^\infty$ is also normal. This is trivial when $A^\infty=0$. It remains to take into account the case $A^\infty\neq 0$.
Since $A$ is normal, we can find  a diagonal matrix $J={\rm diag}\,(\lm_1,\lm_2,\ldots,\lm_n)$ with $|\lm_1|\ge\ldots\ge|\lm_n|$ and a unitary  matrix $P$ such that $A=PJP^*$. Fact~\ref{t:limitp} tells us that $1\in \sigma(A)$ and $1=\lm_1=\ldots=\lm_r>|\lm_{r+1}|\ge \ldots\ge|\lm_n|$  for some $r\in\NN$. It follows that
\begin{equation}\label{e:Ainf}
A^\infty=\lim_{k\to\infty}A^k=P\begin{pmatrix} I_r & 0\\0 & 0\end{pmatrix}P^*.
\end{equation}
Hence we obtain
\[
A-A^\infty=P\begin{pmatrix}
0_{r\times r} & 0 & \cdots & 0\\
0 & \lm_{r+1} & \cdots & 0\\
\vdots & \vdots & \ddots & \vdots\\
0 & 0 & \cdots & \lm_n
\end{pmatrix}P^{*},
\]
which is a normal matrix. The latter formula together with \eqref{e:power} also gives us that
\[
\|A^k-A^\infty\|=\|(A-A^\infty)^k\|=\|A-A^\infty\|^k=|\lm_{r+1}|^k=[\rho(A-A^\infty)]^k=\gg(A)^k,
\]
which ensures \eqref{p-norm} and completes the proof of the theorem.
\end{proof}

\section{Applications to relaxed alternating projection and generalized Douglas-Rachford methods}
In this section, using results in Section 2 and principal angles between two subspaces, we will analyze convergence rates of relaxed alternating projections and generalized Douglas-Rachford methods for two subspaces comprehensively.  Throughout the section we suppose that $U$ and $V$ are two subspaces of $\RR^n$ with $ 1\le p:=\dim U\le \dim V:=q\le n-1$. Note that the whole section will be not interesting if $\dim U=0$ or $\dim V=n$.  Let us recall the principal angles  and the Friedrichs angles between $U$ and $V$ as follows, which are crucial for our quantitative analysis of convergence rates.

\begin{definition}{\bf (principal angles)}\label{d:PA}\emph{(\cite{BG}, \cite[page 456]{Meyer})} The principal angles $\theta_k\in[0,\frac{\pi}{2}]$, $k=1,\ldots,p$ between $U$ and $V$ are defined by
\begin{eqnarray}\label{pa2}
\begin{array}{ll}
\qquad\cos\theta_k &:=\la u_k,v_k\ra\\
& =\max\Big\{\la u,v\ra\Big|\begin{array}{ll}&u\in U, v\in V, \|u\|=\|v\|=1,\\
& \la u,u_j\ra=\la v,v_j\ra=0, j=1, \ldots, k-1\end{array}\Big\}\quad \mbox{with}\quad u_0=v_0:=0.
\end{array}
\end{eqnarray}
\end{definition}
It is worth mentioning that the vectors $u_k,v_k$ are not uniquely defined, but the principal angles $\theta_k$ are unique with $0\le \theta_1\le\theta_2\le\cdots\le \theta_p\le \frac{\pi}{2}$; see \cite[page 456]{Meyer}).

\begin{definition}{\bf (Friedrichs angle)} \label{d:F}The \emph{cosine of the Friedrichs angle} $\theta_F\in(0,\frac{\pi}{2}]$ between $U$ and $V$ is
\begin{equation}\label{fa}
c_F(U,V):=\max\Big\{\la u,v\ra\big|\;u\in U\cap(U\cap V)^\perp, v\in V\cap(U\cap V)^\perp, \|u\|=\|v\|=1\Big\}.
\end{equation}
\end{definition}
In the following proposition we show that the Friedrichs angle is exactly the $(s+1)$-th principal angle $\theta_{s+1}$ where $s:=\dim(U\cap V)$.
\begin{proposition}{\bf(principal angles and Friedrichs angle)}\label{p:PAF} Let  $s:=\dim (U\cap V)$. Then we have $\theta_k=0$ for $k=1,\ldots,s$ and $\theta_{s+1}=\theta_F>0$.
\end{proposition}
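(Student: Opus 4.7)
The plan is to carry out the recursive maximization in Definition~\ref{d:PA} through its first $s$ steps using a convenient orthonormal basis of $U\cap V$, and then to observe that the $(s+1)$-th step reduces to the Friedrichs-angle maximization. Concretely, fix an orthonormal basis $\{w_1,\ldots,w_s\}$ of $U\cap V$; for each $k\le s$, the choice $u_k=v_k=w_k$ is admissible in \eqref{pa2} since $w_k$ lies in $U\cap V$, has unit norm, and is orthogonal to $w_1,\ldots,w_{k-1}$, and it achieves $\cos\theta_k=\langle w_k,w_k\rangle=1$, which is maximal. Hence $\theta_k=0$ for $k=1,\ldots,s$.

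At step $k=s+1$, the orthogonality constraints inherited from the previous steps become $\langle u,w_j\rangle=0=\langle v,w_j\rangle$ for $j=1,\ldots,s$. Since $\{w_j\}$ spans $U\cap V$, these are equivalent to $u,v\in(U\cap V)^\perp$, so together with $u\in U$, $v\in V$, $\|u\|=\|v\|=1$ the maximization defining $\cos\theta_{s+1}$ becomes precisely the one defining $c_F(U,V)$ in \eqref{fa}. Because the value of a principal angle does not depend on the particular choice of vectors realizing the earlier maxima, this identification yields $\theta_{s+1}=\theta_F$. For positivity, suppose for contradiction that $c_F(U,V)=1$: then there are unit vectors $u\in U\cap(U\cap V)^\perp$ and $v\in V\cap(U\cap V)^\perp$ with $\langle u,v\rangle=1$, and the equality case of Cauchy--Schwarz forces $u=v\in U\cap V\cap(U\cap V)^\perp=\{0\}$, contradicting $\|u\|=1$. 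Therefore $\theta_F>0$.

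The only conceptually delicate point is the ``freedom'' argument used in the second paragraph: one must justify that fixing $u_j=v_j=w_j$ for $j\le s$ (rather than some other admissible realization of the first $s$ maxima) does not change the resulting value $\cos\theta_{s+1}$. This can be verified by observing that any admissible choice at step $k\le s$ must satisfy $u_k=v_k$ in order to attain $\cos\theta_k=1$, and this common vector must lie in $U\cap V$, so every admissible sequence $(u_j,v_j)_{j\le s}$ produces an orthonormal basis of $U\cap V$ and hence yields the same constraint set at step $s+1$. Everything else reduces to standard linear algebra and the equality case of Cauchy--Schwarz. A minor edge case is $s=p$, where $U\subseteq V$ and no $(s+1)$-th principal angle exists as such; the statement implicitly restricts to $s<p$.
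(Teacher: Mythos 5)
Your proof is correct and follows essentially the same route as the paper's: realize the first $s$ steps of the principal-angle recursion with an orthonormal basis of $U\cap V$, then observe that the $(s+1)$-th maximization reduces to the Friedrichs-angle maximization over $(U\cap V)^\perp$. You additionally supply details the paper leaves implicit — the well-definedness argument that any admissible realization of the first $s$ maxima forces an orthonormal basis of $U\cap V$ (and hence the same constraint set at step $s+1$), the Cauchy–Schwarz argument for $\theta_F>0$, and the edge case $s=p$ — all of which are sound.
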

\begin{proof}
Let $x_1,\ldots,x_s$ be an orthonormal basis of the subspace $U\cap V$. We may choose $u_k=v_k=x_k$, $k=1,\ldots,s$ from  \eqref{pa2}. It follows that $\cos\theta_k=\la x_k,x_k\ra=1$  and thus $\theta_k=0$ for all $k=1, \ldots, s$. Moreover, since ${\rm span}\,\{u_1,\ldots, u_s\}={\rm span}\,\{v_1,\ldots,v_s\}=U\cap V$, we obtain from \eqref{pa2} that
\begin{align}
\cos\theta_{s+1} &=\max\big\{\la u,v\ra\big|\;u\in U, v\in V, \|u\|=\|v\|=1, u,v\in (U\cap V)^\perp \big\}.\label{pa3}\end{align}
This together with \eqref{fa} tells us that $\theta_{s+1}=\theta_F$.  The proof is complete.
\end{proof}

The following result follows the idea of \cite{BG,DZ} to construct the orthogonal projections $P_U$ and $P_V$ with the appearance of the principal angles.

\begin{proposition}{\bf(principal angles and orthogonal projections)}\label{p:main} Suppose further that $p+q< n$. Then we may find a orthogonal  matrix $D\in \RR^{n\times n}$ such that
\begin{equation}\label{e:proj}
P_U=D\begin{pmatrix}
I_p &0&0&0\\
0 &0_p &0&0\\
0 &0 & 0_{q-p} &0\\
0 &0 & 0 &0_{n-p-q}
\end{pmatrix} D^*
\qquad\mbox{and}\qquad P_V=D\begin{pmatrix}
C^2 & CS&0&0\\
CS & S^2 & 0&0\\
 0 & 0 & I_{q-p}&0 \\
 0 & 0 & 0&0_{n-p-q}
\end{pmatrix} D^*,
\end{equation}
where $C$ and $S$ are two $p\times p$ diagonal matrices defined by
\begin{eqnarray}\label{e:diag}
C:={\rm diag}\,\big(\cos\theta_1,\ldots,\cos\theta_p\big)\qquad \mbox{and}\qquad S:={\rm diag}\,\big(\sin\theta_1,\ldots,\sin\theta_p\big)
\end{eqnarray}
with the principal angles $\theta_1,\ldots, \theta_p$ between $U$ and $V$ found in {\rm Definition~\ref{d:PA}}. Consequently, we have
\begin{align}\label{e:PUV}
P_UP_V=D\begin{pmatrix}
C^2 &CS&0&0\\
0 &0_p &0&0\\
0 &0 & 0_{q-p}&0\\
0&0 &0 & 0_{n-p-q}
\end{pmatrix} D^*\; \mbox{and}\;  P_{U^\perp}P_{V^\perp}=D\begin{pmatrix}
0_p & 0 &0 &0\\
-CS &C^2 &0&0\\
0& 0 &0_{q-p} &0\\
0 &0 & 0& I_{n-p-q}
\end{pmatrix} D^*.
\end{align}
Furthermore, the orthogonal projection $P_{U\cap V}$ is computed by
\begin{equation}\label{e:UV}
P_{U\cap V}=D\begin{pmatrix}
I_s &0\\
 0 &0_{n-s}
\end{pmatrix} D^*\qquad\mbox{with}\qquad s:=\dim(U\cap V).
\end{equation}

\end{proposition}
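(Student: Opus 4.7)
The strategy is to construct the orthogonal matrix $D$ from an orthonormal basis of $\RR^n$ adapted to the pair $(U,V)$ via the principal-angle (CS) decomposition, and then verify all the displayed formulas by block arithmetic in the $D$-basis.

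First I invoke Definition~\ref{d:PA} to obtain orthonormal vectors $u_1,\dots,u_p$ in $U$ and $v_1,\dots,v_p$ in $V$ satisfying the biorthogonality $\la u_i,v_j\ra=\cos\theta_i\delta_{ij}$. This biorthogonality is the standard principal-angle property: it amounts to diagonalizing the self-adjoint operator $P_UP_VP_U|_U$, whose eigenvalues are $\cos^2\theta_i$ and whose eigenvectors are the $u_i$. Extend $\{v_1,\dots,v_p\}$ to an orthonormal basis $v_1,\dots,v_q$ of $V$; the additional $v_{p+1},\dots,v_q$ lie in $V\cap U^\perp$ because they are precisely the right singular vectors of $P_U|_V:V\to U$ with zero singular value. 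For each $i\leq p$ with $\sin\theta_i>0$, set $w_i:=(v_i-\cos\theta_iu_i)/\sin\theta_i$; a short inner-product computation using the biorthogonality shows $w_i\in U^\perp$, $\|w_i\|=1$, $\la w_i,w_j\ra=\delta_{ij}$, $\la w_i,v_{p+k}\ra=0$, and the identity $v_i=\cos\theta_iu_i+\sin\theta_iw_i$. For the remaining indices $\sin\theta_i=0$ (so $v_i=u_i$), complete the family $w_1,\dots,w_p$ to an orthonormal set in $U^\perp$ that is also orthogonal to $v_{p+1},\dots,v_q$; since $\dim U^\perp=n-p\geq q+1$ by the hypothesis $p+q<n$, there is enough room. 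Finally, since $(U+V)^\perp$ has positive dimension $n-p-q$, pick any orthonormal basis $z_1,\dots,z_{n-p-q}$ of it, and define $D$ to be the orthogonal matrix whose columns, in order, are $u_1,\dots,u_p,\;w_1,\dots,w_p,\;v_{p+1},\dots,v_q,\;z_1,\dots,z_{n-p-q}$.

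The remainder is block arithmetic. From $P_U=\sum_{i=1}^p u_iu_i^*$ the formula for $P_U$ in \eqref{e:proj} is immediate. For $P_V=\sum_{i=1}^q v_iv_i^*$, substitute $v_i=\cos\theta_iu_i+\sin\theta_iw_i$ for $i\leq p$ and note that $v_{p+j}$ is the $(2p+j)$-th column of $D$; expanding the outer products yields exactly the $2\times 2$ block pattern with entries $C^2,CS,S^2$ together with the $I_{q-p}$ middle block. The formula for $P_UP_V$ in \eqref{e:PUV} then follows by multiplying the two block matrices, and $P_{U^\perp}P_{V^\perp}=(I-P_U)(I-P_V)$ by the same multiplication after using $I_p-C^2=S^2$ and $I_p-S^2=C^2$. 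For \eqref{e:UV}, a vector $x=\sum a_i d_i$ lies in $U\cap V$ iff $P_Ux=P_Vx=x$; these two conditions force $a_i=0$ for $i>p$ and, for $i\leq p$, $a_ic_is_i=0$ together with $a_i(c_i^2-1)=0$, leaving $a_i$ free only when $\theta_i=0$. Proposition~\ref{p:PAF} identifies these indices as $\{1,\dots,s\}$, producing the $I_s$ top-left block.

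The main obstacle is the biorthogonality of the principal-angle vectors together with the characterization of $\ker(P_U|_V)$ as $V\cap U^\perp$; both are classical CS/SVD decomposition facts but are the only nonelementary inputs to the argument. Once these are granted, everything else reduces to orthonormal-basis construction and $2\times 2$ block multiplication with diagonal matrices.
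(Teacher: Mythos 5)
Your construction is correct in spirit but takes a genuinely different route from the paper. The paper starts from orthonormal basis matrices $Q_U$, $Q_{U^\perp}$, $Q_V$, invokes Bj\"orck--Golub for the SVD $Q_U^*Q_V=ACB^*$, derives from it the SVD of $Q_{U^\perp}^*Q_V$, and then assembles $D=(Q_UA,\;Q_{U^\perp}A_1,\;D_3)$; the block formulas fall out of matrix multiplication. You instead build $D$ column-by-column from the principal vectors $u_i,v_i$, using the biorthogonality $\langle u_i,v_j\rangle=\cos\theta_i\,\delta_{ij}$ and the Gram--Schmidt-type construction $w_i=(v_i-\cos\theta_i\,u_i)/\sin\theta_i$, then do the same block arithmetic. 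Your route is more elementary and geometrically transparent (one sees exactly which direction each column represents), whereas the paper's is more compact because it outsources the heavy lifting to the SVD package. You also obtain \eqref{e:UV} by directly solving $P_Ux=P_Vx=x$ in the $D$-coordinates, whereas the paper computes $\lim_k(P_UP_V)^k$ and invokes Corollary~\ref{c:non}; both are fine.

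There is, however, a small but real slip in the dimension count at the end. When $s=\dim(U\cap V)>0$ one has $\dim(U+V)^\perp=n-\dim(U+V)=n-(p+q-s)=n-p-q+s$, not $n-p-q$, so ``any orthonormal basis $z_1,\dots,z_{n-p-q}$ of $(U+V)^\perp$'' is not a basis. Relatedly, the $s$ extra vectors $w_1,\dots,w_s$ you append (for the indices with $\sin\theta_i=0$) must also be chosen orthogonal to the $z_j$'s, which your description does not guarantee. The clean fix is to note that $u_1,\dots,u_p,\,w_{s+1},\dots,w_p,\,v_{p+1},\dots,v_q$ is already an orthonormal basis of $U+V$, so the remaining $n-p-q+s$ columns of $D$ must come from any orthonormal basis of $(U+V)^\perp$: take $s$ of them to play the role of $w_1,\dots,w_s$ and the other $n-p-q$ to be $z_1,\dots,z_{n-p-q}$. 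With this correction every orthogonality relation you need holds, and the remainder of your argument goes through unchanged.
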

\begin{proof} Let $Q_U\in \RR^{n\times p}$, $Q_{U^\perp}\in \RR^{n\times(n-p)}$ and $Q_V\in \RR^{n\times q}$ be three matrices such that their columns form three orthonormal bases for $U, U^\perp$ and $V$, respectively. It follows from \cite[page 430]{Meyer} that $P_U=Q_UQ_U^*$, $I-P_U=P_{U^\perp}=Q_{U^\perp}Q_{U^\perp}^*$ and $P_V=Q_VQ_V^*$. Furthermore, by \cite[Theorem~1]{BG} we have the {\em Singular Valued Decomposition} (SVD) of the $p\times q$ matrix $Q_U^*Q_V$ is
\begin{eqnarray}\label{e:SVD}
Q_U^*Q_V=ACB^* \quad\mbox{with}\quad C={\rm diag}\,(\cos\theta_1,\ldots,\cos\theta_p)\in \RR^{p\times p},
\end{eqnarray}
where  $A\in \RR^{p\times p}$ and $B\in \RR^{q\times p}$  satisfy $AA^*=A^*A=B^*B=I_p$. Since all $p$ columns of $B$ are orthonormal and $p\le q$, we may find a $q\times(q-p)$ matrix $B^\prime$ such that $\widetilde{B}:=(B,B^\prime)\in \RR^{q\times q}$ is orthogonal. Define further that  $D_1:=Q_UA\in \RR^{n\times p}$, we have $D^*_1D_1=A^*Q_U^*Q_UA=A^*A=I_p$. Note further from \eqref{e:SVD} that
\begin{eqnarray}\label{e:SVD1}
P_UQ_V=Q_UQ_U^*Q_V=Q_UAC^*B=D_1CB^*.
\end{eqnarray}
Moreover, we get from \eqref{e:SVD} that
\begin{subequations}
\begin{align*}
[Q^*_{U^\perp}Q_V]^*[Q_{U^\perp}^*Q_V]&=Q_V^*Q_{U^\perp}Q^*_{U^\perp}Q_V=Q_V^*(\Id-P_U)Q_V=\Id-Q_V^*P_UQ_V\\
&=\Id-Q^*_VQ_UQ^*_U
Q_V=\Id-(ACB^*)^*(ACB^*)=\Id-BCA^*ACB^*\\
&=\Id-BC^2B^*=\overline{B}\, \overline {B}^*- \widetilde{B}\begin{pmatrix} C^2 &0\\
0 & 0_{q-p}\end{pmatrix} \widetilde{B}^*= \widetilde{B}\begin{pmatrix} I_p-C^2 &0\\
0 & I_{q-p}\end{pmatrix} \widetilde{B}^*\\
&=\widetilde{B}\begin{pmatrix} S^2 &0\\
0 & I_{q-p}\end{pmatrix} \widetilde{B}^*.
\end{align*}
\end{subequations}
Hence the columns of $\widetilde{B}$ are eigenvectors of $[Q^*_{U^\perp}Q_V]^*[Q_{U^\perp}^*Q_V]$. It follows that the SVD of $Q^*_{U^\perp}Q_V$ has the form
\begin{equation}\label{e:SVD2}
Q^*_{U^\perp}Q_V=A_1\begin{pmatrix}S& 0\\ 0 &I_{q-p} \end{pmatrix}\widetilde{B}^*
\end{equation}
for some $A_1\in \RR^{(n-p)\times q}$ with $A_1^*A_1=I_q$.  Define $D_2:=Q_{U^\perp}A_1\in \RR^{n\times q}$, we have $D_2^*D_2=A_1^*Q^*_{U^\perp}Q_{U^\perp}A_1=A_1^*A_1=I_q$. Moreover, it follows from \eqref{e:SVD2} that
\begin{equation}\label{e:SVD3}
(I-P_U)Q_V=Q_{U^\perp}Q^*_{U^\perp}Q_V=D_2\begin{pmatrix}S& 0\\ 0 &I_{q-p} \end{pmatrix}\widetilde{B}^*.
\end{equation}
Note further that $D_1^*D_2=A^*Q_U^*Q_{U^\perp}A_1=0$, since the columns of $Q_U, Q_{U^\perp}$ are two basis of $U$ and $U^\perp$, respectively. Thus there is  an $n\times (n-p-q)$ matrix $D_3$ such that $D:=(D_1, D_2, D_3)\in \RR^{n\times n}$ is  orthogonal . Combining \eqref{e:SVD1} and \eqref{e:SVD3} gives us that
\[
Q_V=D_1CB^*+D_2\begin{pmatrix}S& 0\\ 0 &I_{q-p} \end{pmatrix}\widetilde{B}^*=D_1\begin{pmatrix}C&0_{p\times (q-p)}\end{pmatrix}\widetilde{B}^*+D_2\begin{pmatrix}S& 0\\ 0 &I_{q-p} \end{pmatrix}\widetilde{B}^*.
\]
Hence we have
\begin{subequations}
\begin{align*}
P_V&=Q_VQ_V^*=\left[D_1\begin{pmatrix}C&0_{p\times (q-p)}\end{pmatrix}\widetilde{B}^*+D_2\begin{pmatrix}S& 0\\ 0 &I_{q-p} \end{pmatrix}\widetilde{B}^*\right]\cdot\left[\widetilde{B}\begin{pmatrix}C\\0_{(q-p)\times p}\end{pmatrix}D_1^*+\widetilde{B}\begin{pmatrix}S& 0\\ 0 &I_{q-p} \end{pmatrix}D_2^*\right]\\
&=D_1C^2D_1^*+D_1\begin{pmatrix}CS&0_{p\times (q-p)}\end{pmatrix}D_2^*+D_2\begin{pmatrix}SC\\ 0_{(q-p)\times p}\end{pmatrix}D^*_1+D_2\begin{pmatrix}S^2&0\\ 0&I_{q-p}\end{pmatrix}D^*_2\\
&=D\begin{pmatrix}
C^2 & CS&0&0\\
CS & S^2 & 0&0\\
 0 & 0 & I_{q-p}&0 \\
 0 & 0 & 0&0_{n-p-q}
\end{pmatrix} D^*,
\end{align*}
\end{subequations}
which ensures the the second part of \eqref{e:proj}. Note further that $D_1D_1^*=Q_UA(Q_UA)^*=Q_UAA^*Q_U^*=Q_UQ_U^*=P_U$. It follows that
\begin{subequations}
\begin{align*}
P_U&=D\begin{pmatrix}
I_p &0&0&0\\
0 &0_p &0&0\\
0 &0 & 0_{q-p} &0\\
0 &0 & 0 &0_{n-p-q}
\end{pmatrix} D^*,
\end{align*}
\end{subequations}
which verifies \eqref{e:proj}. The formulas of $P_UP_V$ and $P_{U^\perp}P_{V^\perp}=(\Id-P_U)(\Id-P_V)$ in \eqref{e:PUV} can be derived easily from \eqref{e:proj}. It remains to establish \eqref{e:UV}.
Observe from \eqref{e:PUV} and Proposition~\ref{p:PAF} that
\begin{align*}
(P_UP_V)^k&=D\begin{pmatrix}
C^{2k} &C^{2(k-1)}CS&0\\
0 &0_p &0\\
0 &0 & 0_{n-2p}
\end{pmatrix} D^*\longrightarrow D\begin{pmatrix}
I_s &0\\
0 &0_{n-s}
\end{pmatrix} D^* \quad \mbox{as}\quad k\to \infty.
\end{align*}
Note further that $\Fix(P_UP_V)=U\cap V=\Fix(P_VP_U)$; see, e.g.,  \cite[Lemma~2.4]{BDHP}. Combining this with \eqref{e:UV} and Corollary~\ref{c:non} tells us that $P_{U\cap V}=P_{\Fix(P_UP_V)}= D\begin{pmatrix}
I_s &0\\
0 &0_{n-s}
\end{pmatrix} D^*$. \end{proof}

\begin{remark} When $p+q<n$, observe from \eqref{e:PUV}, \eqref{fa}, and Proposition~\ref{p:PAF} that $\gg(P_UP_V)=\gg(P_{U^\perp}P_{V^\perp})=c^2_F(U,V)$. These equalities is also true when $p+q\ge n$ by applying the trick used in Case 2 in the proof of Theorem~3.6. It follows that  $c_F(U,V)=c_F(U^\perp,V^\perp)$ by replacing $U,V$ by $U^\perp,V^\perp$, respectively. This equality is known as Solmon's formula; see \cite[Theorem~16]{Maratea} and also \cite[Theorem 3]{miao} for different proofs.
\end{remark}

\subsection{Convergence rate of relaxed alternating projection methods}
Throughout this subsection let us denote the classical \emph{alternating projection} mapping by $T:=P_UP_V$, which is well-known to be convergent to $P_{U\cap V}$ with the linear rate $c^2_F(U,V)=\cos^2\theta_{s+1}$ with $s=\dim(U\cap V)$; see \cite{Maratea,KW}. We will study some relaxations of this operator and show that a better optimal rate can be obtained. We say  the {\em relaxed alternating projection mapping} defined by
\begin{equation}\label{Tmu}
T_\mu:=(1-\mu)\Id+\mu P_UP_V\quad \mbox{with}\quad \mu\in \RR.
\end{equation}
 It is worth noting that the case $\mu=0$ is not interesting, since $T_0=\Id$ is the identity map. Let us analyze the convergence of $T_\mu$ in the following result mainly for the case $\mu\neq 0$. When $\mu=1$, it recovers the classical result aforementioned.

 \begin{theorem}[relaxed alternating projection] \label{t:relaxedI} Let $\theta_{s+1}=\theta_F$ be defined in {\rm Proposition~\ref{p:PAF}} with $s=\dim(U\cap V)$.  Then the mapping $T_\mu=(1-\mu)\Id+\mu P_UP_V$, $\mu\in \RR$ is convergent if and only if $\mu\in [0,2)$. Moreover, the following assertions hold:

  {\bf (i)} If $\mu\in (0,\frac{2}{1+\sin^2 \theta_{s+1}}]$, then $T_\mu$ is convergent to $P_{U\cap V}$ with the optimal rate $\gg(T_\mu)=1-\mu\sin^2\theta_{s+1}$.

{\bf (ii)} If $\mu\in (\frac{2}{1+\sin^2 \theta_{s+1}}, 2)$, then $T_\mu$ is convergent to $P_{U\cap V}$ with the optimal rate $\gg(T_\mu)=\mu-1$.

Consequently, when $\mu\neq 0$, $T_\mu$ is convergent to $P_{U\cap V}$ with rate smaller than $\cos^2\theta_{s+1}$ if and only if $\mu\in (1,2-\sin^2\theta_{s+1})$. Furthermore, $T_\mu$ attains the smallest convergence rate $\frac{1-\sin^2\theta_{s+1}}{1+\sin^2\theta_{s+1}}$ at $\mu= \frac{2}{1+\sin^2 \theta_{s+1}}$.
  \end{theorem}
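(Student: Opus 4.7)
The plan is to reduce everything to the spectral structure of $T_\mu$ provided by Proposition~\ref{p:main}. Assuming for concreteness $p+q<n$ (the case $p+q\geq n$ is handled by the same padding trick flagged in the remark after Proposition~\ref{p:main}), Proposition~\ref{p:main} gives $T_\mu=D\bigl((1-\mu)I_n+\mu M\bigr)D^*$ where $M$ is block upper triangular with diagonal blocks $C^2,\,0_p,\,0_{q-p},\,0_{n-p-q}$ and a single off-diagonal block $CS$. Reading off eigenvalues block-by-block yields
\begin{equation*}
\sigma(T_\mu)=\{1\}\cup\{1-\mu\sin^2\theta_i:i=s+1,\ldots,p\}\cup\{1-\mu\},
\end{equation*}
where the eigenvalue $1$ has multiplicity at least $s$ because $\theta_1=\cdots=\theta_s=0$ by Proposition~\ref{p:PAF}. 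Moreover, each $2\times 2$ subblock $\bigl(\begin{smallmatrix}\cos^2\theta_i & \cos\theta_i\sin\theta_i\\ 0 & 0\end{smallmatrix}\bigr)$ embedded in $M$ is diagonalizable (explicit eigenvectors $(1,0)^{\!\top}$ and, when $\cos\theta_i>0$, $(-\sin\theta_i/\cos\theta_i,1)^{\!\top}$; the case $\cos\theta_i=0$ is trivial). Hence $M$, $P_UP_V$, and $T_\mu$ are all diagonalizable, so every eigenvalue of $T_\mu$ is semisimple.

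Next I would settle convergence and identify the limit via Fact~\ref{t:limitp}. For $\mu<0$, $1-\mu\sin^2\theta_{s+1}>1$ forces $\rho(T_\mu)>1$; for $\mu\geq 2$, $1-\mu\leq-1$ places a non-unit eigenvalue on or outside the unit circle; for $\mu=0$, $T_0=\Id$ converges trivially; and for $\mu\in(0,2)$ every non-unit eigenvalue lies in the open unit disk (since $\sin^2\theta_i\in(0,1]$ for $i>s$), while $\lambda=1$ is the unique spectral point on the unit circle and is semisimple. To pin down the limit when $\mu\in(0,2)$, I would use that $P_UP_V$ is nonexpansive (product of two orthogonal projections), so Corollary~\ref{c:non}(ii) yields $\ran(P_UP_V-\Id)=(\Fix P_UP_V)^\perp=(U\cap V)^\perp$; since $T_\mu-\Id=\mu(P_UP_V-\Id)$ and $\Fix T_\mu=\Fix(P_UP_V)=U\cap V$, the projector description~\eqref{Q} in Fact~\ref{t:limitp} delivers $T_\mu^\infty=P_{U\cap V}$.

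Finally, Theorem~\ref{t:linearII} hands us the optimal rate as $\gamma(T_\mu)=\max\bigl(\{|1-\mu\sin^2\theta_i|:i>s\}\cup\{|1-\mu|\}\bigr)$. A short convexity/monotonicity check reduces this to two candidates: the function $t\mapsto|1-\mu t|$ on $[\sin^2\theta_{s+1},\sin^2\theta_p]$ attains its maximum at an endpoint, and $|1-\mu|\geq|1-\mu\sin^2\theta_p|$ (split into the cases $\mu\sin^2\theta_p\leq 1$ and $\mu\sin^2\theta_p>1$), so
\begin{equation*}
\gamma(T_\mu)=\max\bigl(1-\mu\sin^2\theta_{s+1},\;|1-\mu|\bigr)\quad\text{for }\mu\in(0,2).
\end{equation*}
Solving $1-\mu\sin^2\theta_{s+1}\geq|1-\mu|$ (handling $\mu\leq 1$ and $\mu>1$ separately) produces the threshold $\mu=2/(1+\sin^2\theta_{s+1})$, yielding (i) and (ii). For the consequences, $1-\mu\sin^2\theta_{s+1}<\cos^2\theta_{s+1}$ is equivalent to $\mu>1$, and $\mu-1<\cos^2\theta_{s+1}$ is equivalent to $\mu<2-\sin^2\theta_{s+1}$, which jointly give $\mu\in(1,2-\sin^2\theta_{s+1})$. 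Since the rate is strictly decreasing on regime (i) and strictly increasing on regime (ii), the global minimum is attained at the common endpoint $\mu=2/(1+\sin^2\theta_{s+1})$ with value $(1-\sin^2\theta_{s+1})/(1+\sin^2\theta_{s+1})$.

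The main technical hurdle is the diagonalizability argument for $M$: the presence of the off-diagonal $CS$ could \emph{a priori} create Jordan structure at the eigenvalue $0$ and thereby sabotage the use of Theorem~\ref{t:linearII}; it is the explicit $2\times 2$ block analysis above that rules this out. A secondary bookkeeping point is the extension past the standing hypothesis $p+q<n$ of Proposition~\ref{p:main}, which is handled by the trick indicated in the remark immediately following that proposition.
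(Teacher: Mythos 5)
Your overall route is sound and parallels the paper's: explicit matrix form from Proposition~\ref{p:main}, read off $\sigma(T_\mu)$, apply Fact~\ref{t:limitp} for the convergence window, identify the limit as $P_{U\cap V}$, use Theorem~\ref{t:linearII} for the optimal rate, and push through $p+q\geq n$ by the padding trick. The genuinely different and more economical step is your diagonalizability argument: by observing that the $2\times 2$ blocks $\bigl(\begin{smallmatrix}\cos^2\theta_i & \cos\theta_i\sin\theta_i\\ 0 & 0\end{smallmatrix}\bigr)$ inside $P_UP_V$ are diagonalizable (distinct eigenvalues when $\cos\theta_i\neq 0$, zero matrix otherwise), you get that $P_UP_V$ and hence $T_\mu=(1-\mu)\Id+\mu P_UP_V$ are diagonalizable for every $\mu$, so \emph{every} eigenvalue is automatically semisimple. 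The paper instead verifies semisimpleness case by case (for $a_\mu$, for $1-\mu$, for $1$) by computing $\ker(T_\mu-\lambda\Id)$ versus $\ker(T_\mu-\lambda\Id)^2$ in each subcase. Your once-and-for-all argument is cleaner and avoids the bookkeeping, and it also dispenses with the paper's need to invoke Example~\ref{diag} in the special subcase $CS=0$.

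One slip deserves correction, though it does not sink the argument: the auxiliary inequality $|1-\mu|\geq|1-\mu\sin^2\theta_p|$ is false in general. For instance, for any $\mu\in(0,1]$ one has $|1-\mu|=1-\mu\leq 1-\mu\sin^2\theta_p=|1-\mu\sin^2\theta_p|$, with strict inequality if $\sin^2\theta_p<1$; the same failure occurs for $1<\mu<\frac{2}{1+\sin^2\theta_p}$. The conclusion $\gamma(T_\mu)=\max\bigl(1-\mu\sin^2\theta_{s+1},\,|1-\mu|\bigr)$ is nonetheless correct, but the right way to get it is to apply the convexity of $t\mapsto|1-\mu t|$ on the full interval $[\sin^2\theta_{s+1},1]$: since every $\sin^2\theta_i$ (for $i>s$) and the value $t=1$ (which produces $|1-\mu|$) lie in that interval, the maximum over all these points is attained at one of the two endpoints $t=\sin^2\theta_{s+1}$ or $t=1$, and the remaining candidate $|1-\mu\sin^2\theta_{s+1}|$ equals $1-\mu\sin^2\theta_{s+1}$ when $\mu\sin^2\theta_{s+1}\leq 1$ and is otherwise dominated by $|1-\mu|$. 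That replaces the incorrect comparison between $|1-\mu|$ and $|1-\mu\sin^2\theta_p|$, and the rest of your threshold computation then goes through unchanged.
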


 \begin{proof} Let us justify the theorem by considering two main cases as follows.

 {\bf Case 1:} $p+q< n$, where $1\le p=\dim U\le q=\dim V\le n-1$.   By Proposition~\ref{p:main},  \eqref{e:proj} and \eqref{e:PUV}, we may find some orthogonal  matrix $D$ such that
 \begin{eqnarray} \label{e:bS}
 \begin{array}{ll}
 T_\mu&\disp=(1-\mu)\Id+\mu P_UP_V\disp=D\begin{pmatrix}
 (1-\mu)I_p+\mu C^2 & \mu CS & 0\\
 0 &(1-\mu) I_p &0\\
 0 & 0 & (1-\mu)I_{n-2p}
 \end{pmatrix}D^* \\
 &\disp=D\begin{pmatrix}
 I_p-\mu S^2 & \mu CS & 0\\
 0 &(1-\mu) I_p &0\\
 0 & 0 & (1-\mu)I_{n-2p}
 \end{pmatrix}D^*. \end{array}
 \end{eqnarray}
It follows that
\begin{equation}\label{e:S10}
\sigma(T_\mu)=\{1-\mu\sin^2\theta_k|\; k=1,\ldots,p\}\cup\{1-\mu\}.
\end{equation}
Suppose first that $T_\mu$ is convergent, we get from Fact~\ref{t:limitp} that  $\rho(T_\mu)\le 1$ and $-1\not\in\sigma(T_\mu)$. Thus we have $|1-\mu|\le 1$ and $-1\neq 1-\mu$, which yield $0\le \mu<2$. Conversely, suppose that  $0\le \mu<2$ and observe from Proposition~\ref{p:PAF} that
\begin{equation*}
1=1-\mu\sin^2\theta_1=\ldots=1-\mu\sin^2\theta_s>1-\mu\sin^2\theta_{s+1}\ge\ldots\ge 1-\mu\sin^2\theta_{p}\ge1-\mu>-1.
\end{equation*}
If $\mu=0$ then $T_\mu=\Id$ is always convergent. If $\mu>0$ and $s=0$, it is clear that $1\notin \sigma(T_\mu)$ by \eqref{e:S10}. Thus  $T_\mu$ is convergent by Fact~\ref{t:limitp}. If $\mu>0$ and $s>0$, we claim that $1\in \sigma(T_\mu)$ is semisimple. Indeed, observe from \eqref{e:bS} that
\begin{equation}\label{n1}
\ker(T_\mu-\Id)=D\begin{pmatrix}\ker (-\mu S^2)\\0_{(n-p)\times 1}\end{pmatrix}= D\begin{pmatrix}
\RR^s\\0_{(n-s)\times 1}\end{pmatrix}.
\end{equation}
Similarly we also have
\begin{equation}\label{n2}
\ker(T_\mu-\Id)^2=D\begin{pmatrix}\ker (-\mu^2 S^4)\\0_{(n-p)\times 1}\end{pmatrix}= D\begin{pmatrix}
\RR^s\\ 0_{(n-s)\times 1}\end{pmatrix}.
\end{equation}
It follows from \eqref{n1} and \eqref{n2} that $1$ is semisimple to $T_\mu$ due to Fact~\ref{f:ss}. This tells that $T_\mu$ is convergent by Fact~\ref{t:limitp}. Thus $T_\mu=(1-\mu)\Id+\mu P_UP_V$, $\mu\in \RR$ is convergent if and only if $\mu\in [0,2)$.

Next let us justify {\bf(i)} and {\bf (ii)} under the assumption that $\mu\in(0,2)$. We claim first that $T_\mu$ is convergent to $P_{U\cap V}$. Indeed, note that
\[
\Fix T_\mu=\ker [\mu(P_UP_V-\Id)]=\ker(P_UP_V-\Id)=\Fix (P_UP_V)=U\cap V.
\]
Furthermore, we  have
\[
\Fix T^*_\mu=\ker [\mu(P_VP_U-\Id)]=\ker(P_VP_U-\Id)=\Fix (P_VP_U)=V\cap U,
\]
which yields in turn the equality $\Fix T_\mu=\Fix T^*_\mu$. By Corollary~\ref{c:non}, the mapping $T_\mu$ is convergent to $P_{U\cap V}$.

Now we justify the quantitative characterizations in {\bf (i)} and {\bf(ii)}.  Observe from \eqref{e:S10} that the subdominant eigenvalue of $T_\mu$ is
\begin{eqnarray}\label{sub2}
\gg(T_\mu)=\max\{|1-\mu\sin^2\theta_{s+1}|,|1-\mu|\}.
\end{eqnarray}
Note also that
\begin{equation}\label{e:m1}
 (1-\mu\sin^2\theta_{s+1})^2-(1-\mu)^2=\mu\cos^2\theta_{s+1}\big[2-\mu(1+\sin^2\theta_{s+1})\big].
\end{equation}

{\em Subcase a:} $\cos^2\theta_{s+1}=0$. Then we have  $\theta_{s+1}=\ldots=\theta_p=\frac{\pi}{2}$ and $\gg(T_\mu)=|1-\mu|$. In this case it is easy to see that $CS=0$ and thus $T_\mu$ is diagonalizable by \eqref{e:bS}. Thanks to Example~\ref{diag} we have $T_\mu$ is convergent with optimal rate $|1-\mu|$. Both {\bf(i)} and {\bf(ii)} are  valid in this case.

{\em Subcase b:} $\cos^2\theta_{s+1}>0$. Let us consider the following three subsubcases:

{\em Subsubcase b1:} $\mu\in (0,\frac{2}{\sin^2 \theta_{s+1}+1})$. Then we have $|1-\mu\sin^2\theta_{s+1}|> |1-\mu|$ by \eqref{e:m1} and thus $\gg(T_\mu)=|1-\mu\sin^2\theta_{s+1}|$. Observe that
\begin{equation}\label{e:m2}
1>a_\mu:=1-\mu\sin^2\theta_{s+1}> 1-\frac{2\sin^2\theta_{s+1}}{1+\sin^2 \theta_{s+1}}=\frac{1-\sin^2\theta_{s+1}}{1+\sin^2 \theta_{s+1}}\ge0.
\end{equation}
Hence we have $\gg(T_\mu)=1-\mu\sin^2\theta_{s+1}$. Suppose further that $\theta_{s+1}=\ldots=\theta_k$ and $\theta_{s+1}\neq\theta_{k+1}$ with some $k\in\{s+1,\ldots,p\}$, we easily check from \eqref{e:bS} that
\begin{equation*}
\ker(T_\mu-a_\mu\Id)=\ker(T_\mu-a_\mu\Id)^2=D\begin{pmatrix}0_{1\times s}\times (\RR^{k-s})^*\times 0_{1\times(n-k)}\end{pmatrix}^*,
\end{equation*}
which shows that $a_\mu$ is semisimple by Fact~\ref{f:ss}. Thanks to Theorem~\ref{t:linearII}, $T_\mu$ is convergent with the optimal rate $a_\mu$.

{\em Subsubcase b2:} $\mu=\frac{2}{1+\sin^2 \theta_{s+1}}>1$. Then we obtain from \eqref{sub2} that
\begin{equation}\label{e:m3}
\gg(T_\mu)=|1-\mu\sin^2\theta_{s+1}|= |1-\mu|=1-\mu\sin^2\theta_{s+1}=\mu-1=\frac{1-\sin^2\theta_{s+1}}{1+\sin^2 \theta_{s+1}}.
\end{equation}
 It is similar to the above subsubcase  that $a_\mu\in \sigma(T_\mu)$ is semisimple. Furthermore, $1-\mu\in \sigma(T_\mu)$ is also semisimple. Indeed, observe that
\[
T_\mu-(1-\mu)\Id=D\begin{pmatrix} \mu C^2 & \mu CS &0\\
0 &0_p &0 \\
0 &0 &0_{n-2p}\end{pmatrix}D^*,\;\;(T_\mu-(1-\mu)\Id)^2=D\begin{pmatrix} \mu^2 C^4 & \mu^2 C^3S &0\\
0 &0_p &0 \\
0 &0 &0_{n-2p}\end{pmatrix}D^*.
\]
By using these two expressions, we may check that
\[
\ker(T_\mu-(1-\mu)\Id)=\ker (T_\mu-(1-\mu)\Id)^2,
\]
which yields that $(1-\mu)$ is also semisimple by Fact~\ref{f:ss}. By Theorem~\ref{t:linearII} again, we obtain that $\frac{1-\sin^2\theta_{s+1}}{1+\sin^2 \theta_{s+1}}$ is the optimal convergent rate of $T_\mu$.

{\em Subsubcase b3:} $\mu>\frac{2}{1+\sin^2 \theta_{s+1}}>1$. It follows from \eqref{e:m1} that  $|1-\mu\sin^2\theta_{s+1}|<|1-\mu|$. And thus we get from \eqref{sub2} that
\begin{equation}\label{e:m4}
\gg(T_\mu)= |1-\mu|=\mu-1>\frac{2}{1+\sin^2 \theta_{s+1}}-1=\frac{1-\sin^2\theta_{s+1}}{1+\sin^2 \theta_{s+1}}.
\end{equation}
Similarly to the above case, $1-\mu\in \sigma(T_\mu)$ is semisimple. Thus Theorem~\ref{t:linearII} tells us that $\mu-1$ is the convergent rate of $T_\mu$ in this subcase.

Combining Subsubcase  b1 and Subsubcase b2  ensures {\bf (i)}, and {\bf (ii)} is exactly the Subsubcase b3. Thus  {\bf (i)} and {\bf (ii)} are verified.

Let us complete the proof by verifying the last part of the theorem. When $\mu\in (0,\frac{2}{1+\sin^2\theta_{s+1}}]$, we have $1-\mu\sin^2\theta_{s+1}<\cos^2\theta_{s+1}$ if and only if $\mu>1$, since $\sin^2\theta_{s+1}>0$ by Proposition~\ref{p:PAF}. Furthermore, when $\mu\in (\frac{2}{1+\sin^2\theta_{s+1}},2)$, we have  $\mu-1<\cos^2\theta_{s+1}$ if and only if $\mu< 1+\cos^2\theta_{s+1}=2-\sin^2\theta_{s+1}$. Combining these two observations with {\bf (i)} and {\bf (ii)} in the theorem tells us that $T_\mu$ is convergent to $P_{U\cap V}$ with a rate smaller than $\cos^2\theta_{s+1}$ if and only if $\mu\in (1,2-\sin^2\theta_{s+1})$. Moreover, the optimal rate $\frac{1-\sin^2\theta_{s+1}}{1+\sin^2 \theta_{s+1}}$ of $T_\mu$ is obtained at $\mu=\frac{2}{1+\sin^2 \theta_{s+1}}$ due to \eqref{e:m2}, \eqref{e:m3}, and \eqref{e:m4}.

{\bf Case 2:} $p+q\ge n$. We may find some $k\in \NN$ such that $n^\prime:=n+k> p+q$. Define $U^\prime:=U\times\{0_k\}\subset \RR^{n^\prime}$,  $V^\prime:=V\times\{0_k\}\subset \RR^{n^\prime}$, and $T^\prime_\mu=(1-\mu)\Id+\mu P_{U^\prime}P_{V^\prime}$. It is clear that $1\le p=\dim U^\prime\le \dim V^\prime=q$ and $p+q< n^\prime$. Observe from Definition~\ref{d:PA} that the principal angles between $U^\prime$ and $V^\prime$ are the same with the ones between $U$ and $V$. Moreover, we have $P_{U^\prime}=\begin{pmatrix}P_U &0\\ 0 &0_k\end{pmatrix}$, $P_{V^\prime}=\begin{pmatrix}P_V &0\\ 0 &0_k\end{pmatrix}$, and thus
\begin{equation}\label{si}
T^\prime_\mu=\begin{pmatrix}T_{\mu}&0\\ 0 & (1-\mu)I_k\end{pmatrix}.
\end{equation}
Since $q\le n-1$, there is some $x\in \RR^n\setminus\{0\}$ such that $P_Vx=0$. It follows that $Tx=0$, and thus we have $0\in \sigma (T)$ and then $1-\mu\in \sigma(T_\mu)$. If $T_\mu$ is convergent, Fact~\ref{t:limitp} tells us that $-1<1-\mu\le 1$, i.e., $\mu\in [0,2)$. Conversely, if $\mu\in [0,2)$ we have $T^\prime_\mu$ is convergent due to Case 1. This together with \eqref{si} ensures that  $T_\mu$ is also convergent. Hence $T_\mu$ is convergent if and only if $\mu\in[0,2)$.

To verify the convergence rate of $T_\mu$, suppose further that $\mu\in (0,2)$. We note that $\sigma(T_\mu)=\sigma (T^\prime_\mu)$, which implies in turn that $\gg(T_\mu)=\gg(T^\prime_\mu)$. It follows from Case 1 that $T^\prime_\mu$ in \eqref{si} is convergent to $P_{U^\prime\cap V^\prime}=\begin{pmatrix}P_{U\cap V}&0\\0&0_k\end{pmatrix}$ with the convergence rate $\gg(T^\prime_\mu)$. This together with \eqref{si} yields
\[
\|T^n_\mu-P_{U\cap V}\|\le \|(T^\prime_\mu)^n-P_{U^\prime\cap V^\prime}\|.
\]
Thus $\gg(T^\prime_\mu)=\gg(T_\mu)$ is the convergence rate of $T_\mu$ by also Theorem~\ref{t:linear}. The analysis of $\gg(T^\prime_\mu)$ in {\bf (i)} and {\bf(ii)} in Case 1 also guides us to verify  {\bf (i)} and {\bf(ii)} for $\gg(T_\mu)$ in Case 2. Hence the proof is complete. \end{proof}


Next we study another kind of relaxation of the the map $T=P_UP_V$, that is
\begin{equation}\label{e:PRAP}
S_\mu:=P_U((1-\mu)\Id+\mu P_V)=(1-\mu)P_U+\mu P_UP_V;
\end{equation}
{see also \cite{Luke} for a similar form,}
 which will give us a better optimal rate. Since the proof is similar to the one of Theorem~\ref{t:relaxedI} above, we only sketch the main steps.

\begin{theorem}[partial relaxed alternating projection]\label{t:relaxedII} The map $S_\mu:=P_U((1-\mu)\Id+\mu P_V)=(1-\mu)P_U+\mu P_U P_V$ is convergent if and only if $\mu\in [0, \frac{2}{\sin^2\theta_p})$ {with the convention $\frac{1}{0}=\infty$}. Moreover, the following assertions hold:

{\bf (i)} If $\mu\in (0,\frac{2}{\sin^2 \theta_{s+1}+\sin^2\theta_p}]$, then $S_\mu$ is convergent to $P_{U\cap V}$ with the optimal convergence rate $\gg(S_\mu)=1-\mu\sin^2\theta_{s+1}$.

{\bf (ii)} If $\mu\in (\frac{2}{\sin^2 \theta_{s+1}+\sin^2\theta_p}, \frac{2}{\sin^2\theta_p})$, then $S_\mu$ is convergent to $P_{U\cap V}$ with the optimal convergence rate $\gg(S_\mu)=\mu\sin^2\theta_{p}-1$.

Consequently, when $\mu\neq 0$, $S_\mu$ is convergent to $P_{U\cap V}$ with the optimal convergence rate smaller than $\cos^2\theta_{s+1}=c_F^2(U,V)$ if and only if $\mu\in (1,\frac{2-\sin^2\theta_{s+1}}{\sin^2\theta_p})$. Furthermore, $S_\mu$ attains the smallest convergence rate $\frac{\sin^2\theta_p-\sin^2\theta_{s+1}}{\sin^2\theta_{s+1}+\sin^2\theta_p}$ at $\mu= \frac{2}{\sin^2 \theta_{s+1}+\sin^2\theta_p}$.
\end{theorem}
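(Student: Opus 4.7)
My plan is to mirror the proof of Theorem~\ref{t:relaxedI} step by step, since the operator $S_\mu=(1-\mu)P_U+\mu P_UP_V$ has essentially the same block structure as $T_\mu$ (only the second diagonal block changes from $(1-\mu)I_p$ to $0_p$, and similarly the trailing blocks become zero). I will first treat the generic case $p+q<n$, then recover the remaining case by the padding trick of Case~2 in Theorem~\ref{t:relaxedI}.

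First I would apply Proposition~\ref{p:main} to write
\[
S_\mu=D\begin{pmatrix}
I_p-\mu S^2 & \mu CS & 0 & 0\\
0 & 0_p & 0 & 0\\
0 & 0 & 0_{q-p} & 0\\
0 & 0 & 0 & 0_{n-p-q}
\end{pmatrix}D^*,
\]
and read off the spectrum $\sigma(S_\mu)=\{1-\mu\sin^2\theta_k: k=1,\ldots,p\}\cup\{0\}$ (with the second set nonempty whenever $q>p$ or $p+q<n$). Convergence via Fact~\ref{t:limitp} then requires $|1-\mu\sin^2\theta_p|\le 1$ with the equality case $\mu\sin^2\theta_p=2$ excluded (eigenvalue $-1$ on the unit circle), plus the verification that $1$ is semisimple; exactly as in \eqref{n1}--\eqref{n2}, the latter reduces to $\ker S^2=\ker S^4$, which is trivial. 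This gives the range $\mu\in[0,2/\sin^2\theta_p)$, with the convention $1/0=\infty$ covering the degenerate case $U\subset V$ (where $S_\mu=P_U$).

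Next I would identify the limit. Using the block form, $\Fix S_\mu$ reduces to $\ker(-\mu S^2)$ in the first coordinate block with the other coordinates zero, which by \eqref{e:UV} is $U\cap V$. A parallel computation, made on $S_\mu^*=(1-\mu)P_U+\mu P_VP_U$, yields $\Fix S_\mu^*=U\cap V$ as well. Corollary~\ref{c:non}(i) then forces $S_\mu^\infty=P_{U\cap V}$. For the rate, writing $a=1-\mu\sin^2\theta_{s+1}$ and $b=1-\mu\sin^2\theta_p$, the key algebraic identity
\[
a^2-b^2=\mu(\sin^2\theta_p-\sin^2\theta_{s+1})\bigl[2-\mu(\sin^2\theta_{s+1}+\sin^2\theta_p)\bigr]
\]
(the analogue of \eqref{e:m1}) shows that the subdominant eigenvalue switches from $a$ to $|b|$ precisely at $\mu=2/(\sin^2\theta_{s+1}+\sin^2\theta_p)$, delivering assertions~\textbf{(i)} and~\textbf{(ii)}. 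Semisimpleness of every subdominant eigenvalue is then checked by the same diagonal-block argument used in Subsubcases~b1--b3 of Theorem~\ref{t:relaxedI}: since the offending block is a real diagonal matrix, $\ker(S_\mu-\lambda\Id)=\ker(S_\mu-\lambda\Id)^2$ follows from Fact~\ref{f:ss}, and Theorem~\ref{t:linearII} identifies the rate as optimal.

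For the case $p+q\geq n$, I would embed $U,V$ into $U'=U\times\{0_k\}$, $V'=V\times\{0_k\}$ in $\RR^{n+k}$ for $k$ large, apply the result to $S_\mu'$, and use $S_\mu'=\mathrm{diag}(S_\mu,0_k)$ together with $\sigma(S_\mu)=\sigma(S_\mu')$ (after removing the extra $0$'s) to transfer both the convergence conclusion and the rate, exactly as at the end of the proof of Theorem~\ref{t:relaxedI}. Finally, the ``consequently'' statement is pure scalar arithmetic: on $(0,2/(\sin^2\theta_{s+1}+\sin^2\theta_p)]$ the inequality $1-\mu\sin^2\theta_{s+1}<\cos^2\theta_{s+1}$ simplifies to $\mu>1$, while on the complementary interval $\mu\sin^2\theta_p-1<\cos^2\theta_{s+1}$ simplifies to $\mu<(2-\sin^2\theta_{s+1})/\sin^2\theta_p$; the minimizer $\mu^\star=2/(\sin^2\theta_{s+1}+\sin^2\theta_p)$ is the crossover point and the corresponding minimum value $(\sin^2\theta_p-\sin^2\theta_{s+1})/(\sin^2\theta_{s+1}+\sin^2\theta_p)$ is obtained by substitution. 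The only step that requires genuine care—rather than transcription from the proof of Theorem~\ref{t:relaxedI}—is confirming that the eigenvalue $0$ (which is now always present) is itself semisimple, so that it never spoils the rate; this follows because the lower-right diagonal blocks of $S_\mu$ are already identically zero, making $0$ semisimple by inspection via Fact~\ref{f:ss} whenever $I_p-\mu S^2$ is invertible (which is automatic on the convergence interval).
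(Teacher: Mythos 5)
Your proposal is correct and takes essentially the same approach as the paper's own proof: both deduce the block form of $S_\mu$ from Proposition~\ref{p:main}, read off the spectrum, identify the limit $P_{U\cap V}$ via $\Fix S_\mu=\Fix S_\mu^*=U\cap V$ and Corollary~\ref{c:non}, use the same algebraic identity comparing $|1-\mu\sin^2\theta_{s+1}|$ with $|1-\mu\sin^2\theta_p|$, check semisimpleness of the subdominant eigenvalues through Fact~\ref{f:ss}, invoke Theorem~\ref{t:linearII}, and finish the case $p+q\ge n$ by the padding embedding from Theorem~\ref{t:relaxedI}. Your extra remark about the eigenvalue $0$ is a reasonable caution but is not actually needed: $0$ is a subdominant eigenvalue only in the degenerate case $\gamma(S_\mu)=0$ (i.e., $\theta_{s+1}=\theta_p$ and $\mu\sin^2\theta_p=1$), and in every other regime $\gamma(S_\mu)>0$ so the semisimpleness of $0$ is irrelevant for the rate.
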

\begin{proof} We separate the proof into two main cases as below:

{\bf Case 1:} $p +q < n$ with $1\le p=\dim U\le q=\dim V\le n-1$. It follows from \eqref{e:proj} and \eqref{e:PUV} that there is some orthogonal  matrix $D\in \RR^{n\times n}$ such that
\begin{eqnarray}\label{e:S3}
S_\mu=D\begin{pmatrix}
(1-\mu)I_p+\mu C^2 &\mu CS&0\\
0&0_p &0\\
0 &0 & 0_{n-2p}
\end{pmatrix} D^*=D\begin{pmatrix}
I_p-\mu S^2 &\mu CS&0\\
0&0_p &0\\
0 &0 & 0_{n-2p}
\end{pmatrix} D^*.
\end{eqnarray}
Hence we have
\begin{equation}\label{e:S20}
\sigma(S_\mu)=\{1-\mu\sin^2\theta_k|\; k=1,\ldots,p\}\cup\{0\}.
\end{equation} Suppose that $S_\mu$ is convergent, we get from Fact~\ref{t:limitp} that
\begin{eqnarray}\label{inq3}
-1< 1-\mu\sin^2\theta_p\quad \mbox{and}\quad 1-\mu\sin^2\theta_{s+1}\le 1.
\end{eqnarray}
Since $\theta_{s+1}=\theta_F\neq 0$ by Proposition~\ref{p:PAF}, the latter gives us that   $\mu\in[0,\frac{2}{\sin^2 \theta_p})$. Conversely, suppose that $\mu\in[0,\frac{2}{\sin^2 \theta_p})$, we have
\begin{equation}\label{mu1}
1=1-\mu\sin^2\theta_1=\cdots=1-\mu\sin^2\theta_s\ge 1-\mu\sin^2\theta_{s+1}\ge\cdots\ge 1-\mu\sin^2\theta_p>-1.
\end{equation}
If $\mu=0$ then $S_\mu=P_U$ is always convergent. If $\mu>0$ and $s=0$, it is clear that $1\notin\sigma(S_\mu)$ by \eqref{e:S20}. Thanks to Fact~\ref{t:limitp}, we have $S_\mu$ is convergent. If $\mu>0$ and $s>0$, it is similar to the corresponding part of Theorem~\ref{t:relaxedI} that $1\in \sigma(S_\mu)$ is semisimple. Combining \eqref{mu1} with
Fact~\ref{t:limitp} gives us that $S_\mu$ is convergent. Thus $S_\mu$ is convergent if and only if $\mu\in[0,\frac{2}{\sin^2\theta_p})$.

To verify {\bf(i)} and {\bf(ii)}, assume further that $\mu\in (0,\frac{2}{\sin^2\theta_p})$. Let us claim  that $S_\mu$ is convergent to $P_{U\cap V}$. Via the explicit form of $S_\mu$ in \eqref{e:S3}, we can easily check that
\[
\Fix S_\mu=\ker(S_\mu-\Id)=D((\RR^s)^*\times 0_{1\times(n-s)})^*=\ker(S^*_\mu-\Id)=\Fix S^*_\mu.
\]
Note also from  \eqref{e:UV} that
\[
U\cap V=\Fix P_{U\cap V}=D((\RR^s)^*\times 0_{1\times(n-s)})^*.
\]
It follows that $\Fix S_\mu=\Fix S^*_\mu=U\cap V$. Thanks to Corollary~\ref{c:non}, we have $S_\mu$ is convergent to $P_{U\cap V}$.

Next we justify the qualitative characterizations in {\bf(i)} and {\bf(ii)}. Observe from \eqref{e:S20} and  \eqref{mu1} that
\begin{equation}\label{mu2}
\gg(S_\mu)=\max\{|1-\mu\sin^2\theta_{s+1}|,|1-\mu\sin^2\theta_p|\}.
\end{equation}
Note also that
\begin{equation}\label{mu3}
(1-\mu\sin^2\theta_{s+1})^2-(1-\mu\sin^2\theta_p)^2=\mu(\sin^2\theta_p-\sin^2\theta_{s+1})[2-\mu(\sin^2\theta_{s+1}+\sin^2\theta_{p})].
\end{equation}

{\em Subcase a:} $\sin\theta_p=\sin\theta_{s+1}$, i.e., $\theta_{s+1}=\theta_{s+2}=\cdots=\theta_p$. Hence we have $\sigma(S_\mu)=\{1-\mu\sin^2\theta_s,1-\mu\sin^2\theta_{s+1},0\}$ and $\gg(S_\mu)=|1-\mu\sin^2\theta_{s+1}|$. Moreover, it is easy to check that $c_\mu:=1-\mu\sin^2\theta_{s+1}$ is semisimple by showing that $\ker (S_\mu-c_\mu\Id)=\ker (S_\mu-c_\mu\Id)^2$.

{\em Subcase b:} $\sin\theta_p\neq \sin\theta_{s+1}$, i.e., $\sin\theta_p> \sin\theta_{s+1}$. We continue the proof by taking into account three different cases as follows.

{\em Subsubcase b1:} $\mu\in (0,\frac{2}{\sin^2 \theta_{s+1}+\sin^2 \theta_p})$. Then we have from \eqref{mu3} that $|1-\mu\sin^2\theta_{s+1}|>|1-\mu\sin^2\theta_p|$, which gives us that $\gg(S_\mu)=|1-\mu\sin^2\theta_{s+1}|$ by \eqref{mu2}. Moreover, note that
\begin{equation}\label{inq4}
c_\mu=1-\mu\sin^2\theta_{s+1}> 1-\frac{2}{\sin^2\theta_{s+1}+\sin^2\theta_p}\sin^2\theta_{s+1}=\frac{\sin^2\theta_p-\sin^2\theta_{s+1}}{\sin^2\theta_{s+1}+\sin^2\theta_p}>0.
\end{equation}
Thanks to the structure of $S_\mu$ in \eqref{e:S3}, we may check that $c_\mu$ is semisimple. Thus $c_\mu=\gg(S_\mu)$ is the optimal convergence rate of $S_\mu$ by Theorem~\ref{t:linearII}.

{\em Subsubcase b2:} $\mu=\frac{2}{\sin^2\theta_{s+1}+\sin^2\theta_p}$. Thus
\begin{equation}\label{inq5}
\gg(S_\mu)=|1-\mu\sin^2\theta_{s+1}|=|1-\mu\sin^2\theta_p|=\frac{\sin^2\theta_p-\sin^2\theta_{s+1}}{\sin^2\theta_{s+1}+\sin^2\theta_p}> 0.
\end{equation}
We can check that $c_\mu=1-\mu\sin^2\theta_{s+1}$ and $d_\mu:=1-\mu\sin^2\theta_p$ are semisimple in this case via Fact~\ref{f:ss}. This together with Theorem~\ref{t:linearII} tells us that $\gg(S_\mu)= 1-\mu\sin^2\theta_{s+1}=\frac{\sin^2\theta_p-\sin^2\theta_{s+1}}{\sin^2\theta_{s+1}+\sin^2\theta_p}$ is the optimal linear rate of $S_\mu$.

{\em Subsubcase b3:} $\mu\in(\frac{2}{\sin^2\theta_{s+1}+\sin^2\theta_p},\frac{2}{\sin^2\theta_p})$.  It follows from \eqref{mu3} that $|1-\mu\sin^2\theta_{s+1}|<|1-\mu\sin^2\theta_p|$, which yields $\gg(S_\mu)=|1-\mu\sin^2\theta_{p}|$ by \eqref{mu2}. Moreover,
observe that
\begin{equation}\label{inq6}
\mu\sin^2\theta_{p}-1> \frac{2}{\sin^2\theta_{s+1}+\sin^2\theta_p}\sin^2\theta_{p}-1=\frac{\sin^2\theta_{p}-\sin^2\theta_{s+1}}{\sin^2\theta_{s+1}+\sin^2\theta_p}>0.
\end{equation}
We also have  $d_\mu=1-\mu\sin^2\theta_p$ is semisimple via Fact~\ref{f:ss}. Thanks to Theorem~\ref{t:linearII}, $\gg(S_\mu)=\mu\sin^2\theta_{p}-1$ is the optimal convergence rate of $S_\mu$.

Combining Subsubcase b1 and Subsubcase b2 gives us {\bf (i)}. Furthermore, Subsubcase b3 exactly verifies {\bf (ii)}.  The last part of the theorem is indeed  a direct consequence of {\bf(i)} and {\bf(ii)}. The proof of the theorem for Case 1 is complete.

{\bf Case 2:} $p+q\ge n$. Then we find some $k\in \NN$ such that $n^\prime:=n+k> p+q$ and define $U^\prime:=U\times\{0_k\}\subset\RR^{n^\prime}$,  $V^\prime:=V\times\{0_k\}\subset\RR^{n^\prime}$, and $S^\prime_\mu=(1-\mu)P_{U^\prime}+\mu P_{U^\prime}P_{V^\prime}$. It is clear that $1\le p=\dim U^\prime\le \dim V^\prime=q$ and $p+q< n^\prime$. Moreover, we also have
\[
S^\prime_\mu=\begin{pmatrix}S_{\mu}&0\\ 0 & 0_k\end{pmatrix},
\]
which shows that $S^\prime_\mu$ is convergent if and only if $S_\mu$ is convergent.  The rest of the proof is quite similar to the corresponding one in Theorem~\ref{t:relaxedI}.
\end{proof}

\begin{remark} It is clear that the optimal rate $\frac{\sin^2\theta_p-\sin^2\theta_{s+1}}{\sin^2\theta_{s+1}+\sin^2\theta_p}$ of $S_\mu$ is  smaller than the one $\frac{1-\sin^2\theta_{s+1}}{1+\sin^2\theta_{s+1}}$ of $T_\mu$ in Theorem~\ref{t:relaxedI}. Note further from the above theorem  that $S_2=P_UR_V$ with $R_V:=2P_V-\Id$, which is known  as the {\em reflection-projection} method \cite{BK,Cegielski} is convergent to $P_{U\cap V}$ if and only if $2<\frac{2}{\sin^2\theta_p}$, i.e., $\theta_p<\frac{\pi}{2}$. When this case is fulfill, the optimal rate of the reflection-projection method is $\max\{|1-2\sin^2\theta_{s+1}|,|1-2\sin^2\theta_p|\}$ by \eqref{mu2}. Besides the definition of $\theta_{s+1},\theta_p$ in Definition~\ref{d:PA} and Definition~\ref{d:F}, we may also obtain $\theta_{s+1},\theta_p$ in following formulas
\begin{equation}\label{e:th}
\cos^2\theta_{s+1}=\|P_UP_V-P_{U\cap V}\|\qquad \mbox{and}\qquad \sin^2\theta_p=\|P_U-P_UP_V\|^2=\|P_U-P_UP_VP_U\|
\end{equation}
 from \eqref{e:proj}, \eqref{e:PUV}, and \eqref{e:UV}.
\end{remark}

\begin{remark}[finite termination] From Theorem~\ref{t:relaxedI}, observe that the map $T_\mu$ has the convergence rate $0$, i.e., it will always terminate after finite powers if and only if $\theta_{s+1}=\frac{\pi}{2}$ and $\mu=1$. Similarly, we get from Theorem~\ref{t:relaxedII} that  $S_\mu$  has the convergence rate $0$ if and only if  $\mu= \frac{2}{\sin^2 \theta_{s+1}+\sin^2\theta_p}$ and $\theta_{s+1}=\theta_p$. The latter condition is clearly satisfied when $\dim(U\cap V)=p-1$ and $\mu=\frac{1}{\sin^2\theta_{s+1}}$; e.g., $U$ and $V$ are two different lines passing the origin in $\RR^2$, or  $U$ is a  line in $\RR^3$ and $V$ is a hyperplane in $\RR^3$ with $U\not\subset V$, or $U$ and $V$ are two different hyperplanes in $\RR^3$, etc.

\end{remark}

\subsection{Convergence rate of the generalized Douglas-Rachford method}
Convergence rate of many specific matrices relating to Douglas-Rachford operator
\begin{equation}\label{e:dr}
R:=P_{U}P_V+P_{U^\perp}P_{V^\perp}=\frac{R_UR_V+\Id}{2}=\frac{R_{U^\perp}R_{V^\perp}+\Id}{2}
\end{equation}
 has been discussed in  \cite{DZ}. One of the particular cases there is the so-called {\em generalized Douglas-Rachford operator} $R_\mu$ defined by
\begin{equation*}
R_\mu :=(1-\mu)\Id +\mu R.
\end{equation*}

Convergence rate of this mapping has been obtained in \cite{DZ} under an additional condition $U\cap V=\{0\}$. In the following result we  give a complete characterization of the convergence of this map and  also show that the condition $U\cap V=\{0\}$ can be relaxed.

\begin{theorem}\label{t:DR} The map $R_\mu$ is convergent if and only if $\mu\in [0,2)$. Moreover, the following assertions hold:

{\bf (i)} $R_\mu$ is normal.

{\bf (ii)} If $\mu\in (0,2)$ then $R_\mu$ is convergent to $P_{\Fix R}=P_{(U\cap V)\oplus(U^\perp\cap V^\perp)}$ with the optimal convergence rate $\gg(R_\mu)=\sqrt{\mu(2-\mu)\cos^2\theta_{s+1}+(1-\mu)^2}$, where $s:=\dim (U\cap V)$.
\end{theorem}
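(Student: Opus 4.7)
The plan is to realize $R$ in block form via Proposition~\ref{p:main} and read off the spectrum directly; normality of $R$ is the structural property that makes the optimal rate accessible. I first assume $p+q<n$, the remaining case being handled by the embedding trick used in the proofs of Theorem~\ref{t:relaxedI} and Theorem~\ref{t:relaxedII}. Adding the two identities in~\eqref{e:PUV} gives
\begin{equation*}
R \;=\; D\begin{pmatrix} C^{2} & CS & 0 & 0\\ -CS & C^{2} & 0 & 0\\ 0 & 0 & 0_{q-p} & 0\\ 0 & 0 & 0 & I_{n-p-q}\end{pmatrix}D^{*},
\end{equation*}
whose $2p\times 2p$ leading block decouples into $p$ independent $2\times 2$ blocks, the $k$-th being $\cos\theta_k$ times the planar rotation by $\theta_k$. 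Each such block is normal (the diagonal matrices $C$ and $S$ commute), so $R$ is normal and hence so is $R_\mu=(1-\mu)\Id+\mu R$; this settles part~(i).

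From the block decomposition, $\sigma(R)=\{\cos\theta_k e^{\pm i\theta_k}\mid k=1,\ldots,p\}\cup\{0,1\}$, where $0$ occurs iff $q>p$ and $1$ carries total multiplicity $2s+(n-p-q)=\dim[(U\cap V)\oplus(U^\perp\cap V^\perp)]$, accounting for both the $s$ degenerate pairs at $\theta_k=0$ and the trailing identity block. Consequently $\sigma(R_\mu)$ consists of $1$, of $1-\mu$ (if $q>p$), and of $1-\mu+\mu\cos\theta_k e^{\pm i\theta_k}$ for $k=s+1,\ldots,p$, with
\begin{equation*}
\bigl|1-\mu+\mu\cos\theta_k e^{\pm i\theta_k}\bigr|^{2}\;=\;(1-\mu)^{2}+\mu(2-\mu)\cos^{2}\theta_k.
\end{equation*}
For $\mu\in(0,2)$ one has $\mu(2-\mu)>0$ and $\cos^{2}\theta_k\le\cos^{2}\theta_{s+1}<1$ for $k\ge s+1$ (by Proposition~\ref{p:PAF}), so every non-$1$ eigenvalue lies strictly inside the unit disk; the case $\mu=0$ reduces to $R_0=\Id$. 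Conversely, for $\mu\ge 2$ the eigenvalues $1-\mu$ and $1-\mu+\mu\cos\theta_k e^{\pm i\theta_k}$ lie on or outside the unit circle but are not equal to $1$ (e.g.\ at $\mu=2$ the rotation eigenvalues satisfy $|\,{-1}+2\cos\theta_k e^{\pm i\theta_k}|=1$ yet differ from $1$ since $\theta_k>0$), and for $\mu<0$ one has $|1-\mu|>1$. Combining this with the semisimplicity of every eigenvalue (immediate from normality) and Fact~\ref{t:limitp} yields the convergence characterization $\mu\in[0,2)$.

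To finish~(ii), note $\Fix R_\mu=\Fix R=(U\cap V)\oplus(U^\perp\cap V^\perp)$: the inclusion ``$\supseteq$'' follows from $R|_{U\cap V}=\Id=R|_{U^\perp\cap V^\perp}$, and the reverse from the eigenvalue multiplicity count above. Normality plus Corollary~\ref{c:non}(ii) then identifies the limit as $R_\mu^{\infty}=P_{\Fix R}$. Because every eigenvalue of $R_\mu$ is semisimple, Theorem~\ref{t:linearII} guarantees that $\gg(R_\mu)$ is the optimal rate; and since $\cos\theta_{s+1}=\max\{\cos\theta_k:k\ge s+1\}$ while $\mu(2-\mu)\cos^{2}\theta_{s+1}\ge 0$ dominates the eigenvalue $1-\mu$ on $(0,2)$, this subdominant modulus is exactly $\sqrt{(1-\mu)^{2}+\mu(2-\mu)\cos^{2}\theta_{s+1}}$. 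The one delicate bookkeeping step will be tracking how the degenerate rotation blocks at $\theta_k=0$ ($k\le s$) combine with the trailing $I_{n-p-q}$ to reconstitute the full eigenspace of $\lambda=1$; everything else is a direct calculation on decoupled $2\times 2$ blocks.
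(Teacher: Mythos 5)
Your approach mirrors the paper's almost exactly: block-diagonalize via Proposition~\ref{p:main}, read off $\sigma(R_\mu)$ from the $2\times 2$ rotation blocks, establish normality, and combine it with Corollary~\ref{c:non} and Theorem~\ref{t:linearII} (the paper uses Theorem~\ref{normalI}, which packages the same facts). Your spectral identification, modulus calculation, and dimension count for $\Fix R$ all check out.

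The one step that is not airtight is the ``only if'' direction for $\mu<0$. You write that $|1-\mu|>1$, but $1-\mu$ belongs to $\sigma(R_\mu)$ only when $q>p$ or when some $\theta_k=\tfrac{\pi}{2}$; in the remaining case ($q=p$, $\theta_p<\tfrac{\pi}{2}$) that number is not an eigenvalue, so the observation by itself does not yield $\rho(R_\mu)>1$. What you actually need is the rotation eigenvalue at $k=s+1$, whose squared modulus is
\begin{equation*}
(1-\mu)^2+\mu(2-\mu)\cos^2\theta_{s+1}=1+(-\mu)(2-\mu)\sin^2\theta_{s+1}>1\quad\text{for }\mu<0,
\end{equation*}
since $\sin\theta_{s+1}>0$ by Proposition~\ref{p:PAF}. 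The paper sidesteps the case split entirely by rearranging the convergence inequality $\mu(2-\mu)\cos^2\theta_{s+1}+(1-\mu)^2\le 1$ as $\mu(2-\mu)(1-\cos^2\theta_{s+1})\ge 0$, which forces $\mu\in[0,2]$ in one stroke and leaves only $\mu=2$ to exclude separately (as you do). With that small repair your proof is correct and takes the same route as the paper's.
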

\begin{proof} As proceeded in the proof of Theorem~\ref{t:relaxedI} and Theorem~\ref{t:relaxedII}, we consider two major cases as below.

{\bf Case 1.} $p+q< n$. By using the expressions of \eqref{e:PUV}, we easily establish that
\begin{eqnarray}\label{69b}
\begin{array}{ll}
R_\mu&\disp=D\begin{pmatrix}
C^2+(1-\mu)S^2& \mu CS & 0  &0 \\
-\mu CS & C^2+(1-\mu)S^2 & 0 &0\\
0 & 0 & (1-\mu)I_{q-p} &0\\
0 & 0 & 0& I_{n-p-q}
\end{pmatrix}D^*\\
&\disp=D\begin{pmatrix}
I_p-\mu S^2& \mu CS & 0  &0 \\
-\mu CS & I_p-\mu S^2 & 0 &0\\
0 & 0 & (1-\mu)I_{q-p} &0\\
0 & 0 & 0& I_{n-p-q}
\end{pmatrix}D^*;
\end{array}
\end{eqnarray}
 see also a similar form on \cite[page~14]{DZ}. It is easy to check that $R^*_\mu R_\mu=R_\mu R^*_\mu$, i.e., $R_\mu$ is normal. Thus {\bf(i)} is satisfied. We may get from the above format and {the block determinant formula, c.f., \cite[page 475]{Meyer}} that
\begin{align*}
\sigma(R_\mu)=\left\{\begin{array}{ll}\{\cos^2\theta_k+(1-\mu)\sin^2\theta_k\pm \mathrm{i}\mu\cos\theta_k\sin\theta_k|\; k=1,\ldots,p\}\cup\{1\}\quad&\mbox{if}\quad q=p, \\
\{\cos^2\theta_k+(1-\mu)\sin^2\theta_k\pm \mathrm{i}\mu\cos\theta_k\sin\theta_k|\; k=1,\ldots,p\}\cup\{1\}\cup\{1-\mu\}\quad&\mbox{if}\quad q>p,\end{array}\right.
\end{align*}
where $\mathrm{i}:=\sqrt{-1}$. For any $k=1, \ldots,p$, we have
\[\begin{array}{ll}
\big|1-\mu\sin^2\theta_k\pm \mathrm{i}\mu\cos\theta_k\sin\theta_k\big|&=\sqrt{(1-\mu\sin^2\theta_k)^2+\big[\mu\cos\theta_k\sin\theta_k\big]^2}\\
&=\sqrt{\big[\mu\cos^2\theta_k+(1-\mu)]^2+\mu^2\cos^2\theta_k(1-\cos^2\theta_k)}\\
&=\sqrt{\mu(2-\mu)\cos^2\theta_k+(1-\mu)^2}.
\end{array}
\]
Suppose further that $R_\mu$ is convergent. Then we get from Fact~\ref{t:limitp} that
\[
\mu(2-\mu)\cos^2\theta_{s+1}+(1-\mu)^2\le 1,
\]
which yields $\mu(2-\mu)(1-\cos^2\theta_{s+1})\ge 0$ and thus $\mu\in[0,2]$, since $\cos^2\theta_{s+1}<1$. Next let us consider three particular subcases of $\mu$.

{\em Subcase a.} $\mu=2$. Then all eigenvalues of $R_\mu$ have magnitude $1$. By Fact~\ref{t:limitp}, we have
\begin{equation}\label{lam}
1-\mu\sin^2\theta_k\pm \mathrm{i}\mu\cos\theta_k\sin\theta_k=1\quad \mbox{for all}\quad k=1,\ldots,p,
\end{equation}
which implies in turn that  $\sin\theta_{s+1}\cos\theta_{s+1}=0$ and thus $\theta_{s+1}=\frac{\pi}{2}$, since $\sin\theta_{s+1}>0$ by Proposition~\ref{p:PAF}. It follows  that
\[
1-\mu\sin^2\theta_{s+1}\pm \mathrm{i}\mu\cos\theta_{s+1}\sin\theta_{s+1}=-1,
\]
 which contradicts \eqref{lam}. Hence when $\mu=2$, $R_\mu$ is not convergent.

{\em Subcase b:} $\mu=0$. It is obvious that $R_\mu=\Id$ is convergent to $\Id$ with rate $0$.

{\em Subcase c:} $0<\mu<2$. By Propodition~\ref{p:PAF} we have
\begin{eqnarray}\label{ee1}
\begin{array}{ll}
1&\disp=\sqrt{\mu(2-\mu)\cos^2\theta_1+(1-\mu)^2}=\cdots =\sqrt{\mu(2-\mu)\cos^2\theta_s+(1-\mu)^2}\\
&\disp>\sqrt{\mu(2-\mu)\cos^2\theta_{s+1}+(1-\mu)^2}\ge\sqrt{\mu(2-\mu)\cos^2\theta_{s+2}+(1-\lm)^2}\\
&\disp\ge\cdots \ge\sqrt{\mu(2-\mu)\cos^2\theta_{p}+(1-\mu)^2}\ge  |1-\mu|.\label{70c}
\end{array}
\end{eqnarray}
Since $R_\mu$ is normal, it follows from  Fact~\ref{t:limitp} and Corollary~\ref{c:non} that $R_\mu$ is convergent. Hence we have $R_\mu$ is convergent if and only if $\mu\in [0,2)$.

It remains to verify {\bf(ii)} in this case. Suppose that $\mu\in (0,2)$, we get from the normality of $R_\mu$ and  Theorem~\ref{normalI} that $\gg(R_\mu)=\sqrt{\mu(2-\mu)\cos^2\theta_{s+1}+(1-\mu)^2}$ (by \eqref{ee1}) is the optimal convergence rate of $R_\mu$ and that $R_\mu$ is convergent to $P_{\Fix R_\mu}=P_{\Fix R}$. Moreover, we have $\Fix R=(U\cap V)\oplus(U^\perp\cap V^\perp)$ by \cite[Proposition~3.6]{BCNPW}. This ensures {\bf(ii)} and thus completes the proof of the theorem for Case 1.\\[-2ex]

{\bf Case 2:} $p+q\ge n$. Similarly to the proof of Theorem~\ref{t:relaxedI} and Theorem~\ref{t:relaxedII}, we find $k>0$ such that $n+k:=n^\prime>p+q$. Define further that $U^\prime:=U\times \{0_k\}\subset \RR^{n^\prime}$, $V^\prime:=V \times \{0_k\} \subset \RR^{n^\prime}$, and $R^\prime_\mu=(1-\mu)\Id+\mu[P_{U^\prime}P_{V^\prime}+P_{(U^\prime)^\perp}P_{(V^\prime)^\perp}]$. It is easy to verify that
\begin{equation}\label{72}
R^\prime_\mu=\begin{pmatrix} R_\mu &0 \\ 0 &I_k\end{pmatrix}.
\end{equation}
Note from Case 1 that $R^\prime_\mu$ is normal, and so is $R_\mu$.  Morever, we get from \eqref{72} that $R_\mu$ is convergent if and only if $R^\prime_\mu$ is convergent with the same rate. The analysis of the convergence of $R^\prime_\mu$ in Case 1 justifies all the statement of the theorem in this case. The proof is complete.
\end{proof}

\begin{remark} (1). Unlike the relaxed alternating projection methods studied in Theorem~\ref{t:relaxedI} and ~\ref{t:relaxedII}, convergence rate of the (over and under) relaxation of the Douglas-Rachford algorithm is always bigger than the original one due to
\[
\gg(R_1)=\cos\theta_{s+1}\le \sqrt{\mu(2-\mu)\cos^2\theta_{s+1}+(1-\mu)^2}=\gg(R_\mu)\quad \mbox{for all}\quad \mu\in[0,2).
\]
Moreover, it is worth mentioning here that Theorem~\ref{t:DR} also tells us that $R_2=R_UR_V$, which is known as \emph{reflection-reflection} method will never be convergent in the case of two nontrivial subspaces with $1\le \dim U,\dim V\le n-1$.

(2). For the convergence rate of the Douglas-Rachford method on a general Hilbert space, see \cite{BCNPW}.
\end{remark}

\section{A nonlinear approach to the alternating projection method}

Throughout this section, we also suppose that $U$ and $V$ are two subspaces of $\RR^n$ with $1\le p=\dim U\le \dim V=q\le n-1$. From Theorem~\ref{t:relaxedII},  we know that the map $S_\mu$ \eqref{e:PRAP} obtains its smallest rate $\frac{\sin^2\theta_p-\sin^2\theta_{s+1}}{\sin^2\theta_{s+1}+\sin^2\theta_p}$ at $\mu=\frac{2}{\sin^2\theta_{s+1}+\sin^2\theta_p}$. {This rate is smaller than the optimal rate of $T_\mu$ and $T$.} However, it is not trivial to determine $\theta_{s+1}$ and $\theta_p$ to construct $\mu=\frac{2}{\sin^2\theta_{s+1}+\sin^2\theta_p}$ for $S_\mu$ especially with big dimensions of $U$ and $V$; see Definition~\ref{d:PA}, Definition~\ref{d:F}, and  \eqref{e:th}. In this section we introduce a simple {\em nonlinear} mapping, by using the idea of a line search \cite{BDHP,GK,GPR} for the map $S_\mu$, so that the iterative sequence given by this nonlinear
mapping is linearly convergent to the projection on $U\cap V$ with the same optimal rate mentioned above.  One may think of this mapping as the partial relaxed alternating projection with an adaptive parameter $\mu(x)$ depending on each iteration period. This is a technique employed for other iterative methods, see, e.g., \cite{BC2011,BCK,Cegielski,CS,censor07,elfving}.

\begin{definition} Define the map $B_T$ with $T=P_UP_V$ by
\begin{equation}\label{acce}
B_T(x):=P_U((1-\mu_x)x+\mu_x P_Vx)=(1-\mu_x)P_Ux+\mu_x P_UP_Vx,
\end{equation}
where
\begin{eqnarray}\label{mux}
\mu_x:=\left\{\begin{array}{ll}\disp\frac{\la P_Ux-P_UP_Vx,x\ra}{\|P_Ux-P_UP_Vx\|^2}\quad&\mbox{if}\quad P_Ux-P_UP_Vx\neq 0\\
\disp 1\quad & \mbox{if}\quad P_Ux-P_UP_Vx=0.
 \end{array}\right.
\end{eqnarray}
\end{definition}
\begin{remark} In \cite{BCK,BDHP,GK}, an {\em accelerated} mapping of $T$ is introduced by using the line-search \cite{GPR} as
 \begin{equation}\label{AT}
A_T(x):=(1-\lm_x)x+\lm_x P_UP_Vx,
\end{equation}
where
\begin{eqnarray}\label{lmx}
\lm_x=\left\{\begin{array}{ll}\disp\frac{\la x-P_UP_Vx,x\ra}{\|x-P_UP_Vx\|^2}\quad&\mbox{if}\quad x-P_UP_Vx\neq 0\\
\disp 1\quad & \mbox{if}\quad x-P_UP_Vx=0.
 \end{array}\right.
\end{eqnarray}
It is worth noting that  $\mu_x=\lm_x$ and $B_Tx=A_Tx$ when $x\in U$.
\end{remark}

Set $M:=U\cap V$. The proof of the following convenient fact can be found in \cite[Lemma~9.2]{Deutsch}
\begin{equation}\label{pemu}
P_UP_M=P_MP_U=P_VP_M=P_MP_V=P_M.
\end{equation}
The main result in this section is Theorem~\ref{t:new}, before proving it we provide two useful lemmas.

\begin{lemma}For each $x\in\RR^n$ and $y\in U\cap V$ we have
\begin{align}\label{e:inf}
\min_{\mu\in \RR}\|(1-\mu)P_Ux+\mu P_UP_Vx-y\|= \|B_Tx-y\|.
\end{align}
Moreover, $\mu_x$ given in \eqref{mux} is the unique minimizer when $P_Ux-P_UP_Vx\neq 0$.
\end{lemma}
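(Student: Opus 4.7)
The plan is to recognize the minimization as a one-dimensional quadratic problem in $\mu$, write down the unique unconstrained minimizer, and then verify that this minimizer coincides with $\mu_x$ from \eqref{mux} using only the facts that $P_Ux-P_UP_Vx\in U$ and that $y\in U\cap V$.

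Set $a:=P_Ux$, $b:=P_UP_Vx$, and $v:=a-b=P_U(x-P_Vx)\in U$, so that the objective function becomes
\begin{equation*}
\varphi(\mu):=\|(1-\mu)a+\mu b-y\|^2=\|(a-y)-\mu v\|^2.
\end{equation*}
If $v=0$, then $\varphi(\mu)$ is constant and $B_Tx=a=P_Ux$ by the second branch of \eqref{mux}, so \eqref{e:inf} holds trivially. Otherwise, $\varphi$ is a strictly convex quadratic in $\mu$, and its unique minimizer is
\begin{equation*}
\mu^*=\frac{\la a-y,v\ra}{\|v\|^2}.
\end{equation*}

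It remains to check that $\mu^*=\mu_x$, i.e., that $\la a-y,v\ra=\la x,v\ra$. Two observations drive this. First, since $v\in U$ and $P_U$ is self-adjoint with $P_Uv=v$, we have $\la a,v\ra=\la P_Ux,v\ra=\la x,P_Uv\ra=\la x,v\ra$. Second, since $y\in U\cap V$, we have $P_Uy=y$ and $P_{V^\perp}y=y-P_Vy=0$, so
\begin{equation*}
\la y,v\ra=\la y,P_UP_{V^\perp}x\ra=\la P_{V^\perp}P_Uy,x\ra=\la P_{V^\perp}y,x\ra=0.
\end{equation*}
Combining these two observations yields $\la a-y,v\ra=\la a,v\ra=\la x,v\ra$, which is exactly the numerator of $\mu_x$ in \eqref{mux}. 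Hence $\mu^*=\mu_x$ and $(1-\mu_x)P_Ux+\mu_x P_UP_Vx=B_Tx$, establishing both the value of the minimum in \eqref{e:inf} and the uniqueness of $\mu_x$ as the minimizer when $v\neq 0$.

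There is no serious obstacle here; the only subtle point is recognizing that passing the $y$-term into the quadratic produces the extra inner product $\la y,v\ra$, which must be shown to vanish. This vanishing is precisely where the hypothesis $y\in U\cap V$ (rather than merely $y\in U$) is used, via the identity $P_UP_{V^\perp}y=0$.
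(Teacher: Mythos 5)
Your proof is correct and follows essentially the same route as the paper: recognize the problem as a quadratic in $\mu$, compute the unique minimizer, and reduce its numerator to $\la x, v\ra$ by showing $\la P_Ux,v\ra=\la x,v\ra$ (since $v\in U$) and $\la y,v\ra=0$ (since $y\in U\cap V$). The only cosmetic difference is that you rewrite the objective directly as $\|(a-y)-\mu v\|^2$, whereas the paper expands it via the convex-combination identity $\|(1-\mu)u+\mu w\|^2=(1-\mu)\|u\|^2+\mu\|w\|^2-\mu(1-\mu)\|u-w\|^2$ before extracting the minimizer; both lead to the same formula.
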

\begin{proof}
{When $P_Ux=P_UP_Vx$, inequality \eqref{e:inf} is trivial. Now suppose that $P_Ux\neq P_UP_Vx$ and note that
\begin{align}
\|(1-\mu)P_Ux+\mu P_UP_Vx-y\|^2&=\|(1-\mu)(P_Ux-y)+\mu (P_UP_Vx-y)\|^2\nonumber\\
&=(1-\mu)\|P_Ux-y\|^2+\mu\|P_UP_Vx-y\|^2-\mu(1-\mu)\|P_Ux-P_UP_Vx\|^2.\label{e:00}
\end{align}
This is a quadratic in $\mu$ and thus attains its minimum at the following unique minimizer
\begin{eqnarray}\label{e:000}
\begin{array}{ll}
\mu&\disp=\frac{1}{2}\frac{\|P_Ux-y\|^2-\|P_UP_Vx-y\|^2+\|P_Ux-P_UP_Vx\|^2}{\|P_Ux-P_UP_Vx\|^2}\\
&\disp=\frac{\la P_Ux-y,P_Ux-P_UP_Vx\ra}{\|P_Ux-P_UP_Vx\|^2}.\end{array}
\end{eqnarray}
Since $y\in U\cap V$, we derive that
\begin{equation}\label{e:01}
\la y, P_Ux-P_UP_Vx\ra=\la y,P_UP_{V^\perp}x\ra=\la P_Uy,P_{V^\perp}x\ra=\la y, P_{V^\perp}x\ra=0.
\end{equation}
Moreover, note that $\la P_Ux,P_Ux-P_UP_Vx\ra=\la x, P_Ux-P_UP_Vx\ra$. This together with \eqref{e:00}, \eqref{e:000} and \eqref{e:01} tells us that the left-hand side of \eqref{e:inf} attains its minimum at $\mu_x$ in \eqref{mux}. We verify \eqref{e:inf} and complete the proof of the lemma.}
\end{proof}

\begin{lemma} For any $\mu\in \RR$ and $x\in U$ we have
\begin{equation}\label{e:Sn}
S_\mu x-P_{U\cap V}x=((1-\mu)P_U+\mu P_UP_VP_U-P_{U\cap V})(x-P_{U\cap V}x),
\end{equation}
where $S_\mu=(1-\mu) P_U+\mu P_UP_V$ defined in {\rm Theorem~\ref{t:relaxedII}}. Moreover, when $\mu=\frac{2}{\sin^2\theta_{s+1}+\sin^2\theta_p}$ with $s=\dim(U\cap V)$ and $\theta_{s+1},\theta_p$ found in {\rm Definition~\ref{d:PA}}, we have
\begin{equation}\label{e:norm}
\|(1-\mu)P_U+\mu P_UP_VP_U-P_{U\cap V}\|=\frac{\sin^2\theta_p-\sin^2\theta_{s+1}}{\sin^2\theta_{s+1}+\sin^2\theta_p}.
\end{equation}
\end{lemma}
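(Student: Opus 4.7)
The plan is to split the argument into the two assertions. For the identity \eqref{e:Sn}, the key observation is that when $x\in U$ we have $P_Ux=x$, hence $P_Vx=P_VP_Ux$ and therefore
\[
S_\mu x=(1-\mu)P_Ux+\mu P_UP_Vx=(1-\mu)P_Ux+\mu P_UP_VP_Ux=\big[(1-\mu)P_U+\mu P_UP_VP_U\big]x.
\]
Write $A_\mu:=(1-\mu)P_U+\mu P_UP_VP_U$. Using \eqref{pemu}, since $P_{U\cap V}x\in U\cap V$, we have $P_UP_{U\cap V}x=P_VP_{U\cap V}x=P_{U\cap V}x$, which gives $A_\mu P_{U\cap V}x=P_{U\cap V}x$. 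Because $P_{U\cap V}$ is a projection, $P_{U\cap V}(x-P_{U\cap V}x)=0$. Subtracting these two facts yields
\[
S_\mu x-P_{U\cap V}x = A_\mu x-A_\mu P_{U\cap V}x + P_{U\cap V}P_{U\cap V}x - P_{U\cap V}x = (A_\mu -P_{U\cap V})(x-P_{U\cap V}x),
\]
which is exactly \eqref{e:Sn}. This part is purely algebraic and poses no obstacle.

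For the norm computation \eqref{e:norm}, I will first treat the case $p+q<n$ by invoking Proposition~\ref{p:main}. From \eqref{e:proj} and \eqref{e:PUV} we get
\[
P_UP_VP_U=D\begin{pmatrix}C^2 & 0 & 0 & 0\\ 0 & 0_p & 0 & 0\\ 0 & 0 & 0_{q-p} & 0\\ 0 & 0 & 0 & 0_{n-p-q}\end{pmatrix}D^*,
\]
so that
\[
(1-\mu)P_U+\mu P_UP_VP_U = D\begin{pmatrix}I_p-\mu S^2 & 0\\ 0 & 0_{n-p}\end{pmatrix}D^*.
\]
Combining this with the formula \eqref{e:UV} for $P_{U\cap V}$ and using that $\theta_1=\cdots=\theta_s=0$, the top-left $s\times s$ identity block of $I_p-\mu S^2$ is cancelled. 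Hence $(1-\mu)P_U+\mu P_UP_VP_U-P_{U\cap V}$ is unitarily similar to a diagonal matrix whose nonzero entries are $1-\mu\sin^2\theta_k$ for $k=s+1,\ldots,p$. Its operator norm equals
\[
\max_{s+1\le k\le p}|1-\mu\sin^2\theta_k|.
\]
Plugging in $\mu=2/(\sin^2\theta_{s+1}+\sin^2\theta_p)$ gives $1-\mu\sin^2\theta_{s+1}=(\sin^2\theta_p-\sin^2\theta_{s+1})/(\sin^2\theta_{s+1}+\sin^2\theta_p)$ and $1-\mu\sin^2\theta_p=-(\sin^2\theta_p-\sin^2\theta_{s+1})/(\sin^2\theta_{s+1}+\sin^2\theta_p)$; monotonicity of $k\mapsto\sin^2\theta_k$ implies that for all intermediate indices $|1-\mu\sin^2\theta_k|$ is bounded by this common value, which yields \eqref{e:norm}.

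For the case $p+q\ge n$, I will apply the same embedding trick used in the proofs of Theorems~\ref{t:relaxedI}, \ref{t:relaxedII}, \ref{t:DR}: pick $k\in\NN$ with $n':=n+k>p+q$, set $U':=U\times\{0_k\}$, $V':=V\times\{0_k\}\subset\RR^{n'}$, and apply the computation above in $\RR^{n'}$. Since the operators involved have a block-diagonal structure of the form $\mathrm{diag}(M,0_k)$, the operator norm is preserved. The only mild obstacle is bookkeeping the block indices in Proposition~\ref{p:main}, and verifying that the maximum of $|1-\mu\sin^2\theta_k|$ over $k\in\{s+1,\dots,p\}$ is indeed attained symmetrically at the endpoints for the stated choice of $\mu$; both are straightforward.
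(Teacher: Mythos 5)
Your proof is correct and follows essentially the same route as the paper: the identity \eqref{e:Sn} is obtained by the same algebraic manipulation using \eqref{pemu} (the paper subtracts $P_Mx$ twice and adds $P_M^2x$, you phrase it as $A_\mu P_{U\cap V}x = P_{U\cap V}x$ and $P_{U\cap V}(x-P_{U\cap V}x)=0$, which is the same content), and the norm \eqref{e:norm} is computed from the same diagonalization via Proposition~\ref{p:main} with the same embedding trick for $p+q\ge n$. Your explicit remark that the intermediate values $|1-\mu\sin^2\theta_k|$ are sandwiched between the two endpoint values (which coincide in absolute value for the chosen $\mu$) is a slightly more careful justification of the final $\max$ than the paper gives, but otherwise the arguments match.
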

\begin{proof}  Set $M:=U\cap V$. For any $x\in U$ we get from \eqref{pemu} that
\begin{align*}
((1-\mu)P_U+\mu P_UP_VP_U-P_M)(x-P_Mx)&\disp=((1-\mu)P_U+\mu P_UP_VP_U)x-P_Mx-P_Mx+P_M^2x\\
&\disp=(1-\mu)P_Ux+\mu P_UP_Vx-P_Mx\\
&\disp=S_\mu x-P_Mx,
\end{align*}
which verifies \eqref{e:Sn}. To justify \eqref{e:norm}, without loss of generality, suppose that $p+q<n$ (otherwise, we follow the trick used in the proof of Case 2 of Theorem~\ref{t:relaxedII}).  It is easy to check from \eqref{e:proj} and \eqref{e:UV} that
\begin{align*}
(1-\mu)P_U+\mu P_UP_VP_U-P_M&\disp=D\begin{pmatrix}(1-\mu)I_p+\mu C^2-\begin{pmatrix}I_s &0\\ 0&0_{p-s}\end{pmatrix} &0 \\ 0 &0_{n-p}\end{pmatrix}D^*\\
&\disp=D\begin{pmatrix}0_s & & & 0\\ &1-\mu\sin^2\theta_{s+1} & &\\ &\qquad\qquad\qquad \ddots & &\\
&  &1-\mu\sin^2\theta_{p} &\\
0 &  & & 0_{n-p}\end{pmatrix}D^*
\end{align*}
for some orthogonal  matrix $D\in \RR^{n\times n}$. When $\mu=\frac{2}{\sin^2\theta_{s+1}+\sin^2\theta_p}$, we get that
\[\begin{array}{ll}
\|(1-\mu)P_U+\mu P_UP_VP_U-P_M\|&\disp=\max\big\{|1-\mu\sin^2\theta_{p}|,|1-\mu\sin^2\theta_{s+1}|\big\}\\
 &=\disp \frac{\sin^2\theta_p-\sin^2\theta_{s+1}}{\sin^2\theta_{s+1}+\sin^2\theta_p}.
\end{array}
\]
This ensures \eqref{e:norm} and completes the proof of the lemma.
\end{proof}

We are ready to establish the main result of this section as follows.

\begin{theorem}\label{t:new} For any $n\in \NN$ and $x\in \RR^n$, we have
\begin{equation}\label{e:cog}
\|B^{n+1}_T(x)-P_{U\cap V}x\|\le \left[\frac{\sin^2\theta_p-\sin^2\theta_{s+1}}{\sin^2\theta_p+\sin^2\theta_{s+1}}\right]^{n}\cos^2\theta_{s+1}\|x-P_{U\cap V}x\|,
\end{equation}
where $\theta_{s+1}$ and $\theta_{p}$ are the principal angles found in {\em Definition~\ref{d:PA}}. Hence the algorithm $B_T^n(x)\to P_{U\cap V}(x)$ is at least as fast as the partial relaxed projection \eqref{e:PRAP}. Furthermore, when $x\in U$ we obtain a shaper inequality
\begin{equation}\label{e:cog2}
\|B^{n+1}_T(x)-P_{U\cap V}x\|\le \Big[\frac{\sin^2\theta_p-\sin^2\theta_{s+1}}{\sin^2\theta_p+\sin^2\theta_{s+1}}\Big]^{n+1}\|x-P_{U\cap V}x\|.
\end{equation}
\end{theorem}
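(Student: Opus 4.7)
The plan is to exploit two structural features of the line-search mapping $B_T$: first, $B_T x \in U$ for every $x$, since it is a convex combination of $P_U x$ and $P_U P_V x$; second, the identities in \eqref{pemu} give $P_{U\cap V}(B_Tx) = (1-\mu_x)P_{U\cap V}P_U x + \mu_x P_{U\cap V}P_UP_V x = P_{U\cap V}x$, so the ``target'' $P_{U\cap V}x$ is invariant under $B_T$. Together these allow a clean iteration of a one-step contraction.

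First I would prove the sharper estimate \eqref{e:cog2} for $x \in U$ by induction on $n$. The base case is the one-step contraction on $U$: Lemma \eqref{e:inf} with $y = P_{U\cap V}x$ and the specific choice $\mu^\ast := \tfrac{2}{\sin^2\theta_{s+1}+\sin^2\theta_p}$ yields
$$\|B_T x - P_{U\cap V}x\| \le \|(1-\mu^\ast)P_U x + \mu^\ast P_U P_V x - P_{U\cap V}x\|.$$
Since $x\in U$, Lemma \eqref{e:Sn} rewrites the right-hand side as $\|((1-\mu^\ast)P_U + \mu^\ast P_U P_V P_U - P_{U\cap V})(x - P_{U\cap V}x)\|$, and Lemma \eqref{e:norm} bounds the operator norm of that matrix by $\kappa := \tfrac{\sin^2\theta_p - \sin^2\theta_{s+1}}{\sin^2\theta_{s+1} + \sin^2\theta_p}$. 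The inductive step is then immediate because $B_T^k x\in U$ and $P_{U\cap V}(B_T^k x) = P_{U\cap V}x$ for every $k$, so applying the base case to $B_T^{n}x$ and invoking the inductive hypothesis gives \eqref{e:cog2}.

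For the general estimate \eqref{e:cog}, I would reduce to the case already handled by running one iteration of $B_T$ first. Since $B_T x\in U$ and $P_{U\cap V}(B_T x) = P_{U\cap V}x$, an application of \eqref{e:cog2} to $B_T x$ (with exponent $n-1$ when $n\ge 1$) yields
$$\|B_T^{n+1}(x) - P_{U\cap V}x\| = \|B_T^{n}(B_T x) - P_{U\cap V}(B_T x)\| \le \kappa^n \|B_T x - P_{U\cap V}x\|.$$
It remains to establish the one-step bound $\|B_T x - P_{U\cap V}x\| \le \cos^2\theta_{s+1}\,\|x - P_{U\cap V}x\|$, which also covers the case $n=0$. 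For this I apply Lemma \eqref{e:inf} at $\mu=1$ to get $\|B_T x - P_{U\cap V}x\| \le \|P_U P_V x - P_{U\cap V}x\|$, then use the identity $(P_UP_V - P_{U\cap V})(x - P_{U\cap V}x) = P_U P_V x - P_{U\cap V}x$, which follows from \eqref{pemu}, and finally the norm formula $\|P_UP_V - P_{U\cap V}\| = \cos^2\theta_{s+1}$ recorded in \eqref{e:th}.

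I expect the main obstacle to be bookkeeping rather than conceptual difficulty: one has to carefully maintain the invariant $P_{U\cap V}(B_T^k x) = P_{U\cap V}x$ throughout the iteration and to handle the degenerate case $P_Ux = P_UP_Vx$, where $\mu_x := 1$ by convention, consistently so that Lemma \eqref{e:inf} still applies as stated. Once these are in place, the estimate assembles directly from the three preceding lemmas together with the norm identity in \eqref{e:th}.
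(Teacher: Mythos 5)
Your proof is correct and follows essentially the same route as the paper: both arguments iterate the one-step contraction obtained from Lemma~\eqref{e:inf}, Lemma~\eqref{e:Sn}, and the norm identity~\eqref{e:norm} along iterates lying in $U$, and both handle the first step from a general $x$ by taking $\mu=1$ in Lemma~\eqref{e:inf} together with the identity $\|P_UP_V-P_{U\cap V}\|=\cos^2\theta_{s+1}$ from~\eqref{e:th}. The only difference is presentational (you prove~\eqref{e:cog2} first by induction and then derive~\eqref{e:cog}; the paper writes out the chain for~\eqref{e:cog} directly and then notes~\eqref{e:cog2} follows by the same chain), so the two are substantively identical.
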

\begin{proof} For any $x\in \RR^n$, define $y= P_{U\cap V}x$ and $M=U\cap V$, note that $y=P_MB_Tx$. Fix $\mu:=\frac{2}{\sin^2\theta_{s+1}+\sin^2\theta_p}$ and $\gg:=\frac{\sin^2\theta_p-\sin^2\theta_{s+1}}{\sin^2\theta_{s+1}+\sin^2\theta_p}$. We obtain
\begin{align*}
\|B^{n+1}_T(x)-P_{U\cap V}x\|&\disp=\|B_T(B^{n}_T(x))-y\|\le \|S_\mu(B^{n}_T(x))-y\| \qquad (\mbox{by } \eqref{e:inf})\\
&\disp=\|((1-\mu)P_U+\mu P_UP_VP_U-P_M)(B_T^{n}(x)-y)\| \qquad \mbox{(by  \eqref{e:Sn} and $B_T^{n}x\in U$)}\\
&\le \disp\|(1-\mu)P_U+\mu P_UP_VP_U-P_M\|\cdot \|B_T^{n}(x)-y\|\\
&= \frac{\sin^2\theta_p-\sin^2\theta_{s+1}}{\sin^2\theta_{s+1}+\sin^2\theta_p} \|B_T^{n}(x)-y\|\qquad(\mbox{by } \eqref{e:norm})\\
&\le \cdots \\
&\disp\le \gg^{n}\|B_T(x)-y\|\le \gg^{n}\|S_1(x)-y\|\qquad (\mbox{by \eqref{e:inf} again)}\\
&=\disp \gg^{n}\|(P_UP_V-P_M)(x-P_Mx)\|\qquad (\mbox{by \eqref{pemu}})\\
&\le\disp \gg^{n}\|P_UP_V-P_M\|\cdot\|x-P_Mx\|=\gg^{n}\cos^2\theta_{s+1}\|x-P_Mx\| \qquad (\mbox{by \eqref{e:th})}.
\end{align*}
This verifies \eqref{e:cog}. To justify \eqref{e:cog2}, suppose further that $x\in U$,  note that
\[
B_Tx-P_Mx=[(1-\mu_x)P_U+\mu_xP_UP_VP_U](x-P_Mx).
\]
With $y=P_Mx$, following the above inequalities gives us that
\begin{align*}
\|B^{n+1}_T(x)-P_{U\cap V}x\|&\disp\le \gg^{n}\|B_T(x)-y\|\le\gg^{n}\|S_\mu x-y\|\qquad \mbox{(by \eqref{e:inf})}\\
&\disp= \gg^n\|((1-\mu)P_U+\mu P_UP_VP_U-P_M)(x-y)\|\qquad \mbox{(by \eqref{e:Sn})}\\
&\disp\le \gg^{n+1}\|x-y\|\qquad \mbox{(by \eqref{e:norm})},
\end{align*}
which ensures \eqref{e:cog2} and completes the proof of the theorem.
\end{proof}

\begin{remark} As discussed at the beginning of this section, though the map $S_\mu$ obtains the optimal convergence rate at $\mu_0:=\frac{2}{\sin^2\theta_{s+1}+\sin^2\theta_p}$, computing $\theta_{s+1}$ and $\theta_p$ may be expensive when the dimensions of $U$ and $V$ are big. Our nonlinear map $B_T$  indeed has a similar form to $S_\mu$ and also obtains the same rate with $S_{\mu_0}$, but it is easier to compute $\mu_x$ in \eqref{mux} and hence $B_T(x)$ for any $x\in\RR^n$.

\end{remark}

The following corollary suggests a convergence rate for the accelerated map $A_T$ in \eqref{AT}. This is actually a counterpart of \cite[Theorem~3.28]{BDHP} when $T=P_UP_V$, which is not \emph{selfadjoint} as required in \cite[Theorem~3.28]{BDHP}.

 \begin{corollary} Let $T=P_UP_V$. Then for any $n\in \NN$ and $x\in \RR^n$, we have
\begin{equation}\label{e:cog3}
\|A^n_T(Tx)-P_{U\cap V}x\|\le \left[\frac{\sin^2\theta_p-\sin^2\theta_{s+1}}{\sin^2\theta_p+\sin^2\theta_{s+1}}\right]^{n}\cos^2\theta_{s+1}\|x-P_{U\cap V}x\|,
\end{equation}
where  $A_T$ is defined in \eqref{AT} and where $\theta_{s+1}$ and $\theta_{p}$ are the principal angles found in {\em Definition~\ref{d:PA}}.
 \end{corollary}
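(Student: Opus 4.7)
The plan is to reduce the statement about $A_T$ to the estimate \eqref{e:cog2} for $B_T$ already proved in Theorem~\ref{t:new}, by exploiting the observation in the remark following \eqref{lmx} that $A_T$ and $B_T$ coincide on $U$. Since $T=P_UP_V$ maps $\RR^n$ into $U$, the first application of $A_T$ happens at a point of $U$, and $B_T(u)\in U$ for every $u\in\RR^n$, so both iterates stay in $U$ thereafter.

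First I would establish, by induction on $n$, that $A_T^n(Tx)=B_T^n(Tx)$ for all $n\in\NN$ and $x\in\RR^n$. The base case is immediate because $Tx\in U$, hence $A_T(Tx)=B_T(Tx)$ by the remark after \eqref{lmx}. For the inductive step one checks from \eqref{acce} that $B_T(\RR^n)\subset U$ (since $B_T(y)$ is a linear combination of $P_Uy$ and $P_UP_Vy$, both in $U$), so $B_T^n(Tx)\in U$, and another application of $A_T=B_T$ on $U$ closes the induction.

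Next I would apply \eqref{e:cog2} with $x$ replaced by $Tx\in U$ and $n+1$ replaced by $n$, yielding
\[
\|B_T^n(Tx)-P_{U\cap V}(Tx)\|\le \Big[\tfrac{\sin^2\theta_p-\sin^2\theta_{s+1}}{\sin^2\theta_p+\sin^2\theta_{s+1}}\Big]^n\|Tx-P_{U\cap V}(Tx)\|,
\]
which is valid for all $n\ge 0$ (the $n=0$ case is trivial). Using \eqref{pemu}, I would simplify $P_{U\cap V}(Tx)=P_{U\cap V}P_UP_Vx=P_{U\cap V}x$, so the left-hand side becomes $\|B_T^n(Tx)-P_{U\cap V}x\|$, which equals $\|A_T^n(Tx)-P_{U\cap V}x\|$ by the induction step.

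Finally, setting $M=U\cap V$ and using \eqref{pemu} I would write
\[
Tx-P_Mx=P_UP_Vx-P_UP_VP_Mx-P_Mx+P_M^2x=(P_UP_V-P_M)(x-P_Mx),
\]
so that \eqref{e:th} gives $\|Tx-P_Mx\|\le \|P_UP_V-P_M\|\cdot\|x-P_Mx\|=\cos^2\theta_{s+1}\|x-P_Mx\|$. Combining this with the previous display yields \eqref{e:cog3}. There is no real obstacle here; the only delicate bookkeeping is the off-by-one in the exponent of \eqref{e:cog2} and the verification that the iterates $B_T^n(Tx)$ remain in $U$ so that $A_T$ acts on them as $B_T$.
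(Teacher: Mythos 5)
Your proposal is correct and follows essentially the same route the paper takes: use the identity $A_T^n(Tx)=B_T^n(Tx)$ (which the paper states without proof and which you quite reasonably fill in by induction, using that $A_T=B_T$ on $U$ and $B_T$ maps into $U$), shift the index in \eqref{e:cog2} by one with $x$ replaced by $Tx$, simplify $P_{U\cap V}(Tx)=P_{U\cap V}x$ via \eqref{pemu}, and finally bound $\|Tx-P_{U\cap V}x\|=\|(T-P_{U\cap V})(x-P_{U\cap V}x)\|$ by $\cos^2\theta_{s+1}\|x-P_{U\cap V}x\|$ using \eqref{e:th}. Nothing is missing, and your explicit handling of the $n=0$ case and of the off-by-one substitution is a small but welcome piece of rigor beyond what the paper writes down.
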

 \begin{proof} For any $x\in \RR^n$, note that $Tx\in U$, $P_{U\cap V}Tx=TP_{U\cap V}x=P_{U\cap V}x$ by \eqref{pemu}, and that $A^n_T(Tx)=B^n_T(Tx)$. Thus we get from \eqref{e:cog2} that
\begin{align*}
\|A^n_T(Tx)-P_{U\cap V}x\|&\le \Big[\frac{\sin^2\theta_p-\sin^2\theta_{s+1}}{\sin^2\theta_p+\sin^2\theta_{s+1}}\Big]^{n}\|Tx-P_{U\cap V}x\|\\
&= \Big[\frac{\sin^2\theta_p-\sin^2\theta_{s+1}}{\sin^2\theta_p+\sin^2\theta_{s+1}}\Big]^{n}\|(T-P_{U\cap V})(x-P_{U\cap V}x)\|\\
&\le \Big[\frac{\sin^2\theta_p-\sin^2\theta_{s+1}}{\sin^2\theta_p+\sin^2\theta_{s+1}}\Big]^{n}\cos^2\theta_{s+1}\|x-P_{U\cap V}x\| \qquad (\mbox{by \eqref{e:th})},
\end{align*}
 which completes the proof of the corollary.
\end{proof}

\section{Numerical experiments}\label{s:numerical}

In this section, we compare several algorithms developed in previous sections with some classic methods for finding $P_{U\cap V}x_0$. Our test algorithms are the following
\begin{itemize}
\item $B_T$ defined in \eqref{acce};
\item $S_\mu$ with $\mu_1=\frac{2}{\sin^2\theta_F+\sin^2\theta_p}$ (the best parameter); $\mu_2=\frac{1}{\sin^2\theta_p}\in
[0,\frac{2}{\sin^2\theta_p})$; and $\mu_3=\tfrac{1}{2}+\frac{1}{\sin^2\theta_p}\in
[1,\frac{2}{\sin^2\theta_p})$ (see Theorem~\ref{t:relaxedII});
\item $T_{\mu}$ with $\mu_1=\tfrac{2}{1+\sin^2\theta_F}$ (the best parameter) and $\mu_2=1.5\in[0,2)$ (see Theorem~\ref{t:relaxedI});
\item the classic method of alternating projections (MAP);
\item the classic Douglas-Rachford method (DR).
\end{itemize}
There are (at least) two angles that might affect the convergence: $\theta_F$ (the Friedrichs angle, see~Proposition~\ref{p:PAF}) and $\theta_p$, thus we will use them to categorize the pairs of subspaces. Our numerical set up is as follows. We assume that $X=\RR^{100}$ and define $\mcX$ to be the set of all subspaces of $X$. First, we define our \emph{primary} categories based on the Friedrichs angle (in radians) as follows
\begin{subequations}
\begin{align}
W_1&:=\menge{(U,V)\in\mcX^2}{0<\theta_F<0.05};\\
W_2&:=\menge{(U,V)\in\mcX^2}{0.05\leq\theta_F<0.1};\\
W_3&:=\menge{(U,V)\in\mcX^2}{0.1\leq\theta_F<0.5};\\
\text{and}\quad
W_4&:=\menge{(U,V)\in\mcX^2}{0.5\leq\theta_F<1}.
\end{align}
\end{subequations}
Since we always have $\theta_F\leq\theta_p\leq\pi/2$, we define our \emph{secondary} categories as follows\footnote{We do not test the case $\theta_F=\theta_{s+1}=\theta_p$: in such case, it is proved in Theorems~\ref{t:relaxedII} and \ref{t:new} that $S_{\mu_1}$ and $B_T$ converge after a single step, i.e., they are the clear winners!}
\begin{equation}
Z_j:=\menge{(U,V)\in \mcX^2}
{\theta_p>\theta_F\quad\text{and}\quad
\tfrac{\theta_p-\theta_F}{\frac{\pi}{2}-\theta_F}\in[\tfrac{j-1}{5},\tfrac{j}{5})},\quad j=1,\ldots,5.
\end{equation}
Thus, there are 20 induced categories $W_i\cap Z_j$ for $i=1,\ldots,4$ and $j=1,\ldots, 5$. In each $W_i\cap Z_j$, we randomly generated 5 pairs of subspaces $U$ and $V$ of $X$ such that $\dim U\leq\dim V$ and $U\cap V\neq\{0\}$. So there are $100$ pairs of subspaces. For each pair of subspaces, we choose randomly 10 starting points, each with Euclidean norm 10. This results in a total of $1,000$ instances for each algorithm. Note that the sequences to monitor are as follows

\begin{center}
\begin{tabular}{c|c}
Algorithm & sequence $(z_{n})$ to monitor\\[+1mm]
\hline
& \\[-3mm]
$B_T$ (see \eqref{acce}) & $(B_T)^n(x_0)$\\[+1mm]
\hline
& \\[-3mm]
$S_\mu$ (see Theorem~\ref{t:relaxedII})& $(S_\mu)^n(x_0)$\\[+1mm]
\hline
& \\[-3mm]
$T_\mu$ (see Theorem~\ref{t:relaxedI})& $(T_\mu)^n(x_0)$\\[+1mm]
\hline
& \\[-3mm]
MAP & $(P_UP_V)^n(x_0)$\\[+1mm]
\hline
& \\[-3mm]
DR (see~\ref{e:dr}) & $P_V(\frac{\Id+R_UR_V}{2})^n(x_0)$
\end{tabular}
\end{center}
We terminate the algorithm when the current iterate of the monitored sequence $(z_n)_\nnn$ satisfies
\begin{equation}
d_{U\cap V}(z_{n})\leq 0.01
\end{equation}
for the first time or when the number of iterations reaches $100,000$ (i.e., problem unsolved). In applications, we in general would not have access to this information but here we use it to see the true performance of these algorithms.

In Figures~\ref{fig:1}, \ref{fig:2}, and \ref{fig:3}, the horizontal axis represents the Friedrichs angle between two subspaces; and the vertical axis represents the (median) number of iterations, more specifically, the median is computed over 10 instances of one pair of subspaces.

In Figure~\ref{fig:1}, we compare $B_T$, the ``best" versions $S_{\mu_1}$ and $T_{\mu_1}$, MAP, and DR. We see that $B_T$ is generally the fastest when $\theta_F>0.02$. This can be interpreted by the fact that $B_T$ optimizes its parameter $\mu_x$ at each iteration. While when $\theta_F\leq0.02$, DR seems to be the fastest, this phenomenon has been previously observed in \cite{BCNPW}. In Figure~\ref{fig:2}, we compare $S_{\mu_i}$, $i=1,2,3$. The results suggest that the ``best" version $S_{\mu_1}$ is somewhat faster than $S_{\mu_2}$ and $S_{\mu_3}$.  In Figure~\ref{fig:3}, we compare $T_{\mu_i}$, $i=1,2$. On the contrary, it is not clear that the ``best'' version $T_{\mu_1}$ is more favorable than $T_{\mu_2}$.

\begin{figure}[H]
\centering
\includegraphics{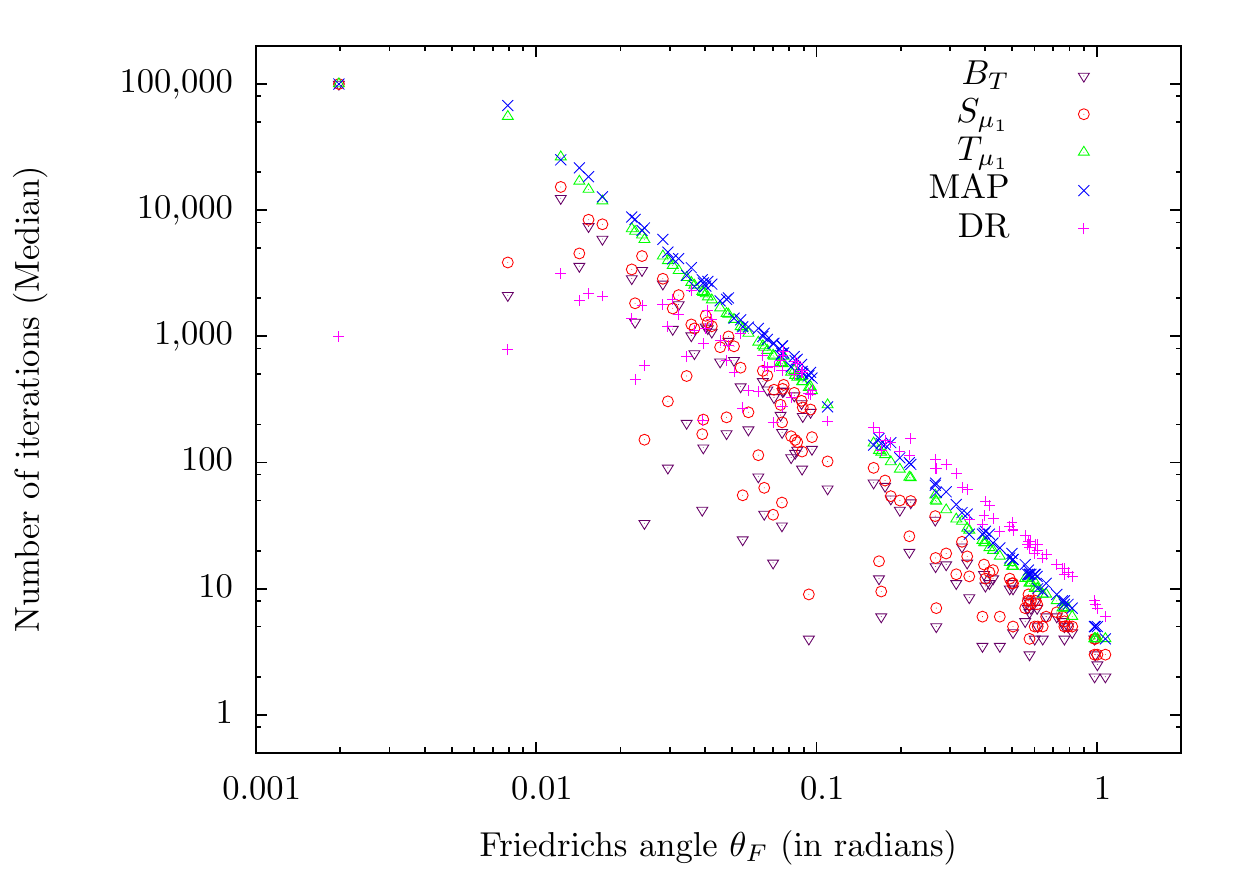}
\caption{$B_T$ is the fastest for large $\theta_F$, while DR is the fastest for small $\theta_F$.}
\label{fig:1}
\end{figure}

\begin{figure}[H]
\centering
\includegraphics{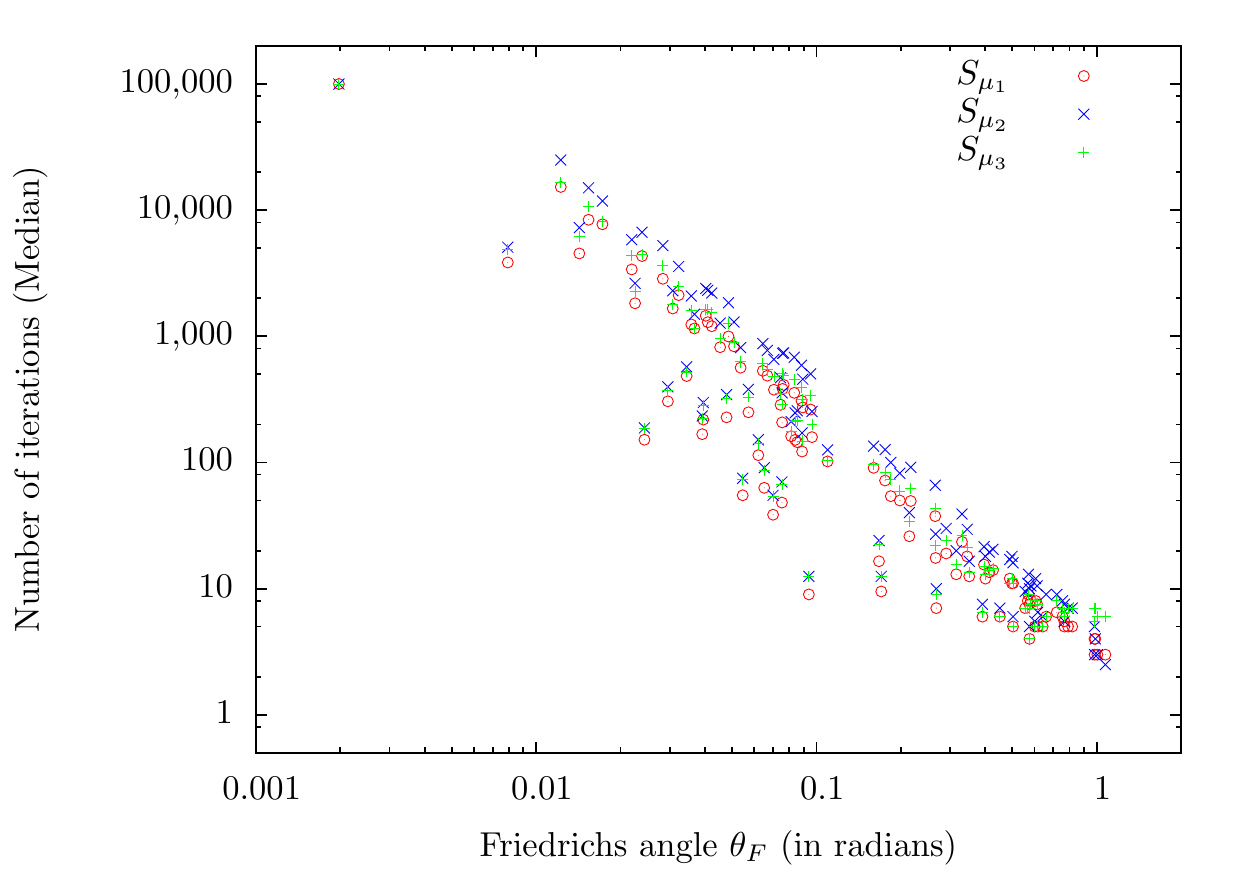}
\caption{$S_{\mu}$ with $\mu_1=\tfrac{2}{\sin^2\theta_F+\sin^2\theta_p}$ (``best"); $\mu_2=\tfrac{1}{\sin^2\theta_p}$; and $\mu_3=\tfrac{1}{2}+\tfrac{1}{\sin^2\theta_p}$.}\label{fig:2}
\end{figure}

\begin{figure}[H]
\centering
\includegraphics{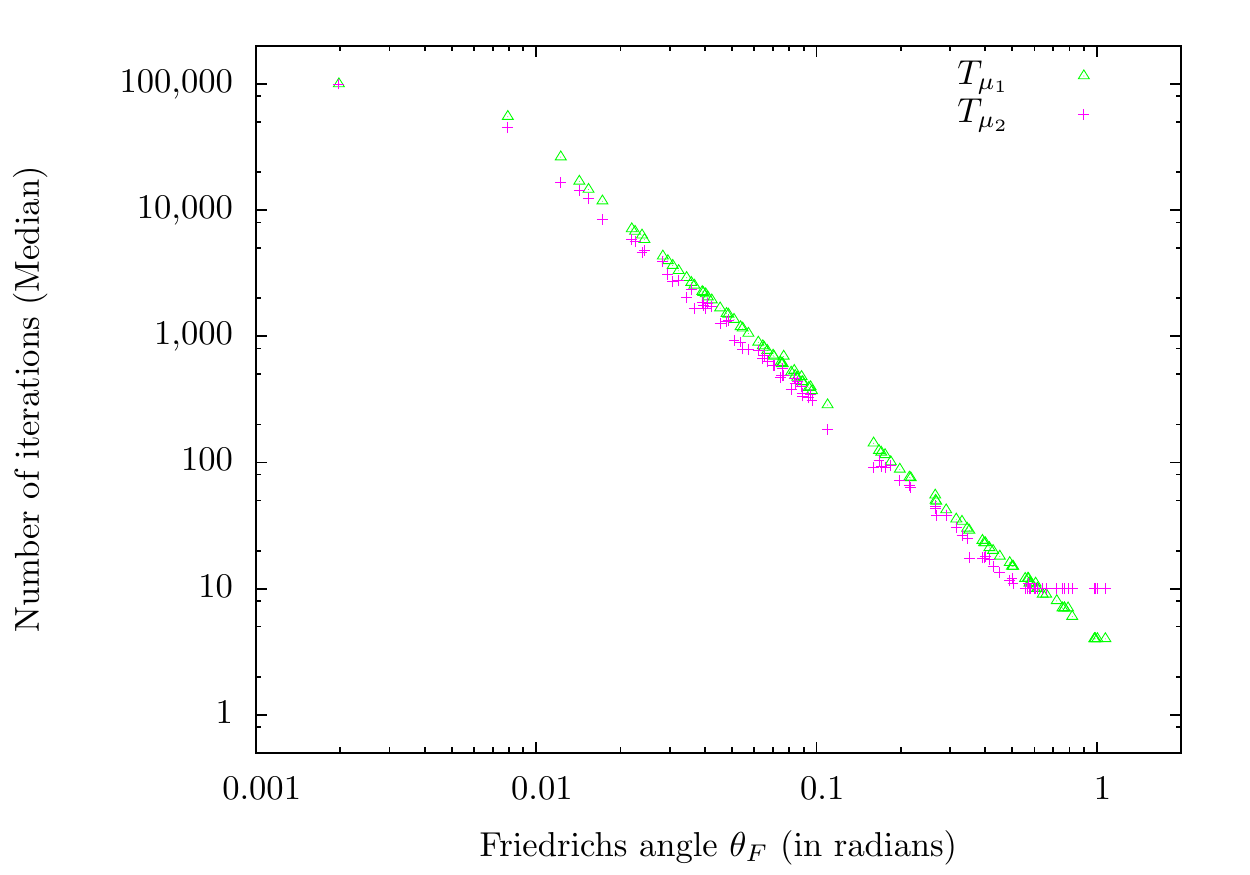}
\caption{$T_\mu$ with $\mu_1=\tfrac{1}{1+\sin^2\theta_F}$ (``best"); and $\mu_2=1.5$.}
\label{fig:3}
\end{figure}

Finally, in Table~\ref{tb:1}, for each primary category $W_i$, we record the median, the mean, and the standard deviation of the number of iterations required for the algorithms to terminate. The table clearly supports these observations above. In general, the results suggest that all algorithms are more preferable than MAP.

\begin{table}[H]
\centering
\begin{tabular}{|c|c|c|c|c|c|}
\hline
\multicolumn{2}{|c|}{} & & & & \\[-3mm]
\multicolumn{2}{|c|}{Primary category}&
$W_1$ &
$W_2$ &
$W_3$ &
$W_4$ \\[+1mm]
\hline
\multicolumn{2}{|c|}{} & & & & \\[-3mm]
\multicolumn{2}{|c|}{$\begin{aligned}
\text{Number of}\\ \text{instances}
\end{aligned}$}&
250& 250 & 250 & 250 \\[+1mm]
\hline & & & & & \\[-3mm]
\multirow{5}{*}{$B_T$}
& Median & 1139 & 169 & 13.5 & 5 \\[+1mm]
\cline{2-6}& & & & &\\[-3mm]
& Mean & 6002.5 & 206.7 & 22.9 & 5.1 \\[+1mm]
\cline{2-6}& & & & &\\[-3mm]
& Std & 19437.1 & 163.8 & 21.6 & 2 \\[+1mm]
\hline & & & & & \\[-3mm]
\multirow{4}{*}{$S_{\mu_1}$}
& Median & 1404 & 226.5 & 16 & 5 \\[+1mm]
\cline{2-6}& & & & &\\[-3mm]
& Mean & 6586.8 & 260.1 & 27.8 & 5.8 \\[+1mm]
\cline{2-6}& & & & &\\[-3mm]
& Std & 19396.5 & 195.7 & 26 & 2.1 \\[+1mm]
\hline & & & & & \\[-3mm]
\multirow{4}{*}{$S_{\mu_2}$}
& Median & 2318.5 & 359.5 & 24.5 & 7 \\[+1mm]
\cline{2-6}& & & & &\\[-3mm]
& Mean & 8096.6 & 417.5 & 43.5 & 7.6 \\[+1mm]
\cline{2-6}& & & & &\\[-3mm]
& Std & 19657.3 & 326.9 & 42.5 & 3.5 \\[+1mm]
\hline & & & & & \\[-3mm]
\multirow{4}{*}{$S_{\mu_3}$}
& Median & 1697.5 & 272 & 18.5 & 7 \\[+1mm]
\cline{2-6}& & & & &\\[-3mm]
& Mean & 6980.3 & 307.4 & 32 & 6.7 \\[+1mm]
\cline{2-6}& & & & &\\[-3mm]
& Std & 19426.9 & 221.5 & 30.3 & 1.5 \\[+1mm]
\hline & & & & & \\[-3mm]
\multirow{5}{*}{$T_{\mu_1}$}
& Median & 3636.5 & 611 & 42 & 9\\[+1mm]
\cline{2-6}& & & & &\\[-3mm]
& Mean & 11571 & 684.7 & 64.5 & 8.8 \\[+1mm]
\cline{2-6}& & & & &\\[-3mm]
& Std & 21298.1 & 265.9 & 59.3 & 3.3 \\[+1mm]
\hline & & & & & \\[-3mm]
\multirow{5}{*}{$T_{\mu_2}$}
& Median & 2704.5 & 481.5 & 32.5 & 10 \\[+1mm]
\cline{2-6}& & & & &\\[-3mm]
& Mean & 9788.2 & 528.2 & 48.8 & 10.2 \\[+1mm]
\cline{2-6}& & & & &\\[-3mm]
& Std & 20599 & 223.1 & 44.2 & 0.6 \\[+1mm]
\hline & & & & & \\[-3mm]
\multirow{5}{*}{MAP}
& Median & 4058.5 & 722.5 & 49 & 10 \\[+1mm]
\cline{2-6}& & & & &\\[-3mm]
& Mean & 12683 & 793.1 & 74 & 10.2 \\[+1mm]
\cline{2-6}& & & & &\\[-3mm]
& Std & 22531.1 & 334.7 & 66.4 & 4.1 \\[+1mm]
\hline & & & & & \\[-3mm]
\multirow{5}{*}{DR}
& Median & 1231 & 448.5 & 83.5 & 17.5 \\[+1mm]
\cline{2-6}& & & & &\\[-3mm]
& Mean & 1395.2 & 511.3 & 92 & 17.4 \\[+1mm]
\cline{2-6}& & & & &\\[-3mm]
& Std & 847.9 & 203.6 & 56.1 & 7.1 \\[+1mm]
\hline
\end{tabular}
\caption{Median, mean, and standard deviation of number of iterations.}
\label{tb:1}
\end{table}
The data and figures in this section were computed with the help of \texttt{Julia} (see \cite{Julia}) and \texttt{Gnuplot} (see
\cite{Gnuplot}).

\section{Conclusion}\label{s:conclusion}
This paper presents a constructive study on the optimal convergence linear rate of a matrix. We give a complete characterization when the  matrix has the optimal convergence rate in term of semi-simpleness of all the subdominant eigenvalues and the unit eigenvalue.
Combined with the principal angles between two subspaces,
this allows us to provide convergence analysis
for relaxed alternating projection, partial relaxed alternating projection and generalized Douglas-Rachford methods for two subspaces.
It turns out that the partial relaxed alternating projection method and its nonlinear version could obtain the smallest convergence rate among these ones, which are demonstrated by numerical performances. Our results not only recover
but also significantly extend currently known results in the literature.
In future research one may similarly investigate Jacobi, Gauss-Seidel, and  especially successive over-relaxation methods. Understanding further the partial relaxed alternating projection method for two sets is also an intriguing project.

\section*{Acknowledgments}

HHB was partially supported by a Discovery Grant and an Accelerator
Supplement of the Natural Sciences and Engineering
Research Council of Canada (NSERC) and by the Canada Research Chair Program. JYBC was partially supported by CNPq grants 303492/2013-9, 474160/2013-0
and 202677/2013-3 and by project CAPES-MES-CUBA 226/2012. TTAN was partially supported by a postdoctoral
fellowship of the Pacific Institute for the Mathematical Sciences
and by NSERC grants of HHB and XW.
HMP was partially supported by NSERC grants of HHB and XW. XW was partially supported by a Discovery Grant of NSERC.


\end{document}